\newtheorem{theorem}{Theorem}
\newtheorem{definition}{Definition}
\newtheorem{proposition}{Proposition}
\newtheorem{lemma}{Lemma}
\newtheorem{corollary}{Corollary}
\newtheorem{remark}{Remark}
\newcommand{\p}{\mathbb{P}}
\newcommand{\e}{\mathbb{E}}
\newcommand{\reals}{\mathbb{R}}
\newcommand{\ind}{\mathbf{1}}
\newcommand{\Wq}{\psi}
\newcommand{\Wqprime}{\psi^{\prime}}
\newcommand{\Zq}{\Psi}
\newcommand{\Zqprime}{\Psi^{\prime}}
\newcommand{\Hq}{\varphi}
\newcommand{\Hqprime}{\varphi^{\prime}}
\newcommand{\uGell}{\underline{G}^\ell}
\newcommand{\dd}{\mathrm{d}}
\begin{document}

\title[An optimization dichotomy]{An optimization dichotomy for capital injections and absolutely continuous dividend strategies}

\author[Renaud]{Jean-Fran\c{c}ois Renaud}
\address{D\'epartement de math\'ematiques, Universit\'e du Qu\'ebec \`a Montr\'eal (UQAM), 201 av.\ Pr\'esident-Kennedy, Montr\'eal (Qu\'ebec) H2X 3Y7, Canada}
\email{renaud.jf@uqam.ca}

\author[Roch]{Alexandre Roch}
\address{D\'epartement de finance, Universit\'e du Qu\'ebec \`a Montr\'eal (UQAM),  315  Sainte-Catherine East, Montr\'eal, (Qu\'ebec) H2X 3X2, Canada}
\email{roch.alexandre\_f@uqam.ca}

\author[Simard]{Clarence Simard}
\address{D\'epartement de math\'ematiques, Universit\'e du Qu\'ebec \`a Montr\'eal (UQAM), 201 av.\ Pr\'esident-Kennedy, Montr\'eal (Qu\'ebec) H2X 3Y7, Canada}
\email{simard.clarence@uqam.ca}


\date{\today}

\keywords{Stochastic control, dividends, capital injections, viscosity solutions, fluctuation identities.}

\begin{abstract}
We consider an optimal stochastic control problem in which a firm's cash/surplus process is controlled by dividend payments and capital injections. Stockholders aim to maximize their dividend stream minus the cost of injecting capital, if needed. We consider absolutely continuous dividend policies subject to a level-dependent upper bound on the dividend rate while we allow for general capital injections behavior. We prove that the optimal strategy can only be of two types: dividends are paid according to a \textit{mean-reverting} strategy with capital injections performed each time the cash process reaches zero; or, dividends are paid according to another \textit{mean-reverting} strategy and no injection of capital is ever made, until ruin is reached. We give a complete solution to this problem and characterize this dichotomy by comparing (the derivatives of) the value functions at zero of two sub-problems. The first sub-problem is concerned solely with the maximization of dividends, while the second sub-problem is the corresponding bail-out optimal dividend problem for which we provide also a complete solution.
\end{abstract}

\maketitle


\section{Introduction}

The problem of maximizing a stream of dividend payments under a ruin constraint goes back to Bruno de Finetti \cite{definetti_1957} in the context of insurance mathematics and Jeanblanc \& Shiryaev \cite{jeanblanc-shiryaev_1995} in a Brownian setup.  In this paper, we consider absolutely continuous dividend policies and we allow general capital injection controls. We give a complete solution to the problem of optimizing simultaneously dividends and capital injections when the value of the firm follows an arithmetic Brownian motion and the rate of dividends is bounded by an increasing concave function of the controlled process. The optimal pair depends on the parameters and the upper bound of the dividend rate according to the following dichotomy: dividends are paid using a bang-bang strategy while capital injections must be made each time the cash/surplus process reaches zero, so the firm is never ruined; or, dividends are paid using another bang-bang strategy and no injection of capital is ever made until ruin. No in-between strategy is optimal. If it is optimal to inject capital, then those injections will take the form of bail-out payments, i.e., they will occur only when the cash process hits zero, making the controlled process a reflected (at zero) process. On the other hand, the dividend component of the optimal strategy always consists in using the maximum dividend rate allowed when the controlled process is above an optimal threshold, giving rise to a refracted process in between capital injections (or up to the time of ruin). Our methods of proof rely on a combination of probabilistic and analytical techniques, namely fluctuation identities for diffusions and the theory of viscosity solutions for the associated Hamilton-Jacobi-Bellman (HJB) equations.

\subsection{Problem formulation}

On a filtered probability space $\left( \Omega, \mathcal{F}, \mathbb{F} =  \left\lbrace \mathcal{F}_t, t \geq 0\right\rbrace, \p \right)$, consider a Brownian motion with drift $X=\left\lbrace X_t , t \geq 0 \right\rbrace$ given by
\begin{equation}\label{eq:ABM}
 \dd X_t = \mu  \dd t + \sigma  \dd B_t ,
\end{equation}
where $\mu \in \reals$ and $\sigma > 0$, and where $B=\left\lbrace B_t , t \geq 0 \right\rbrace$ is a standard Brownian motion.

A control strategy is a pair $\pi = (\ell,G)$ of adapted processes $\ell=\left\lbrace \ell_t , t \geq 0 \right\rbrace$ and $G=\left\lbrace G_t , t \geq 0 \right\rbrace$, where $\ell_t$ is the dividend rate at time $t$ and $G_t$ is the amount of capital injections made up to time $t$. It is assumed that $G$ is nondecreasing and right-continuous with $G_{0-}=0$. The corresponding controlled process $X^\pi$ is given by
\begin{equation*}
 \dd X^{\pi}_t = \left( \mu - \ell_t \right)  \dd t + \sigma  \dd B_t +  \dd G_t ,
\end{equation*}
with $X^{\pi}_{0-} = X_0$. We also define the termination time by $T^\pi = \inf \{ t>0 : X^{\pi}_t < 0 \}$. In the spirit of \cite{RS2021} and \cite{LR2023}, we will impose a state-dependent bound on the dividend rate. More specifically, let $F \colon \reals_+ \to \reals_+$ be a nondecreasing, concave and continuously differentiable function such that $F(0) \geq 0$. A strategy $\pi = (\ell, G )$ is admissible if, for all $t \geq 0$, we have
\[
0 \leq \ell_t \leq F(X^{\pi}_t) .
\]
The set of admissible strategies will be denoted by $\Pi$. Also, we  use the Markovian notation $\e_x$ to denote the expectation given that $X^{\pi}_{0-} = X_0 = x$, for a fixed $x \geq 0$.

We aim at solving the following control problem. Let $q > 0$ be the discounting rate and $\beta>1$ the cost of injecting capital. For $x \geq 0$, we define the performance function of a strategy $\pi = (\ell, G) \in \Pi$ by
\[ J(x;\pi) = \e_x \left[\int_0^{T^\pi} \mathrm{e}^{-qt} (\ell_t  \dd  t - \beta  \dd G_t) \right].\]
We want to compute the value function defined by
\begin{equation}\label{eq:value_fct}
V(x) = \sup_{\pi \in \Pi} J(x;\pi), \quad x \geq 0,
\end{equation}
and find an optimal strategy, i.e., an admissible strategy for which the supremum is attained.

\subsection{Related literature, contributions and organization of the paper}

In the literature, optimal dividend strategies found are often of the bang-bang type. For instance, in the absolutely continuous problem, the optimal policy often consists in paying at the maximum (constant) rate allowed when the cash process reaches a certain threshold and nothing otherwise. Similarly, in the singular case, the optimal behavior yields a reflected (at an optimal barrier) controlled process and no dividend payments below \cite{AG2008,ASW2011,jeanblanc-shiryaev_1995}.  We follow a new trend of dividend maximization problems aiming for more realistic optimal dividend strategies by imposing a level-dependent bound on the dividend rate in the absolutely continuous case \cite{RS2021,LR2023}. For instance, in the linear-bound case of \cite{RS2021}, dividend rates are bounded by a \textit{proportion} of the current level of cash, allowing for the more natural situation in which the dividend rate increases as a function of the financial health of the firm.

Maximization of dividends with or without bail-out payments has been studied extensively in a variety of models. The expression ``bail-out payments" usually refers to forced injections of capital when the cash process reaches (or goes below) zero, making the controlled process a reflected process. Sethi and Taksar \cite{ST2002} consider a model in which dividends are paid as singular (reflective) controls and the firm is not allowed to go bankrupt and must issue external equity to keep the firm afloat.  Shreve et al. \cite{shreve-lehoczky-gaver_1984} present a similar mathematical model in the context of inventory control. The optimal dichotomy between injecting capital or letting the cash process get ruined has been first studied by L{\o}kka and Zervos \cite{LZ2008} (a result sometimes called the ``L{\o}kka-Zervos alternative"). Avram et al. \cite{AGR2019,AGLW2021} consider a similar dichotomy for Cram\'er-Lundberg models. This control problem in a singular setting  has also been studied in a L\'evy model with upward jumps by Avanzi et al. \cite{ASW2011}. Adding upward jumps to a Brownian motion does not significantly alter the difficulty of the problem, but it becomes more difficult when there are downward jumps \cite{AGLW2021}.

The theory of viscosity solutions for variational inequalities has been used extensively in the literature to study stochastic control problems. However, the absence of explicit expressions for the optimal performance function and the optimal strategy often forbids a thorough study of the optimal controlled process. In control problems related to the payment of dividends, a number of authors have tackled the question by proposing a large number of financial models and used viscosity theory to give a solution. Choulli et al. \cite{CTZ2003} describe the optimal dividend and business policy in a diffusion model using a surplus process in which dividends are paid in a reflective way and the optimal business risk can be described in terms of optimal thresholds.  D\'ecamps and Villeneuve \cite{DV2007} consider a dividend control problem of the singular type (reflective) with a growth option (modelled by a stopping time) giving rise to a dichotomy in which one possible solution involves exercising the growth option when the surplus reaches a certain threshold and the other solution is to entirely ignore the option. Similarly, Ly Vath et  al.  \cite{LVPV2008} present a dividend model with reversible technology investments. Chevalier et al. \cite{CLVR2020} present a dividend and capital injection model in which a grace period is allowed before the firm is declared bankrupt. Using viscosity comparison arguments, it is shown that the optimal capital injection strategy occurs as a reflection when the surplus hits zero after some time spent in the grace period and as an impulse when the grace period expires.  More recently, Albrecher et al. \cite{AAM2020,AAM2022} proposed a dividend model in which the dividend rate is not allowed to decrease over time (so-called ratcheting strategies) and use the theory of viscosity solutions to characterize the value function in terms of its HJB equation and provide numerical examples.

Commonly, the main difficulty in optimal dividend problems (for both the refracted and reflected types) is to show the existence of an optimal threshold, corresponding to an optimal barrier strategy, and characterize its value. Fluctuation theory, when applied to stochastic control problems, often leads to explicit (or semi-explicit) expressions for performance functions associated to barrier-type strategies. The optimal performance function among this subset of strategies is then computed by using probabilistic arguments together with fluctuation identities. If the subset of strategies contains the value function of the initial problem and if one can identify it, then the optimality can then be shown using a verification lemma, as long as it is smooth enough.

In this work, we solve two stochastic control problems: a (forced) bail-out optimal dividends problem and the main optimization problem in which capital injections are not forced. For the bail-out problem, we combine ideas from the fluctuation theory approach and the viscosity theory approach to fully characterize the solution. In particular, we are able to show the existence of the optimal barrier and characterize it. This allows us to consider a general set of admissible strategies while maintaining the advantage of HJB equations analysis (which involve derivatives of the value function) to describe the behavior of optimal strategies, even though smoothness of the value function is not known a priori.  Therefore, one of our main contributions is methodological. In the bail-out optimal dividends problem, a strategy is one-dimensional as it consists of a dividend payment rate, while capital injections behaviors is pre-determined. In the main optimization problem,  we consider two-dimensional strategies consisting of a capital injection policy and a dividend payment rate. In the literature, it is frequently argued that optimal injections should occur only at zero, involving an inspection of the associated variational inequality and/or using analytical properties of the performance function in combination with verification theorems. As a first step to solve our general control problem, we directly show that among the large class of admissible capital injection strategies (i.e., adapted and nondecreasing processes) the only potentially optimal behaviors are continuous and can only increase at zero. This is done by comparing solutions of stochastic differential equations (SDEs) controlled by a nondecreasing process. In other words, the optimal injections make the controlled process a reflected (at zero) process. As such, optimal injections can be seen as a last resort to keep the cash process from ruin, when it is optimal to do so. Then, the theory of viscosity solutions also allows us to derive a dichotomy that it is optimal to either always inject at zero or never inject, with the help of a comparison principle for solutions of the associated HJB equation. This new comparison result is based on either the initial values of the super and subsolutions or their initial derivatives. 

The rest of the paper is organized as follows. In the next section, we present the main results of this paper. In Section~\ref{sec:notation}, notation and preliminary results, needed in the rest of the paper, are defined and given. In Section~\ref{sec:Vd}, the solution to the problem without injections is presented, while in Section~\ref{sec:Vc}, we solve the bail-out dividend problem, i.e., the problem with forced injections. Finally, in Section~\ref{sec:dichotomy}, a solution to the main problem is provided.

\section{Main results}\label{sec:main-results}

In this section, we present the solutions of three optimization problems, of which two are new.

\subsection{Maximization of dividends without capital injections}

First, we consider the maximization of dividends when capital injections are not allowed. To wit, define
\[
\Pi_d = \left\lbrace \pi = (\ell, G) \in \Pi \colon G \equiv 0 \right\rbrace ,
\]
with corresponding  value function
\[
V_d(x) = \sup_{\pi \in \Pi_d}  J(x;\pi), \quad x \geq 0.
\]
When $F(x)=S$ for a given constant $S>0$,  we recover the classical problem studied by \cite{jeanblanc-shiryaev_1995}. If $F(x)=Kx$, for a given constant $K>0$, then we recover the control problem studied by \cite{RS2021}. Finally, the general case, i.e., when $F$ is a function with the abovementioned properties, the problem has been solved  recently by \cite{LR2023}. To state their result, let $b\geq0$ and define the mean-reverting dividend strategy at level $b$ by $\ell^b_t = F(X^b_t) \mathbf{1}_{\{X^b_t \geq b \}}$ with the associated controlled process
\[
 \dd X^b_t = \left(\mu - F(X^b_t) \mathbf{1}_{\{X^b_t \geq b \}} \right)  \dd t + \sigma  \dd  B_t,
\]
and with termination time $T^b = \inf \{ t>0 : X^b_t <0 \}$. Existence of a strong solution to this stochastic differential equation (SDE) is given in \cite{LS2016}.

\begin{theorem}[\cite{LR2023}] \label{thm:Vd}
There exists a constant $b_d \geq 0$ such that a mean-reverting dividend strategy at level $b_d$ is optimal for the control problem without capital injections. More precisely, for all $x \geq 0$, we have
\[
V_d(x) = \e_x \left[\int_0^{T^{b_d}} \mathrm{e}^{-qs} F(X^{b_d}_s) \mathbf{1}_{\{X^{b_d}_s \geq b_d \}}  \dd s \right].
\]
\end{theorem}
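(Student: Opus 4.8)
The plan is to establish this result by the standard fluctuation-theory-plus-verification route, since the theorem is quoted from \cite{LR2023} and the proof mirrors the approach that the present paper itself uses for the bail-out problem. First I would compute explicitly (or semi-explicitly) the performance function $V_b(x) = \e_x[\int_0^{T^b} \mathrm{e}^{-qs} F(X^b_s)\mathbf{1}_{\{X^b_s \geq b\}}\,\dd s]$ of a generic mean-reverting dividend strategy at level $b$. On $[0,b)$ the controlled process is just the Brownian motion with drift $X$ killed at $0$, so $V_b$ solves the linear ODE $\frac{1}{2}\sigma^2 u'' + \mu u' - q u = 0$ with boundary condition $V_b(0)=0$; this pins down $V_b$ on $[0,b]$ up to one constant, expressible via the increasing/decreasing $q$-scale functions of $X$. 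On $[b,\infty)$, $V_b$ solves $\frac{1}{2}\sigma^2 u'' + (\mu - F(x))u' - q u + F(x) = 0$, and one requires $V_b$ to be bounded (or of appropriate growth) at infinity, which singles out a particular solution. Matching the two pieces in a $C^1$ (indeed $C^2$) fashion at $x=b$ gives a closed system determining the free constant.

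Next I would optimize over the barrier level $b$. Differentiating $b \mapsto V_b(x)$ (or, more cleanly, examining the smooth-fit/super-contact condition $V_b''(b+) = V_b''(b-)$, equivalently a first-order condition on the matching constant) yields a candidate optimal level $b_d$; one must check that such a $b_d \in [0,\infty)$ exists, possibly with $b_d = 0$ as a boundary case when $F(0)$ is large enough relative to $\mu$ and $q$. Here the concavity and monotonicity of $F$, together with $F(0)\geq 0$, are used to guarantee that the relevant function of $b$ has the right sign behavior so that a maximizer exists and the associated $V_{b_d}$ is concave and increasing.

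Finally I would run a verification argument: show $V_{b_d}$ satisfies the HJB equation
\[
\max_{0 \leq \ell \leq F(x)} \left\{ \tfrac{1}{2}\sigma^2 V_{b_d}''(x) + (\mu - \ell)V_{b_d}'(x) + \ell - q V_{b_d}(x) \right\} = 0, \qquad x \geq 0,
\]
with $V_{b_d}(0)=0$. Because the maximand is linear in $\ell$, the maximizer is bang-bang: $\ell = F(x)$ when $V_{b_d}'(x) \leq 1$ and $\ell = 0$ when $V_{b_d}'(x) \geq 1$, so one must verify that $V_{b_d}' \leq 1$ on $[b_d,\infty)$ and $V_{b_d}' \geq 1$ on $[0,b_d]$ — this is precisely where the concavity of $V_{b_d}$ and the location of the threshold enter. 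Given sufficient smoothness ($C^2$, or $C^1$ with the variational inequality interpreted in the viscosity sense), an application of Itô's formula to $\mathrm{e}^{-qt}V_{b_d}(X^\pi_t)$ for an arbitrary admissible $\pi \in \Pi_d$, together with an optional stopping / localization argument to handle the stochastic integral and the termination time $T^\pi$, shows $J(x;\pi) \leq V_{b_d}(x)$, with equality for $\pi = (\ell^{b_d}, 0)$.

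The main obstacle is twofold: establishing existence (and the correct characterization) of $b_d$ in full generality for an arbitrary nondecreasing concave $C^1$ function $F$ — the matching constant and the first-order condition are only implicit, so one needs a monotonicity/intermediate-value argument exploiting the structural assumptions on $F$ — and confirming the sign conditions $V_{b_d}' \gtrless 1$ that make the bang-bang control the HJB maximizer. Since the statement is attributed to \cite{LR2023}, in the present paper it would suffice to cite that reference and recall only the features of $V_d$ and $b_d$ (monotonicity, concavity, the value of $V_d'(0)$) that are needed downstream for the dichotomy.
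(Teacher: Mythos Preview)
Your proposal is correct and matches the paper's treatment: the paper does not prove Theorem~\ref{thm:Vd} but cites \cite{LR2023} and, in Section~\ref{sec:Vd}, summarizes exactly the ingredients you describe---the explicit form of $J_d(\cdot;b)$ on $[0,b]$ and $[b,\infty)$ (Theorem~\ref{prop: Jd_continuous}), the $C^1$ smooth-fit at $b$, and the characterization of $b_d$ via the $C^2$/supercontact condition $J_d'(b_d;b_d)=1$ (Proposition~\ref{prop.bd}). Your closing remark that it suffices here to cite the reference and record the properties of $V_d$ needed downstream is precisely what the paper does.
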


The optimal barrier level $b_d$ is a solution of Equation~\eqref{eq.bd} in Proposition~\ref{prop.bd}. Also, an analytical expression for $V_d$ is available. Of course, we have $V_d(0)=0$. More details in Section~\ref{sec:Vd}.

\begin{remark}
The term "\textit{mean-reverting strategy}" originates from \cite{AW2012}. Since then, it has been used also by the follow-up literature, even when it is not standard \textit{mean-reversion}. We also adopt this terminology.
\end{remark}

\subsection{Maximization of dividends with forced capital injections}

Second, we consider the maximization of dividends with forced capital injections, i.e., preventing the controlled process from going below zero. Specifically, define
\[
\Pi_c = \left\lbrace \pi \in \Pi \colon X^\pi_t \geq 0 \mbox{ for all } t\geq 0 \right\rbrace
\]
with corresponding value function
\[
V_c(x) = \sup_{\pi \in \Pi_c} J(x;\pi), \quad x \geq 0.
\]
Note that, for each $\pi \in \Pi_c$, we have $T^\pi=\infty$.

In order to present our solution to this optimization problem, we introduce controlled processes reflected at zero. Fix an admissible dividend rate $\ell$. For $t \geq 0$, define $\uGell_t = - \min \{ X^{(\ell,0)}_s \wedge 0: 0\leq s \leq t \}$.
In this case, $\pi = (\ell, \uGell) \in \Pi_c$ and the corresponding controlled process is nonnegative at all times, i.e., the process given by
\[
 \dd X^\pi_t = (\mu - \ell_t) \dd t + \sigma  \dd B_t +  \dd  \uGell_t ,
\]
with $X^\pi_0 \geq 0$, is such that $X^\pi_t \geq 0$ for all $t \geq 0$ and
\[
\uGell_t = \int_0^t \mathbf{1}_{\{X^{\pi}_s = 0\}}  \dd \uGell_s .
\]
In other words, the process $X^\pi$ is reflected at zero. This is the famous Skorohod problem. See, for instance, Lemma 3.6.14 in \cite{karatzas-shreve_1991}.

Extending the notation from the previous section, for a given $b\geq 0$, we define the reflected mean-reverting dividend strategy at level $b$ by the dividend rate $\underline{\ell}^b_t = F(\underline{X}^b_t) \mathbf{1}_{\{\underline{X}^b_t \geq b \}}$, with associated controlled reflected process
\[
 \dd  \underline{X}^b_t = \left(\mu - F(\underline{X}^b_t) \mathbf{1}_{\{\underline{X}^b_t \geq b \}} \right)  \dd t + \sigma  \dd  B_t +  \dd  \underline{G}^{b}_t,
\]
where the injection process $\underline{G}^{b}$ satisfies
\[
\underline{G}^{b}_t = \int_0^t \mathbf{1}_{\{\underline{X}^b_s = 0 \}}  \dd  \underline{G}^{b}_s .
\]
Of course, we have $\underline{X}^b_t \geq 0$ for all $t \geq 0$. The existence of a solution to this specific Skorohod problem can be found in Lemma~\ref{lemma.Skorohod} in Appendix \ref{AppendixA}. Note that the representation $\underline{G}^{b} = \underline{G}^{\ell^b}$ is valid, but can only be well defined once $\underline{X}^b$ is shown to exist.

A solution of this second maximization problem is given in the following theorem and its proof is the objective of Section~\ref{sec:Vc}.

\begin{theorem}\label{thm:Vc}
There exists a  constant $b_c > 0$ such that a reflected mean-reverting dividend strategy at level $b_c$ is optimal for the control problem with forced capital injections. More precisely, for all $x \geq 0$, we have
\[
V_c(x) = \e_x \left[ \int_0^\infty \mathrm{e}^{-qs} \left( F(\underline{X}^{b_c}_s) \mathbf{1}_{\{\underline{X}^{b_c}_s \geq b_c \}}  \dd s - \beta \dd \underline{G}^{b_c}_s \right) \right].
\]
\end{theorem}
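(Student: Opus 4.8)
The plan is to combine the fluctuation-theory computation of performance functions for reflected mean-reverting strategies with a viscosity-solution verification argument, mirroring the structure used for $V_d$. First I would fix $b\geq 0$ and let $v_b(x)$ denote the performance function of the reflected mean-reverting dividend strategy at level $b$, i.e.\ $v_b(x) = \e_x[\int_0^\infty \mathrm{e}^{-qs}(F(\underline{X}^b_s)\mathbf{1}_{\{\underline{X}^b_s\geq b\}}\,\dd s - \beta\,\dd\underline{G}^b_s)]$. Using the strong Markov property at the first hitting time of zero, one gets the renewal-type decomposition $v_b(x) = h_b(x) + \e_x[\mathrm{e}^{-q\tau_0}]\,v_b(0)$ on $x\geq 0$, where $h_b$ is the discounted dividends collected before hitting zero and $\tau_0$ is the first passage to $0$; and separately, starting from $0$, the reflection term contributes $-\beta\,\e_0[\int_0^{\cdot}\mathrm{e}^{-qs}\dd\underline{G}^b_s]$. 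Standard excursion-theoretic / local-time identities for the refracted-reflected diffusion (as in the fluctuation-identity references cited, e.g.\ \cite{LS2016, LS2016} and the scale-function machinery) give semi-explicit expressions for these quantities in terms of the scale functions of $X$ and of the refracted process, from which $v_b$ is $C^1$ on $[0,\infty)$, $C^2$ off $\{b\}$, and satisfies the reflection (Neumann-type) condition $v_b'(0) = \beta$.

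Next I would select the optimal barrier $b_c$. The candidate value function must be $C^1$, satisfy $v'(0)=\beta$, solve $\mathcal{L}v - qv + \sup_{0\le \ell\le F(x)}\ell(1-v'(x)) = 0$ on $(0,\infty)$ — which forces the bang-bang form $\ell = F(x)\mathbf{1}_{\{v'(x)\le 1\}}$, hence the refracted structure with free boundary where $v'=1$. I expect $b_c$ to be characterized by a smooth-fit condition at the refraction level (continuity of $v_b''$ at $b$, equivalently an equation analogous to Equation~\eqref{eq.bd}); I would show this equation has a solution $b_c>0$ by an intermediate-value / monotonicity argument on $b\mapsto (\text{second-derivative jump at }b)$, using concavity of $F$ and $\beta>1$ to rule out $b_c=0$ (if dividends could be paid at $0$ it would never be worth injecting at rate $\beta>1$, forcing the threshold strictly positive). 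I would also verify the relevant concavity/monotonicity of $v_{b_c}$ so that the supremum in the HJB operator is indeed attained by the claimed bang-bang rate.

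Then comes verification. Having a $C^1$ (and piecewise $C^2$) candidate $W := v_{b_c}$ that satisfies the HJB variational inequality $\max\{\mathcal{L}W - qW + F(x)(1-W')^+ - qW\cdot 0,\ \dots\}$ with the boundary behaviour $W'(0)=\beta$, I would apply Itô's formula (Itô–Tanaka at the single non-$C^2$ point $b_c$, with zero occupation time there) to $\mathrm{e}^{-qt}W(\underline{X}^\pi_t)$ for an arbitrary $\pi=(\ell,\underline{G}^\ell)\in\Pi_c$: the drift term is $\le 0$ by the HJB inequality, the injection term contributes $-\int \mathrm{e}^{-qs}W'(0)\,\dd\underline{G}^\ell_s = -\beta\int\mathrm{e}^{-qs}\dd\underline{G}^\ell_s$ exactly (since injections act only at $0$ for strategies in $\Pi_c$ — here I invoke, or re-derive, that it suffices to consider such reflected injection strategies, as argued in the introduction), and after localization and letting $t\to\infty$ (discounting kills the boundary term since $W$ has at most linear growth, controlled via $F$ concave hence sublinear) one obtains $W(x)\ge J(x;\pi)$ for all admissible $\pi$, with equality for $\pi=(\underline{\ell}^{b_c},\underline{G}^{b_c})$ because then the HJB inequality is an equality along the path. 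This gives $V_c = W = v_{b_c}$ and the stated formula.

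The main obstacle I anticipate is establishing existence and the precise characterization of $b_c$ together with the regularity ($C^1$ across $0$ with $v'(0)=\beta$, and the correct sign/monotonicity properties) of the candidate $v_{b_c}$: this requires careful manipulation of the fluctuation identities for the doubly-modified (refracted at $b$, reflected at $0$) diffusion and a delicate argument that the smooth-fit equation is solvable with $b_c>0$; a secondary subtlety is justifying the Itô–Tanaka application and the limit $t\to\infty$ uniformly over the large class $\Pi_c$ of capital-injection controls, for which the reduction to reflected injections and the sublinear growth of $W$ are the key inputs.
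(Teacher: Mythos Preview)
Your proposal is a correct classical \emph{guess-and-verify} route, but the paper proceeds quite differently, and the difference is precisely at the point you flag as the main obstacle. You plan to (i) compute $v_b$ by fluctuation identities, (ii) locate $b_c$ by solving a smooth-fit equation via an intermediate-value/monotonicity argument, and then (iii) run a verification lemma (It\^o--Tanaka plus HJB inequality) against all $\pi\in\Pi_c$. The paper instead works \emph{from the value function down}: it first proves directly that $V_c$ is concave, that $V_c'(0+)=\beta$, and that $V_c$ is the unique viscosity solution in $\mathcal W$ of the HJB equation with Neumann condition $u'(0)=\beta$; from concavity and a linear upper bound it then \emph{defines} $b_c:=\inf\{x:V_c'(x)=1\}$ and shows $0<b_c<\infty$ without ever analysing a smooth-fit equation. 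Regularity is then upgraded to $V_c\in C^2$ by splitting the HJB equation at $b_c$ into two linear ODEs. Finally, the fluctuation computation of $J_c(\cdot;b)$ is used not for verification but simply to observe that $J_c(\cdot;b_c)$ solves the same pair of ODEs with the same boundary data as $V_c$, whence $J_c(\cdot;b_c)=V_c$.

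What each approach buys: yours is self-contained and avoids viscosity machinery, but you must establish solvability of the smooth-fit equation and the concavity/sign properties of the candidate $v_{b_c}$ needed for the HJB inequality to hold globally, which for a general concave $F$ is genuine work. The paper's route gets existence of $b_c$ and concavity of the candidate for free (they are properties of $V_c$ itself), at the price of the viscosity comparison principle and the DPP; no It\^o-based verification against arbitrary strategies is ever performed. The paper explicitly presents this inversion of the usual order as a methodological contribution.
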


The optimal barrier level $b_c$ is characterized as the unique solution of either Equation~\eqref{eq.bc1} or Equation~\eqref{eq.bc2} given below. An analytical expression for $V_c$ is also derived in Section~\ref{sec:Vc}. In particular, we will show that $V'_c(0+)=\beta$.

\subsection{Optimization of capital injections and dividend payments}

Using the solutions of the previous two maximization problems, our solution of the main optimization problem~\eqref{eq:value_fct} is as follows.
\begin{theorem}\label{th.dichotomy}
The value function $V$ is characterized by the following dichotomy:
\begin{itemize}
\item if $V_d^\prime(0+) < \beta$, then an optimal strategy is given by $(\ell^{b_d},0)$ and $V = V_d$;
\item if $V_d^\prime(0+) > \beta$, then an optimal strategy is given by $(\underline{\ell}^{b_c}, \underline{G}^{b_c})$ and $V = V_c$.
\end{itemize}
Equivalently,
\begin{itemize}
\item if $V_c(0) < 0 $, then an optimal strategy is given by $(\ell^{b_d},0)$ and $V = V_d$;
\item if $V_c(0) > 0 $, then an optimal strategy is given by $(\underline{\ell}^{b_c}, \underline{G}^{b_c})$ and $V = V_c$.
\end{itemize}
Finally, if $V_d^\prime(0+) = \beta$ or $V_c(0)=0$, then both $(\ell^{b_d},0)$ and $(\underline{\ell}^{b_c}, \underline{G}^{b_c})$ are optimal strategies, and we have $V = V_c = V_d$.
\end{theorem}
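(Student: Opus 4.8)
\textbf{Proof proposal for Theorem~\ref{th.dichotomy}.}

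The plan is to establish the dichotomy by first comparing $V$ with the two sub-problem value functions $V_d$ and $V_c$, then using a verification-type argument adapted to viscosity solutions, since a priori we do not know $V$ is $C^1$. Since $\Pi_d \subset \Pi$ and $\Pi_c \subset \Pi$, we immediately have $V \geq V_d$ and $V \geq V_c$ on $[0,\infty)$, and in particular $V(0) \geq \max\{V_d(0), V_c(0)\} = \max\{0, V_c(0)\}$. The first thing I would do is record the equivalence of the two stated forms of the dichotomy: since $V_d(0)=0$, $V_c(0)=0$, and (from the earlier sections) $V_d$ and $V_c$ solve their respective HJB equations with $V_c'(0+)=\beta$, one shows that $V_c(0) \lessgtr 0$ if and only if $V_d'(0+) \lessgtr \beta$. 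This should follow from a direct comparison of the ODEs satisfied by $V_d$ and $V_c$ on a neighbourhood of zero together with the boundary conditions $V_d(0)=V_c(0)=0$ versus $V_c(0) \ne 0$ — essentially, the sign of $V_c(0)$ is governed by whether the marginal value of a unit of cash at zero under the dividends-only strategy exceeds the marginal cost $\beta$ of injecting it.

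Next I would set up the HJB variational inequality for $V$. Heuristically $V$ should satisfy, in the viscosity sense,
\[
\max\left\{ \sup_{0 \leq \ell \leq F(x)} \left( \tfrac{\sigma^2}{2} V''(x) + (\mu-\ell) V'(x) - q V(x) + \ell \right),\ \beta - V'(x) \right\} = 0
\]
on $(0,\infty)$, with the boundary behaviour $V(0) = \max\{0, V_c(0)\}$ reflecting that at zero one may either stop (value $0$) or inject (paying $\beta$ per unit). The supremum over $\ell$ is linear in $\ell$, so it is attained at $\ell = F(x)$ when $V'(x) \leq 1$ and at $\ell = 0$ otherwise, which is the source of the bang-bang/mean-reverting structure. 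The key structural claim — which I would lean on from the earlier part of the paper and the discussion in the introduction — is that any optimal capital injection process must be continuous and increase only at zero; granting this, the candidate optimal strategies are exactly those producing refracted-or-reflected mean-reverting processes, i.e. the two candidates named in the theorem.

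The core of the argument is then a case analysis. In the case $V_d'(0+) < \beta$: I would show $V_d$ is a viscosity supersolution of the full HJB equation for $V$ on $[0,\infty)$. The only nontrivial inequality is $\beta - V_d'(x) \geq 0$, i.e. $V_d'(x) \leq \beta$ for all $x$; since $V_d'$ is decreasing (by concavity of $V_d$, established in Section~\ref{sec:Vd}) and $V_d'(0+) < \beta$, this holds everywhere. Combined with $V_d(0)=0 \geq $ the stopping value, $V_d$ dominates $V$ by the comparison principle, hence $V = V_d$ and $(\ell^{b_d},0)$ is optimal. In the case $V_d'(0+) > \beta$, equivalently $V_c(0) > 0$: here I would show $V_c$ is a viscosity supersolution of the HJB equation for $V$ — the reflecting boundary condition $V_c'(0+) = \beta$ is exactly what makes the injection term $\beta - V_c'(0+) = 0$ active at zero, and $V_c$ solving its own HJB inequality handles the interior — and that $V_c(0) > 0$ matches the boundary value $\max\{0,V_c(0)\}$; comparison then gives $V = V_c$. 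The boundary/equality case $V_d'(0+)=\beta \iff V_c(0)=0$ forces $V_c = V_d$ on $[0,\infty)$ (both solve the same HJB problem with the same boundary data by uniqueness), so both candidate strategies are optimal.

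The main obstacle I anticipate is the comparison principle itself at the boundary point $x=0$: the HJB operator degenerates there, the boundary condition is of a nonstandard mixed (Dirichlet/Neumann-type, "stop-or-reflect") form, and one must rule out loss of the comparison inequality at $0$. This is presumably the "new comparison result ... based on either the initial values of the super and subsolutions or their initial derivatives" advertised in the introduction, and I would structure the boundary analysis precisely around the two quantities $V(0)$ and $V'(0+)$: if a putative maximum of (subsolution $-$ supersolution) occurs at $0$, then comparing \emph{either} the values \emph{or} the one-sided derivatives there yields a contradiction, with the dichotomy hypothesis ($V_d'(0+) \ne \beta$, equivalently $V_c(0) \ne 0$) being exactly what selects which of the two comparisons applies. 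A secondary technical point is justifying that the named candidate strategies are admissible and that their performance functions genuinely equal $V_d$ and $V_c$ when plugged into the \emph{full} problem $J(\cdot;\cdot)$ — but this is immediate since $(\ell^{b_d},0) \in \Pi_d$ and $(\underline{\ell}^{b_c},\underline{G}^{b_c}) \in \Pi_c$ and $J$ restricted to these classes is what defines $V_d$ and $V_c$.
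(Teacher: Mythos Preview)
Your overall plan---verification via a comparison principle---is right in spirit, but the route differs from the paper's and your execution has gaps.

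\textbf{How the paper actually proceeds.} The paper does \emph{not} use a variational inequality with an injection term $\beta - V'$. Instead, it first proves (Lemma~\ref{lemma.2}, Proposition~\ref{prop:Vc_reflected}) that optimal injections occur only at level zero; this reduces $V$ to a supremum over $\Pi^0$ and forces $V$ to satisfy the \emph{same} interior HJB~\eqref{eq.visco} as $V_d$ and $V_c$. The three functions differ only in their boundary data: $V_d(0)=0$, $V_c'(0+)=\beta$, and---this is the nontrivial Theorem~\ref{thm:viscosity-characterization}---$V$ satisfies $\min\bigl(V(0),\,\beta - V'(0+)\bigr)=0$. The comparison principle (Theorem~\ref{thm:comparison}) then compares solutions via \emph{either} initial values \emph{or} initial derivatives. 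The dichotomy is obtained in two steps: first an intermediate dichotomy on whether $V(0)=0$ or $V(0)>0$ (giving $V=V_d$ or $V=V_c$ respectively), and then the stated dichotomies~\eqref{eqV2}--\eqref{eqV3} by case analysis, with the equivalence of the two formulations falling out at the end rather than being proved up front.

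\textbf{Gaps in your proposal.} First, your HJB has a sign error: the injection constraint is $V'(x)\le\beta$, so the correct variational form is $\min\bigl\{qV-\sup_\ell\mathcal L_\ell V,\;\beta - V'\bigr\}=0$, not the $\max$ you wrote (your later inequality $\beta - V_d'(x)\ge 0$ is the right one, so this may be a slip, but it matters). Second, you assert the boundary condition $V(0)=\max\{0,V_c(0)\}$ without proof; establishing the correct boundary behaviour of $V$ is precisely the hard part (the paper's Theorem~\ref{thm:viscosity-characterization}, whose proof that $V'(0+)=\beta$ when $V(0)>0$ is a genuine DPP argument, not a formality). Third, you write ``$V_c(0)=0$'' in your first paragraph---this is false in general and is the whole point of the dichotomy. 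Fourth, your equivalence argument ($V_c(0)\lessgtr 0 \iff V_d'(0+)\lessgtr\beta$) is only a sketch; the paper never proves this directly but instead establishes both dichotomies separately and deduces the boundary-case equality $V_c=V_d$ from the comparison principle. Finally, you invoke concavity of $V_d$ ``established in Section~\ref{sec:Vd}'', but that section does not prove concavity---it is imported from \cite{LR2023}, and you would need it explicitly to push your supersolution argument through.
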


Intuitively, the condition $V_d'(0+) > \beta=V_c'(0+)$ means that, when the process reaches zero, then it is worth injecting capital and keep on going for more dividends later. On the other hand, the condition $V_d'(0+) < \beta$ means that the marginal benefit of keeping the process \textit{alive} is less than the marginal cost of injecting capital.

Similarly, if $V_c(0) < 0=V_d(0)$, then the value of keeping the process nonnegative with injections is less than the value of letting it get ruined when reaching zero. For the condition $V_c(0)>0$, we have the opposite interpretation.

Note that, if $V_d'(0+) = \beta$ or $V_c(0)=0$, then the firm is indifferent between paying dividends at level $b_c$ and injecting capital each time zero is attained, or paying dividends at level $b_d$ and never inject capital (which means getting ruined at the first passage at level zero).

\section{Notation and preliminaries}\label{sec:notation}

In this short section, we introduce notation and functions that will be used throughout the paper.

First, recall that $X_t = X_0 + \mu t + \sigma B_t$ and define $Y_t = X_t + \underline{G}_t$ where $\underline{G}_t:=- \min \{ X_s \wedge 0 : 0\leq s \leq t \}$. In words, $Y=\left\lbrace Y_t , t \geq 0 \right\rbrace$ is a Brownian motion with drift, reflected at zero.

Second, let us define first-passage stopping times. For a given process $Z=\left\lbrace Z_t , t \geq 0 \right\rbrace$ and a level $b \geq 0$, set
\[
\tau^Z_b = \inf \left\lbrace t > 0 \colon Z_t = b \right\rbrace .
\]
In what follows, the process $Z$ will be for example $X$, $Y$, $X^b$, $\underline{X}^b$,  etc.

Third, we must define several functions needed to state various probabilistic identities. For $x,b \geq 0$, set
\begin{align*}
\Wq(x) &= \frac{\mathrm{e}^{\alpha_1 x}}{\Delta} -\frac{\mathrm{e}^{\alpha_2 x}}{\Delta} ,\\
\overline{\Wq}(x) &= \int_0^x \Wq(y)  \dd y = \frac{\mathrm{e}^{\alpha_1 x}}{\alpha_1 \Delta} - \frac{\mathrm{e}^{\alpha_2 x}}{\alpha_2 \Delta} - \frac{1}{q},\\
\Zq(x) &= 1 + q \overline{\Wq}(x) = \frac{q \mathrm{e}^{\alpha_1 x}}{\alpha_1 \Delta} - \frac{q\mathrm{e}^{\alpha_2 x}}{\alpha_2 \Delta} ,\\
\overline{\Zq}(x) &= \int_0^x \Zq(y)  \dd y = \frac{q \mathrm{e}^{\alpha_1 x}}{\alpha_1^2 \Delta} - \frac{q\mathrm{e}^{\alpha_2 x}}{\alpha_2^2 \Delta} - \frac{\mu}{q} ,\\
u_{0,b}(x) &= \frac{\Zq(x)}{\Zq(b)} \left(\overline{\Zq}(b)+\frac{\mu}{q} \right) - \left(\overline{\Zq}(x) + \frac{\mu}{q} \right) ,
\end{align*}
where $\alpha_1=\sigma^{-2} \left(-\mu + \Delta \right)$, $\alpha_2=\sigma^{-2} \left(-\mu - \Delta \right)$ and $\Delta=\sqrt{\mu^2+2q \sigma^2}$. Note that $\Wq(0)=\overline{\Wq}(0)=\overline{\Zq}(0)=u_{0,b}(b)=0$ and $\Zq(0)=1$. Moreover, $u'_{0,b}(0) = -1, \overline{\Zq}'(0) = 1,$  $\Zq'(0)=\overline{\Wq}'(0) = 0$ and $\Wqprime(0) = \tfrac{2}{\sigma^2}.$

As $\alpha_1$ and $\alpha_2$ are the positive and negative roots of $\frac{\sigma^2}{2} \alpha^2 + \mu \alpha - q = 0$, respectively, it is easy to verify that $\Wq, \Zq$ and $\overline{\Zq}+\mu/q$ are solutions to
\begin{equation}\label{eq:ODE_hom}
\frac{\sigma^2}{2} v''(x) + \mu v'(x) - q v(x) = 0 ,  \quad x > 0,
\end{equation}
as linear combinations of $x \mapsto \mathrm{e}^{\alpha_1 x}$ and $x \mapsto \mathrm{e}^{\alpha_2 x}$. The function $u_{0,b}$ is also a solution to~\eqref{eq:ODE_hom} as it is a linear combination of $\Zq$ and $\overline{\Zq}+\mu/q$.

It is known that, for $0 \leq x \leq b$,
\[
\e_x \left[ \mathrm{e}^{-q \tau^Y_b} \right] = \frac{\Zq(x)}{\Zq(b)},  \e_x \left[ \mathrm{e}^{-q \tau_b^X} \ind_{\{\tau_b^X<\tau_0^X\}} \right] = \frac{\Wq(x)}{\Wq(b)}\text{ and } \e_x \left[ \int_0^{\tau^Y_b} \mathrm{e}^{-qt}  \dd  \underline{G}_t \right] = u_{0,b}(x).
\]
See, e.g., \cite{APP2007}.

By Lemma 2.4 in \cite{ekstrom-lindensjo_2021}, there exists  a unique decreasing and strictly convex solution to
\begin{equation}\label{eq:ODE_F}
\frac{\sigma^2}{2} \varphi''(x) + (\mu - F(x)) \varphi'(x) - q \varphi(x) = 0 ,  \quad x > 0,
\end{equation}
such that $\varphi(0)=1$ and $\lim_{x \to \infty} \varphi(x)=0$. Furthermore,  $\e_x \left[ \mathrm{e}^{-q \tau_b^{X^0}} \right] = \frac{\Hq(x)}{\Hq(b)}$, for $0 \leq x \leq b$.

Denote the set of continuous functions with linear growth at infinity by
\[
\mathcal W = \left\lbrace u : [0,\infty) \to \mathbb{R} \colon u \mbox{ continuous and } \sup_{x \geq 0 } \frac{|u(x)|}{1+x} < \infty \right\rbrace .
\]
For $x \geq 0$, define
\[
I_F(x) = \e_x \left[\int_0^\infty \mathrm{e}^{-qt} F(\underline{X}^0_t)  \dd t \right] .
\]
It can be shown that $I_F : \reals_+ \to \reals_+$ is the unique solution in $\mathcal{W}$ to the following ordinary differential equation (ODE):
\begin{equation} \label{ODE.nonhom}
\frac{\sigma^2}{2} u''(x) + (\mu - F(x))u'(x) - q u(x) + F(x) = 0 ,  \quad x > 0 ,
\end{equation}
with initial condition $u'(0+) = 0$.

Direct calculations show that the function $I_F^\lambda := I_F + \lambda \varphi$ is also a solution to the nonhomogeneous ODE in~\eqref{ODE.nonhom}, for any $\lambda \in \reals$.

\begin{remark}
Consider the function $\tilde{I}_F : \reals \to \reals$ introduced in \cite{LR2023} and defined by
\begin{equation}\label{eq:Itilde}
\tilde{I}_F(x) = \e_x \left[ \int_0^\infty \mathrm{e}^{-qs} F(X^{0}_s) \dd s \right].
\end{equation}
Using Markovian arguments, continuity of sample paths and the fact that $X^0$ and $\underline{X}^0$ have the same distribution up to $\tau_0^{X^0}$, with respect to $\p_x$ (for $x\geq0$), we readily see that
\begin{eqnarray*}
  \tilde{I}_F(x) &=& \e_x \left[ \int_0^{\tau_0^{X^0}} \mathrm{e}^{-qs} F(X^{0}_s) \dd s  + \mathrm{e}^{-q \tau_0^{X^0}} \tilde{I}_F(0) \right] \\
   &=& \e_x \left[ \int_0^{\tau_0^{X^0}} \mathrm{e}^{-qs} F(\underline{X}^{0}_s) \dd s  + \mathrm{e}^{-q \tau_0^{X^0}} \tilde{I}_F(0) \right]\\
   &=& I_F(x) - \e_x \left[ \mathrm{e}^{-q \tau_0^{X^0}} I_F(0) \right]+ \e_x \left[ \mathrm{e}^{-q \tau_0^{X^0}} \tilde{I}_F(0) \right] \\
   &=& I_F(x) + (\tilde{I}_F(0)-I_F(0)) \varphi(x) .
\end{eqnarray*}
This last expression is $I_F^\lambda(x)$ with $\lambda = \tilde{I}_F(0) - I_F(0).$
\end{remark}

\section{Solution of the problem without injections}\label{sec:Vd}

First, we define the performance function of an arbitrary mean-reverting strategy at level $b$, when injections are not allowed: for $x \geq 0$, define
\[
J_d(x; b) = \e_x \left[ \int_0^{T^b} \mathrm{e}^{-qs}  F(X^{b}_s) \mathbf{1}_{\{X^{b}_s \geq b\}}  \dd s \right] .
\]
Of course, for any fixed $b \geq 0$, we have $J_d(x;b) \leq V_d(x)$ for all $x \geq 0$, which of course yields $\sup_{b \geq 0} J_d(x;b) \leq V_d(x)$ for all $x \geq 0$.

In this section, we briefly summarize the main findings of Locas \& Renaud \cite{LR2023} who showed that there exists a level $b_d$ such that $J_d(x;b_d) = V_d(x)$ for all $x \geq 0$. They also gave a characterization of this threshold. In the next theorem, we give a slightly different but equivalent statement of their main result.

\begin{theorem}\label{prop: Jd_continuous}
If $b=0$, then $J_d(x;b) = -I_F(0) \Hq(x) + I_F(x)$. If $b > 0$, then
\[
J_d(x;b) =
\begin{cases}
 D_1(b) \Wq(x) , & \text{if $0 \leq x \leq b$,}\\
 D_2(b)  \Hq(x) + I_F(x) , & \text{if $x \geq b$,}
\end{cases}
\]
where
\begin{align*}
D_1(b) &= \frac{\Hq(b) I_F^\prime (b) - \Hqprime(b) I_F(b)}{\Hq(b) \Wqprime(b) - \Hqprime(b) \Wq(b)} , & D_2(b) = \frac{\Wq(b) I_F^\prime (b) - \Wqprime(b) I_F(b)}{\Hq(b) \Wqprime(b) - \Hqprime(b) \Wq(b)} .
\end{align*}
\end{theorem}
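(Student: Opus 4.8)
The plan is to compute $J_d(x;b)$ by solving the relevant ODEs piecewise and then matching at the barrier $b$. On the interval $(b,\infty)$, under the mean-reverting strategy the controlled process $X^b$ coincides with $\underline{X}^0$-type dynamics $\dd X^b_t = (\mu - F(X^b_t))\dd t + \sigma \dd B_t$ (killed at zero, which here is below $b$ so irrelevant until the process drops to $b$), and the dividend rate equals $F(X^b_t)$; hence $J_d(\cdot;b)$ on $[b,\infty)$ should satisfy the nonhomogeneous ODE~\eqref{ODE.nonhom}. A particular solution is $I_F$, and the general solution in the class with linear growth that we need is $I_F + D_2(b)\Hq$, since $\Hq$ spans the decreasing homogeneous solution of~\eqref{eq:ODE_F} vanishing at infinity (the other homogeneous solution is excluded by growth/integrability). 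On $[0,b]$, the strategy pays no dividends, so $X^b$ is just the uncontrolled $X$ killed at $0$, and $J_d(\cdot;b)$ satisfies the homogeneous ODE~\eqref{eq:ODE_hom} with boundary value $0$ at $x=0$ (since the process is killed there and no dividends accrue on a set of measure zero); the solution vanishing at $0$ is a multiple of $\Wq$, giving $D_1(b)\Wq$ on $[0,b]$.

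Next I would pin down the two constants $D_1(b), D_2(b)$ by imposing $C^1$-matching of the two pieces at $x=b$. A standard argument — either via It\^o's formula applied to the candidate function together with the strong Markov property at $\tau^{X^b}_b$, or via the probabilistic decomposition $J_d(x;b) = \e_x[\mathrm e^{-q\tau_b^{X}}\mathbf 1_{\{\tau_b^X < \tau_0^X\}}]\, J_d(b;b)$ for $x \le b$ combined with the known Laplace-transform identity $\e_x[\mathrm e^{-q\tau_b^X}\mathbf 1_{\{\tau_b^X<\tau_0^X\}}] = \Wq(x)/\Wq(b)$ — shows that the value function is $C^1$ across $b$ because the drift has only a finite jump there. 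Smooth fit at $b$ then gives the linear system
\[
D_1(b)\Wq(b) = D_2(b)\Hq(b) + I_F(b), \qquad D_1(b)\Wqprime(b) = D_2(b)\Hqprime(b) + I_F'(b),
\]
whose solution, by Cramer's rule with Wronskian-type determinant $\Hq(b)\Wqprime(b) - \Hqprime(b)\Wq(b)$ in the denominator, yields exactly the claimed formulas for $D_1(b)$ and $D_2(b)$. For the boundary case $b=0$, the inner interval degenerates and one simply needs $J_d(\cdot;0) = I_F + \lambda\Hq$ with $\lambda$ chosen so that $J_d(0;0)=0$ (no dividends and immediate killing at $x=0$), i.e. $\lambda = -I_F(0)$, giving $J_d(x;0) = -I_F(0)\Hq(x) + I_F(x)$; consistency with the general formula follows by letting $b\to 0$ and using $\Wq(0)=0$, $\Hq(0)=1$.

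The main obstacle I anticipate is rigorously justifying the $C^1$ smooth-fit at $x=b$ and the identification of $J_d(\cdot;b)$ with the stated ODE solution, rather than the algebra itself. One must argue that $J_d(\cdot;b)$ is sufficiently regular (at least $C^1$, with $C^2$ away from $b$) to apply It\^o's formula, and that the stochastic integral terms are genuine martingales so that the Feynman--Kac representation is valid; integrability here relies on the linear-growth control on $F$ (hence on the performance function) and the discounting $q>0$. One also needs existence and uniqueness of the relevant SDE solution $X^b$ (cited from \cite{LS2016}) and that $T^b = \tau_0^{X^b}$ is reached with the correct behavior. Since the excerpt states this is essentially a restatement of the result of Locas \& Renaud \cite{LR2023}, I would invoke their regularity results for $J_d(\cdot;b)$ and focus the proof on the explicit computation and matching, noting that the only delicate point is verifying the smooth-fit identities that produce $D_1, D_2$.
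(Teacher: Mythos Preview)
The paper does not actually prove this theorem: it is quoted from \cite{LR2023}, with only the remark afterwards that $C^1$-smoothness at $b$ \emph{follows} from the explicit formula. The method behind the cited result (visible in the paper's own proof of the analogous Theorem~\ref{prop: Jc_continuous} and in Lemma~\ref{Lemma:fluctuation-identity}) is purely probabilistic: decompose via the strong Markov property at successive hitting times of $0$ and $b$, insert the known exit identities $\e_x[\mathrm e^{-q\tau_b^X}\mathbf 1_{\{\tau_b^X<\tau_0^X\}}]=\Wq(x)/\Wq(b)$ and $\e_x[\mathrm e^{-q\tau_b^{X^0}}]=\Hq(x)/\Hq(b)$, and pin down the value at the barrier by an approximating-sequence argument---smooth fit then drops out as a \emph{consequence} rather than an input. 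Your ODE-plus-matching route is a valid alternative and reaches $D_1,D_2$ more directly, but the step you correctly flag as delicate---justifying $C^1$ at $b$ a priori---is exactly what the probabilistic approach sidesteps; to close your argument cleanly without invoking the regularity results of \cite{LR2023}, build the $C^1$-matched candidate first and then \emph{verify} via It\^o's formula (the drift jump at $b$ being harmless since $X^b$ carries no local time there) that it equals $J_d(\cdot;b)$, so that smooth fit becomes an ansatz confirmed by verification rather than an unproved hypothesis.
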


First, it is interesting to note that $D_2(0)=-I_F(0)$. Second, it is even more interesting to note that, in the above theorem, the function $I_F$ can be replaced by $I_F^\lambda$ for any $\lambda \in \reals$. In particular, in \cite{LR2023}, the result is stated with $\tilde{I}_F : \reals \to \reals$ as defined in~\eqref{eq:Itilde}.

Finally,  it is straightforward to show that, for $b>0$, we have $J_d'(b-;b) = J_d'(b+;b)$, which means that $x \mapsto J_d(x;b)$ is a continuously differentiable function. Moreover, the optimal threshold $b_d$ is the value (of $b$) that makes $J_d(\cdot;b_d)$ a twice continuously differentiable function or, equivalently, such that $J_d'(b_d;b_d)=1$.

\begin{proposition}\label{prop.bd}
The optimal barrier level $b_d$ is characterized as follows:
\begin{enumerate}
  \item if $I_F'(0) - I_F(0) \varphi'(0) \leq 1$, then $b_d = 0$;
  \item if $I_F'(0) - I_F(0) \varphi'(0) > 1$, then $b_d>0$ and it is a solution of
\begin{equation}\label{eq.bd}
I_F'(b_d) \varphi(b_d) - I_F(b_d) \varphi'(b_d) = \varphi(b_d) - \frac{\Wq(b_d)}{\Wqprime(b_d)} \varphi'(b_d) .
\end{equation}
\end{enumerate}
In both cases, $V_d(\cdot) = J_d(\cdot;b_d) \in C^2([0,\infty))$.
\end{proposition}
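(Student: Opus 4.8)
The plan is to exploit the explicit formula for $J_d(\cdot;b)$ from Theorem~\ref{prop: Jd_continuous} together with the ``smooth fit at $b$'' principle stated in the paragraph preceding the proposition: since $x\mapsto J_d(x;b)$ is already $C^1$ for every $b>0$, the optimal threshold $b_d$ should be the unique value of $b$ for which $J_d(\cdot;b)$ becomes $C^2$, equivalently for which $J_d'(b;b)=1$ (the value $1$ coming from matching the second-derivative condition in the ODEs~\eqref{eq:ODE_hom} and~\eqref{ODE.nonhom} at $x=b$, where the indicator in the dividend rate switches on). First I would write $J_d'(b-;b) = D_1(b)\Wqprime(b)$ using the formula on $[0,b]$; substituting the expression for $D_1(b)$ gives
\[
J_d'(b-;b) = \frac{\Wqprime(b)\bigl(\Hq(b) I_F'(b) - \Hqprime(b) I_F(b)\bigr)}{\Hq(b)\Wqprime(b) - \Hqprime(b)\Wq(b)} .
\]
Setting this equal to $1$ and clearing the denominator yields, after cancelling the common $\Wqprime(b)\Hqprime(b)$-type terms, exactly Equation~\eqref{eq.bd}: $I_F'(b)\Hq(b) - I_F(b)\Hqprime(b) = \Hq(b) - \tfrac{\Wq(b)}{\Wqprime(b)}\Hqprime(b)$. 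For the boundary case $b=0$, the condition $J_d'(0;0)\le 1$ becomes, from the $b=0$ formula $J_d(x;0) = -I_F(0)\Hq(x)+I_F(x)$ and the fact that $I_F'(0+)=0$, the inequality $I_F'(0) - I_F(0)\Hqprime(0)\le 1$; note $I_F'(0)=0$ here, so one should read these derivatives as appropriate one-sided limits, and the dichotomy between cases (1) and (2) is precisely the sign of $I_F'(0)-I_F(0)\Hqprime(0)-1$.

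The substantive parts of the argument, beyond these calculations, are (a) \emph{existence and uniqueness} of a solution to~\eqref{eq.bd} in case (2), and (b) \emph{verification} that the resulting $J_d(\cdot;b_d)$ is genuinely the value function $V_d$ and lies in $C^2([0,\infty))$. For (a) I would define $g(b) := J_d'(b-;b) - 1$ (or an equivalent quantity built from the left side minus right side of~\eqref{eq.bd}), show $g$ is continuous, that $g(0) = I_F'(0)-I_F(0)\Hqprime(0)-1 > 0$ under the case-(2) hypothesis, and that $g(b)\to$ something $\le 0$ (or changes sign) as $b\to\infty$, using the known monotonicity/convexity of $\Hq$ (decreasing, strictly convex) and the growth properties of $I_F\in\mathcal W$; monotonicity of $g$ — hence uniqueness — should follow from the strict convexity of $\Hq$ together with the concavity of $F$, mirroring the argument in \cite{LR2023}. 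For (b), with $b_d$ so chosen, $J_d(\cdot;b_d)$ solves~\eqref{eq:ODE_hom} on $(0,b_d)$ and~\eqref{ODE.nonhom} on $(b_d,\infty)$, is $C^1$ by the smooth-fit remark, and is $C^2$ by the very equation defining $b_d$; a standard Itô/verification argument (applying Itô's formula to $e^{-qt}J_d(X^{b_d}_t;b_d)$, using that the HJB operator is nonpositive off the free boundary, and that $J_d$ dominates $J_d(\cdot;\pi)$ for every admissible $\pi\in\Pi_d$) then gives $V_d = J_d(\cdot;b_d)$. Since this verification is carried out in \cite{LR2023} and Theorem~\ref{thm:Vd} is already quoted, I would keep this step brief and cite it.

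I expect the main obstacle to be step (a), specifically proving \emph{uniqueness} of the root of~\eqref{eq.bd}: the function $g$ is a combination of $I_F$, $\Hq$ and $\Wq$ and their derivatives, and showing it is strictly monotone (or crosses zero exactly once) requires using the defining ODEs to control $g'$ and then invoking the sign information coming from $F$ nondecreasing and concave, $\Hq$ decreasing and strictly convex, and the probabilistic nonnegativity $I_F\ge 0$. A clean way to organize this is to rewrite~\eqref{eq.bd} as $\frac{I_F'(b)\Hq(b)-I_F(b)\Hqprime(b)}{\Hq(b)-(\Wq(b)/\Wqprime(b))\Hqprime(b)} = 1$ and analyze the left-hand side as a function of $b$, showing it is strictly increasing from the value $I_F'(0)-I_F(0)\Hqprime(0)$ at $b=0$; the Wronskian-type denominator $\Hq(b)\Wqprime(b)-\Hqprime(b)\Wq(b)$ is positive for $b>0$ (both $\Wq$ and $\Hq$ are positive with $\Wq$ increasing, $\Hq$ decreasing), which keeps all the algebraic manipulations well-defined. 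Everything else — the two explicit derivative computations and the verification — is routine given the results already established in the excerpt.
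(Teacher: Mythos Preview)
Your approach is essentially the one the paper adopts, but note that the paper does not actually prove Proposition~\ref{prop.bd}: Section~\ref{sec:Vd} is explicitly a summary of \cite{LR2023}, and the only argument given is the sentence preceding the proposition, namely that $b_d$ is the value of $b$ making $J_d(\cdot;b)$ twice continuously differentiable, equivalently $J_d'(b_d;b_d)=1$. Your derivation of~\eqref{eq.bd} from $D_1(b)\Wqprime(b)=1$ and of the case-(1) condition from $J_d'(0+;0)=I_F'(0)-I_F(0)\Hqprime(0)$ is exactly this characterization made explicit, and your plan to defer the verification step to \cite{LR2023} matches what the paper does.

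One small slip: in your existence/uniqueness sketch you propose to show that the ratio $\dfrac{I_F'(b)\Hq(b)-I_F(b)\Hqprime(b)}{\Hq(b)-(\Wq(b)/\Wqprime(b))\Hqprime(b)}$ is \emph{strictly increasing} from its value at $b=0$. But this ratio equals $J_d'(b;b)$, and under the case-(2) hypothesis it starts \emph{above} $1$ and must reach $1$ at $b_d$; so the relevant monotonicity is downward, not upward. This does not affect the structure of your argument (intermediate value plus monotonicity), only the direction you need to establish.
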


Putting all of the above together gives Theorem~\ref{thm:Vd} and more. In particular, as we have an \textit{explicit} expression for $V_d$, we can use it with the optimization dichotomy (in Theorem~\ref{th.dichotomy}). Indeed, if $I_F'(0) - I_F(0) \varphi'(0) \leq 1$, then $b_d = 0$ and hence $V'_d(0+) = I_F'(0) - I_F(0) \varphi'(0) < \beta$. From Theorem~\ref{th.dichotomy}, we can deduce that, if $I_F'(0) - I_F(0) \varphi'(0) \leq 1$, then $V=V_d$. If $I_F'(0) - I_F(0) \varphi'(0) > 1$, then $b_d > 0$ and hence $V'_d(0+) = \frac{\Wq'(0+)}{\Wq'(b_d)}$. As $\Wq'(0+)=2/\sigma^2$, we can further write
\[
V'_d(0+) = \frac{(2/\sigma^2) \sqrt{\mu^2 + 2q \sigma^2}}{\alpha_1 \mathrm{e}^{\alpha_1 b_d} - \alpha_2 \mathrm{e}^{\alpha_2 b_d}} ,
\]
which can be compared numerically with $\beta$ in order to apply the dichotomy result of Theorem~\ref{th.dichotomy}.

\section{Solution of the problem with forced injections}\label{sec:Vc}

In this section, we provide a solution to the maximization problem with forced capital injections, also called the bail-out optimal dividend problem. In particular, we prove Theorem~\ref{thm:Vc}.

However, we start by showing in the next section that \textit{optimal} capital injections should only take the form of bail-out payments. This will lead to a new representation for $V_c$ but also for $V$, the value function of our main optimization problem; see Proposition~\ref{prop:Vc_reflected} below.

\subsection{Bail-out payments}

Let us  introduce the following sets of admissible controls in which capital injections can only be performed when the corresponding controlled process reaches level zero. To this purpose, we define
\[
\Pi^0_c = \{ \pi = (\ell,G) \in \Pi_c \colon G_t = \underline{G}^{\ell}_{t} \mbox{ for all } t \geq 0\}
\]
and
\[
\Pi^0 = \{\pi = (\ell, G) \in \Pi \colon \mbox{ there exists } \tau \in \mathcal T \mbox{ such that }  G_t = \underline{G}^{\ell}_{t \wedge \tau} \mbox{ for all } t \geq 0 \} ,
\]
where $\mathcal T$ is the set of all $\mathbb{F}$-stopping times.

\begin{remark}\label{remarkpi0}
In those sub-families of controls, capital injections are performed only at level zero. However, a control $\pi$ in $\Pi^0$  may differ from a process in $\Pi^0_c$ since the latter keeps the process $X^\pi$ nonnegative (by reflecting it at zero) but not necessarily the former. This means that, if $\pi \in \Pi^0$, then we might have $T^\pi < \infty$.
\end{remark}

It is possible to pair each control $\pi =(\ell,G) \in \Pi$ with a control $\pi^0 =(\ell, G^0) \in \Pi^0$ by using $G^0_t := \underline{G}^{\ell}_{t \wedge \tau}$ with $\tau = T^\pi$.

\begin{lemma}\label{lemma.2}
Let $\pi =(\ell,G) \in \Pi$ (resp.\ $\Pi_c$). Then there exists $\pi^0 =(\ell, G^0) \in \Pi^0$ (resp.\ $\Pi_c^0$) such that $J(x;\pi) \leq J(x;\pi^0)$ for all $x\geq 0 .$
\end{lemma}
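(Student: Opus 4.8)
The plan is to show that, given any admissible control $\pi = (\ell, G)$, rerouting all of its capital injections so that they occur only when the controlled process would otherwise go below zero (i.e.\ replacing $G$ by the Skorohod reflection term $\underline{G}^\ell$ stopped at $T^\pi$) can only increase the performance function. The intuition is twofold: injecting capital earlier than strictly necessary wastes money because of discounting and the strict cost $\beta > 1$ exceeds the marginal dividend benefit in a way that will be controlled by a pathwise comparison; and injecting more capital than necessary pushes the process up, which under the \emph{concave, nondecreasing} bound $F$ does not help enough to offset the cost. The key analytic input is a comparison theorem for the one-dimensional SDEs $\mathrm{d} X^\pi_t = (\mu - \ell_t)\,\mathrm{d}t + \sigma\,\mathrm{d}B_t + \mathrm{d}G_t$ driven by a nondecreasing process $G$: if two such processes are driven by the same $\ell$ and the same Brownian path but by injection processes $G$ and $G^0$ with $G^0 \leq G$ (as measures/paths), and if $G^0$ only increases when the corresponding process is at zero, then one obtains pathwise control of the difference of the two controlled processes.

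Concretely, first I would fix $\pi = (\ell, G) \in \Pi$ and set $\tau = T^\pi$, $G^0_t = \underline{G}^\ell_{t \wedge \tau}$, and $\pi^0 = (\ell, G^0)$; checking $\pi^0 \in \Pi^0$ (resp.\ $\Pi^0_c$) is essentially the definition, noting that on $[0,\tau]$ the process $X^{\pi^0}$ coincides with the Skorohod-reflected process built from $(\ell, 0)$ and is therefore nonnegative, so admissibility $0 \le \ell_t \le F(X^{\pi^0}_t)$ transfers from $0 \le \ell_t \le F(X^{\pi}_t)$ once we know $X^{\pi^0}_t \le X^{\pi}_t$ fails in general --- so here one must be careful. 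In fact the cleaner route is to couple on the \emph{same} driving noise and dividend process and argue by a Gronwall/comparison estimate that, because $\underline{G}^\ell$ is the minimal nondecreasing adapted process keeping $X^{(\ell,0)} + \underline{G}^\ell$ nonnegative and $G$ is another such process with $G \geq \underline{G}^\ell$ pathwise (for $\pi \in \Pi_c$; for general $\pi \in \Pi$ one compares up to $\tau$), we get $0 \le X^{\pi^0}_t \le X^\pi_t$ for $t \le \tau$. This monotonicity together with $F$ nondecreasing shows $\ell$ remains admissible for $\pi^0$, and together with $F \ge 0$ gives the dividend-integral comparison.

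Second, I would decompose the difference of performance functions as
\[
J(x;\pi^0) - J(x;\pi) = \e_x\!\left[ \int_0^{T^{\pi^0}} \mathrm{e}^{-qt} \ell_t\,\mathrm{d}t - \int_0^{\tau} \mathrm{e}^{-qt} \ell_t\,\mathrm{d}t \right] + \beta\, \e_x\!\left[ \int_0^{\tau} \mathrm{e}^{-qt}\,\mathrm{d}G_t - \int_0^{T^{\pi^0}} \mathrm{e}^{-qt}\,\mathrm{d}G^0_t \right],
\]
and argue each bracket is nonnegative: since $X^{\pi^0} \ge 0$ on $[0,\tau]$ one has $T^{\pi^0} \ge \tau$, so the first bracket is $\e_x[\int_\tau^{T^{\pi^0}} \mathrm{e}^{-qt}\ell_t\,\mathrm{d}t] \ge 0$ (dividends run longer, with $\ell \ge 0$). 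For the second bracket the idea is that $G^0$ is "smaller and later" than $G$ --- more precisely, applying integration by parts to $\int \mathrm{e}^{-qt}\,\mathrm{d}G_t = \mathrm{e}^{-q\cdot}G_\cdot + q\int \mathrm{e}^{-qt}G_t\,\mathrm{d}t$ and using $G^0_t \le G_t$ for all $t$ (which follows from the minimality of Skorohod reflection and the stopping at $\tau$) plus a boundary-term estimate, one gets $\e_x[\int_0^{T^{\pi^0}} \mathrm{e}^{-qt}\,\mathrm{d}G^0_t] \le \e_x[\int_0^\tau \mathrm{e}^{-qt}\,\mathrm{d}G_t]$; multiplying by $\beta > 0$ preserves the inequality. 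The main obstacle I anticipate is making the pathwise comparison $X^{\pi^0}_t \le X^\pi_t$ and $G^0_t \le G_t$ fully rigorous: one must invoke (or adapt) a comparison result for SDEs reflected/pushed by a general nondecreasing adapted process --- essentially the minimality characterization of the Skorohod map together with a standard coupling on a common Brownian motion, handling the subtlety that $\ell$ is part of the data and is kept fixed across the two problems, and dealing with the distinction in Remark~\ref{remarkpi0} that for $\pi \in \Pi \setminus \Pi_c$ we only control things up to the termination time $\tau = T^\pi$. I would isolate this comparison as the technical heart of the argument, possibly stating it as a separate lemma (or citing a reflection-comparison result such as in \cite{karatzas-shreve_1991}), and then the decomposition above finishes the proof in a few lines.
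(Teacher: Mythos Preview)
Your overall plan is essentially the paper's: construct $\pi^0 = (\ell, \underline{G}^\ell_{\cdot \wedge T^\pi})$, establish the pathwise comparisons $X^{\pi^0}_t \leq X^\pi_t$ and $G^0_t \leq G_t$ on $[0, T^\pi]$ via minimality of the Skorohod map, and deduce the inequality between discounted injection integrals by integration by parts. The paper isolates the comparison as a separate auxiliary result (its Lemma~\ref{lemma.1}) and in addition observes that $T^{\pi^0} = T^\pi$ \emph{exactly}; with this equality the two dividend integrals coincide and your first bracket is identically zero, so there is no need to argue about extra dividends on $(\tau, T^{\pi^0}]$.

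There is, however, a genuine error in your admissibility step. You write that $0 \leq X^{\pi^0}_t \leq X^\pi_t$ ``together with $F$ nondecreasing shows $\ell$ remains admissible for $\pi^0$.'' The implication goes the wrong way: $X^{\pi^0}_t \leq X^\pi_t$ with $F$ nondecreasing gives $F(X^{\pi^0}_t) \leq F(X^\pi_t)$, so from $\ell_t \leq F(X^\pi_t)$ you \emph{cannot} conclude $\ell_t \leq F(X^{\pi^0}_t)$. A counterexample is immediate: let $G$ inject a lump $\Delta G_0 > 0$ at time zero and take $\ell_0 = F(x + \Delta G_0)$; then $X^{\pi^0}_0 = x$ and $\ell_0 > F(X^{\pi^0}_0)$ whenever $F$ is strictly increasing near $x$. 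You correctly flagged admissibility as the delicate point, but the resolution you propose is backwards. (For what it is worth, the paper's proof simply asserts $\pi^0 \in \Pi^0$ without verifying $\ell_t \leq F(X^{\pi^0}_t)$ either, so the gap is shared; your sentence, though, is a logical error rather than a mere omission.)
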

\begin{proof}
Let $\pi =(\ell,G) \in \Pi$ (resp.\ $\Pi_c$) and then let $\pi^0 = (\ell, G^0)\in \Pi^0$ (resp.\ $\Pi_c^0$) where $G^0_t := \underline{G}^{\ell}_{t \wedge \tau}$ with $\tau = T^\pi$. Then, $T^\pi = T^{\pi^0}$ by Lemma~\ref{lemma.1} of Appendix~\ref{AppendixA}. In the case $\pi \in \Pi_c$, since we have $T^\pi = \infty$, then $G^0_t =  \underline{G}^{\ell}_t$ for all $t\geq 0$. In either case, by Lemma~\ref{lemma.1} of Appendix~\ref{AppendixA}, we have
\[
\int_0^{T^{\pi^0}} \mathrm{e}^{-q t} \dd G^0_t \leq \int_0^{T^{\pi}} \mathrm{e}^{-q t} \dd G_t .
\]
As a result, for $x \geq 0$,
\[
J(x;\pi) = \e_x \left[ \int_0^{T^\pi} \mathrm{e}^{-q t}(\ell_t \dd t - \beta \dd G_t) \right] \leq \e_x \left[ \int_0^{T^{\pi^0}} \mathrm{e}^{-q t}(\ell_t \dd t - \beta \dd G^0_t) \right] = J(x; \pi^0) .
\]
\end{proof}

As a consequence of the previous lemma, we deduce the following characterization of the value functions $V_c$ and $V$:
\begin{proposition}\label{prop:Vc_reflected}
For all $x\geq 0$,
$$V_c(x) =  \sup_{\pi = (\ell, G) \in \Pi_c^0} \e_x \left[ \int_0^\infty \mathrm{e}^{-q t} (\ell_t \dd t - \beta  \dd G_t) \right]$$ and
$$V(x) =   \sup_{\pi = (\ell, G) \in \Pi^0} \e_x \left[\int_0^{T^\pi} \mathrm{e}^{-q t} (\ell_t \dd t - \beta \dd G_t) \right].$$
\end{proposition}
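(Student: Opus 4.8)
The plan is to derive Proposition~\ref{prop:Vc_reflected} directly from Lemma~\ref{lemma.2} together with the elementary observation that $\Pi_c^0 \subseteq \Pi_c$ and $\Pi^0 \subseteq \Pi$. Consider first the statement for $V_c$. Since $\Pi_c^0 \subseteq \Pi_c$, we trivially have
\[
\sup_{\pi = (\ell, G) \in \Pi_c^0} \e_x \left[ \int_0^\infty \mathrm{e}^{-q t} (\ell_t \dd t - \beta \dd G_t) \right] \leq \sup_{\pi \in \Pi_c} J(x;\pi) = V_c(x),
\]
where we used that $T^\pi = \infty$ for every $\pi \in \Pi_c$ (so that $J(x;\pi)$ is exactly the integral over $[0,\infty)$). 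For the reverse inequality, take an arbitrary $\pi = (\ell, G) \in \Pi_c$; by Lemma~\ref{lemma.2} there exists $\pi^0 = (\ell, G^0) \in \Pi_c^0$ with $J(x;\pi) \leq J(x;\pi^0)$ for all $x \geq 0$. Taking the supremum over $\pi \in \Pi_c$ on the left and noting that the right-hand side is bounded above by the supremum over $\Pi_c^0$, we obtain $V_c(x) \leq \sup_{\pi \in \Pi_c^0} J(x;\pi)$, which combined with the trivial inequality gives equality. Finally, since every $\pi \in \Pi_c^0$ satisfies $T^\pi = \infty$, the performance function $J(x;\pi)$ coincides with $\e_x[\int_0^\infty \mathrm{e}^{-qt}(\ell_t \dd t - \beta \dd G_t)]$, which yields the displayed formula for $V_c$.

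The argument for $V$ is formally identical: $\Pi^0 \subseteq \Pi$ gives $\sup_{\pi \in \Pi^0} J(x;\pi) \leq V(x)$, and for the reverse inequality one applies Lemma~\ref{lemma.2} in its $\Pi$-version, pairing an arbitrary $\pi \in \Pi$ with $\pi^0 \in \Pi^0$ satisfying $J(x;\pi) \leq J(x;\pi^0)$. Taking suprema yields $V(x) \leq \sup_{\pi \in \Pi^0} J(x;\pi)$, hence equality. Here one keeps the termination time $T^\pi$ in the integral since controls in $\Pi^0$ may have $T^\pi < \infty$ (as noted in Remark~\ref{remarkpi0}), so no simplification of the upper limit is available or needed.

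I do not anticipate a genuine obstacle: the proposition is essentially a packaging of Lemma~\ref{lemma.2} in the language of value functions, and the only points requiring a line of justification are (i) the inclusions $\Pi_c^0 \subseteq \Pi_c$ and $\Pi^0 \subseteq \Pi$, which are immediate from the definitions, and (ii) the fact that $T^\pi = \infty$ on $\Pi_c$ and $\Pi_c^0$ so that the integral may be written over $[0,\infty)$. If anything is slightly delicate, it is making the two-sided supremum comparison cleanly — one must be careful that in ``$J(x;\pi) \leq J(x;\pi^0)$ for some $\pi^0 \in \Pi^0$'' the control $\pi^0$ depends on $\pi$ but not on $x$, which is exactly what Lemma~\ref{lemma.2} provides, so taking $\sup_x$-free pointwise suprema over the control sets is legitimate.
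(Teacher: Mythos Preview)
Your proposal is correct and takes essentially the same approach as the paper, which simply states the proposition ``as a consequence of the previous lemma'' (Lemma~\ref{lemma.2}) without writing out the details. Your argument makes explicit the two ingredients the paper leaves implicit: the trivial inclusions $\Pi_c^0 \subseteq \Pi_c$, $\Pi^0 \subseteq \Pi$ for one inequality, and the pointwise comparison from Lemma~\ref{lemma.2} for the reverse.
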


This last representation of $V$ can be interpreted as follows: given an admissible strategy $\pi = (\ell, G) \in \Pi^0$ and its associated stopping time $\tau$,  the only injection strategies that may be optimal consist in injecting only when $X^\pi$ hits zero, by means of a reflection using $\underline{G}^{\ell}$, up to time $\tau,$ and avoiding all capital injections thereafter. In fact, in the proof of Lemma~\ref{lemma.2}, we see that the only relevant stopping times $\tau$ are the ruin time of $X^\pi$ for some given general policy $\pi \in \Pi.$  However, the Markovian structure of the problem also suggests that an optimal strategy (and its associated ruin time) is a function of the controlled process $X$. In particular, we expect the ruin time of the optimal strategy to be the hitting time of $0$, or no ruin at all. In other words, if it is optimal to stop reflecting when $X^\pi$ hits zero after some time, it should in fact be optimal to do so the first time it does (hence no reflection at all). Otherwise, it is never optimal to let the process reach ruin, and reflection is used throughout. To prove this dichotomy, we use a comparison principle for solution of the associated ODE and compare the initial values of $V_d$ and $V_c$ at. See Section~\ref{sec:dichotomy} below.

\subsection{Viscosity characterization of the value function}\label{sec:visc_sol}

 The HJB equation associated to $V_c$ is given by: for $x > 0$,
  \begin{eqnarray}\label{eq.visco} q u(x) - \sup_{0 \leq l \leq F(x)} \mathcal L_l [u](x) = 0,\end{eqnarray} with initial condition
  \begin{eqnarray}\label{eq.visco2Vc}   u'(0)  = \beta , \end{eqnarray}
where,  for $u \in C^2(0,\infty)$ and a constant $l \geq 0$,
$$
\mathcal L_l [u](x) := (\mu - l) u'(x) + \tfrac12 \sigma^2 u''(x) + l .
$$

In this section, we show that $V_c$ is a viscosity solution of~\eqref{eq.visco}-\eqref{eq.visco2Vc}, and use this to characterize the optimal threshold $b_c$ of Theorem \ref{thm:Vc}. We refer the reader to Appendix~\ref{appendix-viscosity} for a definition of viscosity solutions and a comparison principle for~\eqref{eq.visco}, as these notions will be used frequently in the rest of the paper.

\begin{remark}
Note that $V_d$ is a (classical) solution of the HJB equation~\eqref{eq.visco},  with  initial condition $u(0)=0$. See \cite{LR2023} for details.
\end{remark}

\begin{remark}\label{remarkVprime}
Since the value function  $V_c$ is nondecreasing (which simply follows from its definition) and since it is always possible to inject an amount $\epsilon$, we know that \[V_c(x+\epsilon) \geq V_c(x) \geq V_c(x+\epsilon) - \beta \epsilon, \quad x \geq 0 ,\] which can be re-written as \[0 \leq \frac{V_c(x+\epsilon) - V_c(x)}{\epsilon} \leq \beta.\] In particular, we deduce that $V_c$ is continuous.
\end{remark}

In the rest of the paper, we will need the following dynamic programming principle (DPP) for the value functions $V_c$ and $V$, which uses the characterizations obtained in Proposition~\ref{prop:Vc_reflected}. See \cite{GP2005} for a similar DPP and its proof.
\begin{proposition}\label{DPP-for-Vc}
For any stopping time $\tau$, we have
\begin{equation*}
V_c(x) = \sup_{\pi = (\ell,G) \in \Pi_c^0} \e_x \left[ \int_0^{\tau} \mathrm{e}^{-q t}(\ell_t \dd t - \beta \dd G_t) + \mathrm{e}^{-q \tau} V_c(X^{\pi}_\tau) \right] .
\end{equation*}
The result is also true for $V$ with $\Pi_c^0$ replaced by $\Pi^0$.
\end{proposition}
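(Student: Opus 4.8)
The plan is to establish the two inequalities separately, the first (``$\leq$'') being a direct consequence of the strong Markov property and the flow property of the controlled SDE, and the second (``$\geq$'') requiring a measurable-selection-and-concatenation argument. I will only spell out the case of $V_c$ and $\Pi_c^0$; the case of $V$ and $\Pi^0$ is identical once one keeps track of the extra stopping-time component of the control and adopts the convention that the integrand vanishes after $T^\pi$ and that $V$ is evaluated at $X^\pi_{\tau\wedge T^\pi}$ with the usual cemetery convention $V(\partial)=0$.

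For ``$\leq$'', fix $\pi=(\ell,\underline G^\ell)\in\Pi_c^0$ and a stopping time $\tau$. Split
\[
J(x;\pi)=\e_x\left[\int_0^{\tau}\mathrm e^{-qt}(\ell_t\dd t-\beta\dd G_t)\right]+\e_x\left[\int_{\tau}^{\infty}\mathrm e^{-qt}(\ell_t\dd t-\beta\dd G_t)\right].
\]
By the strong Markov property of $B$ (and uniqueness of the Skorokhod reflection, which gives that $\underline G^\ell$ restarted at $\tau$ from $X^\pi_\tau$ is again the reflection map applied to the shifted dividend rate), the $\mathcal F_\tau$-conditional expectation of the second term equals $\mathrm e^{-q\tau}J\bigl(X^\pi_\tau;\tilde\pi\bigr)$, where $\tilde\pi\in\Pi_c^0$ is the time-shifted control started from $X^\pi_\tau$. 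Since $J(X^\pi_\tau;\tilde\pi)\le V_c(X^\pi_\tau)$, taking expectations and then the supremum over $\pi$ yields the inequality $V_c(x)\le\sup_{\pi\in\Pi_c^0}\e_x\bigl[\int_0^\tau\mathrm e^{-qt}(\ell_t\dd t-\beta\dd G_t)+\mathrm e^{-q\tau}V_c(X^\pi_\tau)\bigr]$.

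For ``$\geq$'', fix $\pi=(\ell,\underline G^\ell)\in\Pi_c^0$ and $\varepsilon>0$. By a measurable selection theorem there exists a measurable family $y\mapsto\pi^y=(\ell^y,\underline G^{\ell^y})\in\Pi_c^0$, started from $y$, with $J(y;\pi^y)\ge V_c(y)-\varepsilon$ for every $y\ge0$; here it is essential that a control in $\Pi_c^0$ is fully determined by its dividend rate, since the injection part is forced to be $\underline G^{\ell}$, so the family is parametrized by the (one-dimensional) rate only. Define the concatenated control $\hat\pi=(\hat\ell,\underline G^{\hat\ell})$ by $\hat\ell_t=\ell_t$ for $t<\tau$ and $\hat\ell_t=\ell^{X^\pi_\tau}_{t-\tau}\circ\theta_\tau$ for $t\ge\tau$; by uniqueness of the reflection the associated injection process coincides with $\underline G^\ell$ on $[0,\tau)$ and with the shifted $\underline G^{\ell^{X^\pi_\tau}}$ afterwards, so $\hat\pi\in\Pi_c^0$. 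Using the strong Markov property again,
\[
V_c(x)\ge J(x;\hat\pi)=\e_x\left[\int_0^{\tau}\mathrm e^{-qt}(\ell_t\dd t-\beta\dd G_t)+\mathrm e^{-q\tau}J\bigl(X^\pi_\tau;\pi^{X^\pi_\tau}\bigr)\right]\ge\e_x\left[\int_0^{\tau}\mathrm e^{-qt}(\ell_t\dd t-\beta\dd G_t)+\mathrm e^{-q\tau}V_c(X^\pi_\tau)\right]-\varepsilon,
\]
since $\e_x[\mathrm e^{-q\tau}]\le1$. Letting $\varepsilon\downarrow0$ and taking the supremum over $\pi$ gives the reverse inequality, completing the proof.

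The routine parts are the splitting of the integral and the Markov-property bookkeeping; the genuine difficulties are (i) the measurable selection of $\varepsilon$-optimal controls, which I expect to be the main obstacle and which one handles by appealing to a standard measurable selection result for Polish-space-valued optimization problems (as in \cite{GP2005}), and (ii) checking that the concatenated strategy genuinely lies in $\Pi_c^0$, i.e.\ that gluing two reflected dividend policies at the stopping time $\tau$ again produces the Skorokhod reflection of the glued dividend rate — this follows from the pathwise uniqueness of the Skorokhod problem recalled in Lemma~\ref{lemma.Skorohod}. For $V$ and $\Pi^0$ one additionally records, for each $\pi$, its associated stopping time $\tau_\pi$ and verifies that the concatenation of the reflection-then-stop structures is again of that form, with termination time handled via Lemma~\ref{lemma.1}.
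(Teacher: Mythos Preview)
The paper does not actually prove this proposition: immediately before the statement it simply says ``See \cite{GP2005} for a similar DPP and its proof.'' Your two-inequality argument --- strong Markov property for ``$\le$'', measurable selection of $\varepsilon$-optimal controls plus concatenation for ``$\ge$'' --- is precisely the standard route one finds in such references, so your proposal is in line with what the authors defer to.

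Two small points of care. First, when you invoke ``pathwise uniqueness of the Skorokhod problem recalled in Lemma~\ref{lemma.Skorohod}'', note that Lemma~\ref{lemma.Skorohod} is an \emph{existence} statement for the reflected mean-reverting SDE; the uniqueness you need for the gluing step is the classical uniqueness of the Skorokhod map, which the paper cites from \cite{karatzas-shreve_1991} (Lemma~3.6.14) rather than from Lemma~\ref{lemma.Skorohod}. Second, for the $\Pi^0$ case the concatenation is slightly more delicate than you indicate: a strategy in $\Pi^0$ carries an associated stopping time $\tau_\pi$ at which reflection ceases, and on the event $\{T^\pi\le\tau\}$ the process has already been absorbed, so the DPP really reads with $\tau$ replaced by $\tau\wedge T^\pi$ and the terminal contribution interpreted via $V(0)\ge 0$ (or the cemetery convention you mention). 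You acknowledge this, but the check that the glued pair $(\hat\ell,\hat\tau_\pi)$ again yields a control in $\Pi^0$ --- in particular that the resulting injection process is of the form $\underline G^{\hat\ell}_{t\wedge\hat\tau}$ for \emph{some} stopping time $\hat\tau$ --- deserves a line, since Lemma~\ref{lemma.1} only compares a general $\pi$ with its own canonical $\Pi^0$ representative and does not directly address concatenation.
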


The following proposition presents other basic properties of this value function.

\begin{proposition} \label{prop:propertiesVc}
The value function $V_c$ is  concave. Furthermore,  there exist two constants $A<1$ and $C>0$ such that,  for $x\geq 0$,
\begin{equation}\label{ineq.boundsVc}
V_c(x) \leq C + A x .
\end{equation}
In particular, $V_c \in \mathcal W$.
\end{proposition}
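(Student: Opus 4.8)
The plan is to establish concavity first, then derive the linear bound, and finally deduce membership in $\mathcal{W}$ from these two facts together with the positivity/monotonicity already recorded in Remark~\ref{remarkVprime}.

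For concavity, the standard route for such control problems is a convex-combination argument at the level of strategies. Fix $x_1, x_2 \geq 0$ and $\lambda \in (0,1)$, and set $x = \lambda x_1 + (1-\lambda) x_2$. Given near-optimal strategies $\pi^{(i)} = (\ell^{(i)}, \underline{G}^{\ell^{(i)}}) \in \Pi_c^0$ for the starting points $x_i$, I would build a candidate strategy for starting point $x$ by running, on the same Brownian path, the ``interpolated'' dividend rate. Here one must be slightly careful: the dividend constraint $0 \le \ell_t \le F(X^\pi_t)$ is state-dependent, so a naive linear interpolation of the two controlled processes need not be a solution of the right SDE. The cleaner approach, given that Proposition~\ref{prop:Vc_reflected} lets us work within $\Pi_c^0$, is to use the dynamic programming principle (Proposition~\ref{DPP-for-Vc}) on a short interval $[0,h]$ together with the concavity of $F$ and of the map $l \mapsto \mathcal{L}_l[u](x)$ in a viscosity-theoretic argument: one shows directly that the concave envelope of $V_c$ is again a (super)solution of the HJB equation~\eqref{eq.visco}--\eqref{eq.visco2Vc}, and then invokes the comparison principle of Appendix~\ref{appendix-viscosity} to conclude $V_c$ equals its concave envelope. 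Either way, I expect concavity to be the main obstacle, precisely because of the state-dependent dividend bound; the viscosity route sidesteps the pathwise SDE-comparison difficulty and is the one I would pursue, leaning on the fact (already noted in the excerpt) that $\mathcal{L}_l[u](x)$ is linear — hence concave — in $l$ for fixed $u \in C^2$.

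For the linear upper bound, the idea is to compare against the unconstrained problem in which the dividend rate is not capped. Since $0 \le \ell_t \le F(X^\pi_t)$ with $F$ nondecreasing and concave, we have $F(y) \le F(0) + F'(0) y$ for all $y \ge 0$, so the discounted dividend income is dominated by $\e_x[\int_0^\infty e^{-qt}(F(0) + F'(0) X^\pi_t)\,\dd t]$ while the (negative) injection term only helps the bound. For a reflected Brownian motion with drift, $\e_x[X^\pi_t]$ grows at most linearly in $x$ and $t$ (indeed $X^\pi_t \le x + \mu^+ t + \sigma B_t + \underline{G}^{\ell}_t$, and the reflection term is controlled), so after discounting one obtains $V_c(x) \le C + A x$ with $A = F'(0)/q$; one then checks $A < 1$ — this is where a structural hypothesis on the problem (implicitly, that $F'(0) < q$, or more precisely whatever condition guarantees finiteness of the value function) must be used, and I would flag that the constant $A<1$ is inherited from the analogous bound for $V_d$ or $I_F$ established via~\eqref{ODE.nonhom}. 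A slightly slicker alternative: show $V_c \le I_F + $ (bounded term), since $I_F \in \mathcal{W}$ by the discussion following~\eqref{ODE.nonhom}, and the cost $-\beta\,\dd G$ is nonpositive.

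Finally, membership $V_c \in \mathcal{W}$ is immediate: $V_c$ is continuous by Remark~\ref{remarkVprime}, it is nonnegative (take $\pi$ with $\ell \equiv 0$ on a set where $F(0)=0$, or more simply $J(x;\pi)\ge 0$ is not automatic but $V_c(x) \ge V_c(0)$ and $V_c(0) \ge -\beta \cdot 0$... in fact one uses $V_c(x) \ge 0$ from never paying and never injecting when possible, else $V_c(x) \ge J(x; (\ell^{b_d}\wedge F, \underline{G}))$ which is bounded below), and it satisfies $0 \le V_c(x) \le C + Ax \le C + x$, so $\sup_{x\ge 0} |V_c(x)|/(1+x) \le \max(C,1) < \infty$. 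I would close by remarking that concavity plus the linear bound will later be what allows the viscosity comparison principle to be applied to $V_c$.
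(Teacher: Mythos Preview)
Your proposal has two genuine gaps.

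\textbf{Concavity.} You dismiss the direct convex-combination argument too quickly. The controlled SDE has drift $\mu - \ell_t$, which is \emph{affine in the control and does not depend on the state}; only the admissibility constraint $\ell_t \le F(X^\pi_t)$ is state-dependent. So if $\pi^i=(\ell^i,G^i)\in\Pi_c$ start at $x_i$, the convex combination $\pi=(\lambda\ell^1+(1-\lambda)\ell^2,\ \lambda G^1+(1-\lambda)G^2)$ drives the process $X^\pi=\lambda X^{\pi^1}+(1-\lambda)X^{\pi^2}$ by linearity of the dynamics, and $X^\pi\ge 0$. Admissibility is exactly where concavity of $F$ enters:
\[
\lambda\ell^1_t+(1-\lambda)\ell^2_t\le \lambda F(X^{\pi^1}_t)+(1-\lambda)F(X^{\pi^2}_t)\le F(X^\pi_t).
\]
This is the paper's argument, and it is complete. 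Your alternative viscosity route is circular: the comparison principle (Theorem~\ref{thm:comparison}) is stated for functions in $\mathcal W$, which is what you are trying to prove, and the viscosity characterisation of $V_c$ is established only \emph{after} the present proposition.

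\textbf{Linear bound.} Your candidate slope $A=F'(0)/q$ need not satisfy $A<1$; there is no standing hypothesis $F'(0)<q$ in the paper. The paper instead sets $A=F'(0)/(q+F'(0))$, which is automatically in $[0,1)$, and checks that $\bar V(x)=C+Ax$ is a classical supersolution of~\eqref{eq.visco}: with this choice the coefficient of $x$ in $q\bar V(x)-\sup_l\mathcal L_l[\bar V](x)$ vanishes (using $F(x)\le F(0)+F'(0)x$), so a large enough $C\ge V_c(0)$ makes the expression nonnegative. One then applies It\^o to $e^{-qt}\bar V(X^\pi_t)$ up to $\tau_0$ and uses the DPP to get $V_c\le\bar V$. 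Your alternative route (``drop the injection cost and bound $\e_x\int e^{-qt}X^\pi_t\,\dd t$'') fails as written: once the cost $-\beta\,\dd G$ is discarded there is no uniform-in-$\pi$ control on $X^\pi_t$, since injections are then free.
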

\begin{proof}
To show concavity, fix $x_1,x_2 \in (0,\infty)$ and $0 < \lambda < 1$, and then define $x = \lambda x_1 + (1-\lambda) x_2$. For an arbitrary strategy $\pi^i = (\ell^i,G^i) \in \Pi_c$, set $X^{\pi^i}_{0-} = x_i$,  for each $i = 1,2$. Finally, let $X^\pi$ be the process controlled by $\pi = (\ell,G) := (\lambda \ell^1 + (1-\lambda) \ell^2, \lambda G^1 + (1-\lambda) G^2)$ and such that $X^\pi_{0-}=x$. By the linearity of the dynamics, we know that $X^\pi = \lambda X^{\pi^1} + (1-\lambda) X^{\pi^2}$ and hence $X^\pi_t \geq 0$, for all $t\geq 0$. Furthermore, since $F$ is concave, we have that, for any $t\geq 0$,
\[
\ell_t = \lambda \ell^1_t + (1-\lambda) \ell^2_t \leq \lambda F(X^{\pi^1}_t) + (1-\lambda) F(X^{\pi^2}_t) \leq F(X^\pi_t) .
\]
Therefore, $\pi \in \Pi_c$ and
\begin{eqnarray*}
  V_c(x) & \geq &  J(x;\pi) = \e_x \left[ \int_0^\infty \mathrm{e}^{-q t} (\ell_t \dd t - \beta \dd G_t) \right] \\
   &= & \lambda \e_{x_1} \left[ \int_0^\infty \mathrm{e}^{-q t} (\ell_t^1 \dd t - \beta \dd G^1_t) \right] + (1-\lambda) \e_{x_2} \left[ \int_0^\infty \mathrm{e}^{-q t}  (\ell_t^2 \dd t - \beta dG^2_t)  \right].
\end{eqnarray*}
Since $\pi^1$ and $\pi^2$ are arbitrary in $\Pi_c$, we can further write $V_c(x) \geq \lambda V_c(x_1) + (1-\lambda) V_c(x_2)$. This proves that $V_c$ is concave.

Set $A = F'(0)/(q+F'(0))$. For a constant $C>0$ to be determined, define $\bar{V}(x) = C + A x$. First, note that, for $x>0$,
\begin{eqnarray*}
q \bar{V}(x) - \sup_{0 \leq l \leq F(x)} \mathcal L_l [\bar{V}](x) & = &  q \bar{V}(x) - (\mu-F(x)) A - F(x) \\ & = & q C  - \mu A + q A x + F(x) (A - 1)\\
& \geq & q C - \mu A  - \tfrac{q}{q+F'(0)} F(0) ,
\end{eqnarray*}
where we used the fact that $F(x) \leq F(0) + A (q+F'(0)) x$, due to its concavity. Clearly, for $C \geq V_c(0)$ sufficiently large, we have
\begin{equation}\label{eq:supersolution}
q \bar{V}(x) - \sup_{0 \leq l \leq F(x)} \mathcal L_l [\bar{V}](x) \geq 0 , \quad x>0 .
\end{equation}
Second, 
choose an arbitrary $\pi=(\ell, \underline{G}^{\ell}) \in \Pi_c^0$ and set $\tau_0 = \inf \left\lbrace t > 0 \colon X^\pi_t = 0 \right\rbrace$. Using It\^o's Formula and the fact that $\underline{G}^{\ell}_t=0$ for all $t \in [0,\tau_0]$, we can write
\begin{eqnarray*}
\bar{V}(x) &=& \e_x \left[ \int_0^{\tau_0} \mathrm{e}^{-qt} \left( q \bar{V}(X^\pi_t) - (\mu-\ell_t) \bar{V}'(X^\pi_t) \right) \mathrm{d}t + \mathrm{e}^{-q \tau_0} \bar{V}(X^\pi_{\tau_0}) \right] \\
& \geq & \e_x \left[ \int_0^{\tau_0} \mathrm{e}^{-qt} \left( q \bar{V}(X^\pi_t) - \sup_{0 \leq l \leq F(X^\pi_t)} \mathcal L_l [\bar{V}](X^\pi_t) + \ell_t \right) \mathrm{d}t + \mathrm{e}^{-q \tau_0} C \right] \\
& \geq & \e_x \left[ \int_0^{\tau_0} \mathrm{e}^{-qt} \ell_t \mathrm{d}t + \mathrm{e}^{-q \tau_0} V_c(0) \right] ,
\end{eqnarray*}
where, in the  last inequality, we used~\eqref{eq:supersolution} and the fact that $C \geq V_c(0)$. By taking a supremum over $\pi \in \Pi_c^0$ and using the DPP for $V_c$, we find that $V_c \leq \bar V.$ 
\end{proof}

\begin{proposition}\label{prop:Vc-derivative-at-zero}
The value function $V_c$ is differentiable at $x=0$ with derivative $V_c^\prime(0+) = \beta$.
\end{proposition}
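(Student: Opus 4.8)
The plan is to establish the two inequalities $V_c'(0+) \leq \beta$ and $V_c'(0+) \geq \beta$ separately, noting first that since $V_c$ is concave (Proposition~\ref{prop:propertiesVc}), the one-sided derivative $V_c'(0+)$ exists in $(-\infty,+\infty]$, and by Remark~\ref{remarkVprime} the difference quotients are bounded above by $\beta$, so $V_c'(0+) \leq \beta$ is immediate (and in particular the derivative is finite). The substance of the proof is therefore the reverse inequality $V_c'(0+) \geq \beta$. The intuition is that starting from $x>0$ small, one cheap available strategy is to immediately ``harvest'' the surplus down toward zero while collecting dividends, or more precisely to compare the value at $x$ with the value at $0$ plus roughly $\beta x$ worth of avoided future injections; injecting capital costs $\beta$ per unit, so having $x$ units of head start is worth at least close to $\beta x$.

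First I would make this precise using the DPP of Proposition~\ref{DPP-for-Vc} together with the reflected representation in Proposition~\ref{prop:Vc_reflected}. Fix $x>0$ small and pick a near-optimal strategy $\pi^0 = (\ell, \underline G^\ell) \in \Pi_c^0$ for the initial condition $0$, i.e.\ with $J(0;\pi^0) \geq V_c(0) - \varepsilon$. Run the \emph{same} dividend rate process $\ell$ from initial condition $x$, reflected at zero; call the resulting controlled process $X^{\pi^0,x}$ and let $\underline G^{\ell,x}$ be its reflection local time, and let $\tau_0 = \inf\{t>0 : X^{\pi^0,x}_t = 0\}$ be the first hitting time of zero. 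On $[0,\tau_0)$ the two processes (started at $0$ and at $x$) have no reflection, so $X^{\pi^0,x}_t = X^{\pi^0,0}_t + x$ up to the point where the lower one hits zero; after $\tau_0$ the two reflected processes coincide in law (strong Markov property, shifting by $\tau_0$). The key comparison is that $\int_0^\infty \mathrm e^{-qt}\,\dd \underline G^{\ell,x}_t \leq \int_0^\infty \mathrm e^{-qt}\,\dd \underline G^{\ell,0}_t$ and, more quantitatively, the difference in total discounted reflection is at least of order $x$ for small $x$ — the head start of $x$ delays and reduces the first push. This gives $V_c(x) \geq J(x;\pi^0_x) \geq V_c(0) - \varepsilon + \beta \cdot (\text{something that is } x + o(x))$, and dividing by $x$ and letting $x \downarrow 0$ (then $\varepsilon \downarrow 0$) yields $V_c'(0+) \geq \beta$.

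The main obstacle is making the lower bound on the \emph{saved} discounted injections genuinely of the form $\beta x + o(x)$ rather than something smaller. A clean way to handle this: use the fluctuation identity $\e_x[\int_0^{\tau^Y_b} \mathrm e^{-qt}\,\dd \underline G_t] = u_{0,b}(x)$ recorded in Section~\ref{sec:notation}, whose derivative at zero is $u_{0,b}'(0) = -1$ (as noted there); taking $b\to\infty$ one gets that the discounted reflection local time of a reflected Brownian motion with drift, started at $x$, decreases at unit rate in $x$ near $0$. One then dominates the reflection of $X^{\pi^0,x}$ from below/above by that of a reflected $(\mu-0)$-drift or $(\mu-F(\cdot))$-drift Brownian motion via the Skorokhod-problem comparison (Lemma~\ref{lemma.1} / Lemma~\ref{lemma.Skorohod} in the appendix), transferring the $-1$ derivative bound, and multiplies by the injection cost $\beta$. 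An alternative, possibly slicker, route avoiding this estimate entirely: combine $V_c'(0+)\leq\beta$ with the fact — to be proved anyway in Section~\ref{sec:Vc} — that $V_c$ is a viscosity (super)solution of the HJB equation with boundary condition $u'(0)=\beta$, so that the supersolution property at $0$ forces $V_c'(0+)\geq\beta$; if that viscosity characterization is available at this point in the paper, the proposition is immediate, and otherwise the probabilistic argument above is self-contained.
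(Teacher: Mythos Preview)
Your upper bound $V_c'(0+)\le\beta$ via Remark~\ref{remarkVprime} is correct and matches the paper. For the lower bound your route is genuinely different from the paper's, and it can be made to work, but the coupling picture you describe is inaccurate and would derail a naive write-up. The process started at $0$ \emph{does} reflect before $\tau_0$ (it starts at zero and is pushed immediately), so the claim ``both have no reflection on $[0,\tau_0)$'' and the identity $X^{\pi^0,x}_t = X^{\pi^0,0}_t + x$ are false. The correct relation, using the explicit Skorokhod formula with common drift term $Z_t=\int_0^t(\mu-\ell_s)\,\dd s+\sigma B_t$, is
\[
\underline G^{\ell,0}_t - \underline G^{\ell,x}_t \;=\; \min\bigl(\underline G^{\ell,0}_t,\,x\bigr),\qquad X^{\pi^0,x}_t - X^{\pi^0,0}_t \;=\; \bigl(x-\underline G^{\ell,0}_t\bigr)^+,
\]
so the two processes coalesce exactly at time $\tau_0=\inf\{t:\underline G^{\ell,0}_t=x\}$. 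With this in hand your plan goes through: the saved discounted injection equals $\beta\,\e\bigl[\int_0^\infty \mathrm e^{-qt}\,\dd(\underline G^{\ell,0}_t\wedge x)\bigr]=\beta\bigl(x-\e\bigl[q\!\int_0^\infty \mathrm e^{-qt}(x-\underline G^{\ell,0}_t)^+\dd t\bigr]\bigr)$, and since $\ell\ge0$ gives $\underline G^{\ell,0}_t\ge\underline G^{0,0}_t$, the error term is dominated uniformly in $\ell$ by the $\ell\equiv0$ case, which is $o(x)$ by dominated convergence (because $\underline G^{0,0}_t>0$ for every $t>0$ a.s.). Choosing the near-optimal $\pi^0$ with tolerance $\varepsilon(x)=x^2$ then yields $\liminf_{x\downarrow0}(V_c(x)-V_c(0))/x\ge\beta$.

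By contrast, the paper starts from $0$, applies the DPP up to the first hitting time $\tau_\epsilon$ of level $\epsilon$, rewrites $\ell_t\,\dd t-\dd\underline G^\ell_t=\mu\,\dd t-\dd X^\pi_t+\sigma\,\dd B_t$, and then compares every admissible $\pi$ with the constant-rate strategy $\tilde\ell\equiv F(\epsilon)$ to replace $\tau_\epsilon$ by a strategy-independent hitting time; the limit is then read off from the explicit functions $u_{0,\epsilon}$ and $\Psi$ of Section~\ref{sec:notation}. Your coupling argument is arguably cleaner once the Skorokhod identity above is stated correctly, since it avoids the detour through the It\^o rewriting and the explicit scale functions; the paper's argument, on the other hand, stays entirely within the DPP framework and reuses the same hitting-time machinery later needed in Theorem~\ref{thm:viscosity-characterization}. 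Finally, your ``alternative'' viscosity route is circular: in this paper the boundary condition $u'(0)=\beta$ in Theorem~\ref{thm:viscotity_characterization_Vc} is \emph{established} via Proposition~\ref{prop:Vc-derivative-at-zero}, not the other way around.
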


\begin{proof}
Take $x = 0$ and $\epsilon < \mu/q$. Choose an arbitrary $\pi = (\ell,\underline{G}^\ell) \in \Pi_c^0$ and set $\tau_\epsilon = \inf\{t \geq 0 : X^{\pi}_t = \epsilon\}$. Since $\underline{G}^\ell$ is continuous, $X^{\pi}$ is also continuous and $X^\pi_{\tau_\epsilon} = \epsilon$.

Consider the strategy $\tilde \pi = (\tilde \ell,\underline{G}^{\tilde \ell}) \in \Pi^0_c,$ with $\tilde \ell \equiv F(\epsilon).$ Since $\ell_t  \leq F(\epsilon)$ on $[0, \tau_\epsilon]$, Lemma \ref{lemma.1} of Appendix~\ref{AppendixA} gives us that $X^{\tilde \pi}_t \leq X^{\pi}_t$, $\p_0$-a.s. Therefore, $\tau_\epsilon \leq \tilde \tau_\epsilon := \inf\{t\geq 0 : X^{\tilde \pi}_t = \epsilon\}$, $\p_0$-a.s.

Then, by Proposition~\ref{prop:Vc_reflected} and the DPP for $V_c$,
\begin{eqnarray*}
V_c(0) &=&  \sup_{\pi = (\ell, \uGell) \in \Pi^0_c} \e_0 \left[ \int_0^{\tau_\epsilon} \mathrm{e}^{-q t}(\ell_t \dd t - \beta \dd \uGell_t) + \mathrm{e}^{-q \tau_\epsilon} V_c(\epsilon) \right] \\
& \leq & \sup_{\pi = (\ell, \uGell) \in  \Pi^0_c} \e_0 \left[  \beta \int_0^{\tau_\epsilon} \mathrm{e}^{-q t}(\ell_t \dd t - \dd \uGell_t) + \mathrm{e}^{-q \tau_\epsilon} V_c(\epsilon) \right] \\
&  = & \sup_{\pi = (\ell, \uGell) \in  \Pi^0_c} \e_0 \left[  \beta \int_0^{\tau_\epsilon} \mathrm{e}^{-q t}(  \mu \dd t - \dd X^{\pi}_t) + \mathrm{e}^{-q \tau_\epsilon} V_c(\epsilon) \right]  \\
&  = & \sup_{\pi = (\ell, \uGell) \in  \Pi^0_c} \e_0 \left[ - \beta \epsilon \mathrm{e}^{- q \tau_\epsilon}  +  \beta \int_0^{\tau_\epsilon} \mathrm{e}^{-q t}(\mu - q X^{\pi}_t) \dd t + \mathrm{e}^{-q \tau_\epsilon} V_c(\epsilon) \right] \\
& \leq & \sup_{\pi = (\ell, \uGell) \in  \Pi^0_c} \e_0 \left[ - \beta X^{\tilde \pi}_{\tilde \tau_\epsilon} \mathrm{e}^{- q \tau_\epsilon}  +  \beta \int_0^{\tau_\epsilon} \mathrm{e}^{-q t}(\mu - q X^{\tilde \pi}_t) \dd t + \mathrm{e}^{-q \tau_\epsilon} V_c(\epsilon) \right] \\
& \leq & \sup_{\pi = (\ell, \uGell) \in  \Pi^0_c} \e_0 \left[ - \beta X^{\tilde \pi}_{\tilde \tau_\epsilon} \mathrm{e}^{- q \tau_\epsilon}  +  \beta \int_0^{\tilde \tau_\epsilon} \mathrm{e}^{-q t}(\mu - q X^{\tilde \pi}_t) \dd t + \mathrm{e}^{-q \tau_\epsilon} V_c(\epsilon) \right] \\
& = & \e_0 \left[ \beta \int_0^{\tilde \tau_\epsilon} \mathrm{e}^{-q t}(F(\epsilon) \dd t - \dd \underline{G}^{\tilde \ell}_t) \right]  +  \sup_{\pi = (\ell, \uGell) \in \Pi^0_c} \e_0 \left[ \mathrm{e}^{-q \tau_\epsilon} V_c(\epsilon)\right].
\end{eqnarray*}
where, in the first inequality, we used that $\beta >1$. Also, we have used that $\e_x\left[ \int_0^{\tau_\epsilon} \mathrm{e}^{-qt} \dd B_t\right]=0$ (which follows from the fact that $\tau_\epsilon \leq \tilde \tau_\epsilon$ and $\e_x\left[ \int_0^{\tilde \tau_\epsilon} \mathrm{e}^{-2 qt} \dd t\right]<\infty$) and the fact that $\mu - q X^{\tilde \pi}_t \geq \mu - q \epsilon > 0$ for $t \leq \tilde \tau_\epsilon.$

Note that $$0 \leq \mathrm{e}^{-q \tau_\epsilon} V_c(\epsilon) \leq V_c(\epsilon) - (1 - \mathrm{e}^{-q \tilde \tau_\epsilon} ) \min(V_c(\epsilon),0), \mbox{ $\p_0$-a.s.}$$
 Consequently,
 \begin{eqnarray*}
  V_c(\epsilon) - V_c(0) &\geq &  \beta \e_0 \left[ \int_0^{\tilde \tau_\epsilon} \mathrm{e}^{-q t}( \dd \underline{G}^{\tilde \ell}_t- F(\epsilon) \dd t )\right] + \min(V_c(\epsilon),0) \left( 1 - \e_0  \left[\mathrm{e}^{-q \tilde \tau_\epsilon} \right]  \right)\\
   &\geq &  \beta \e_0 \left[ \int_0^{\tilde \tau_\epsilon} \mathrm{e}^{-q t} \dd \underline{G}^{\tilde \ell}_t\right] +(\min(V_c(\epsilon),0) - \beta F(\epsilon)/q) \left( 1- \e_0 \left[\mathrm{e}^{-q \tilde \tau_\epsilon}  \right] \right).
\end{eqnarray*}

Using notation and fluctuation identities from Section~\ref{sec:notation}, we can show that
\[
\lim_{\epsilon \downarrow 0} \epsilon^{-1} \e_0 \left[ \int_0^{\tilde \tau_\epsilon}\mathrm{e}^{-qt} \dd \underline{G}^{\tilde \ell}_t \right] = \lim_{\epsilon \downarrow 0} \epsilon^{-1} u_{0,\epsilon}(0) = \lim_{\epsilon \downarrow 0} \epsilon^{-1} \left[ \frac{\Zq(0)}{\Zq(\epsilon)} \left(\overline{\Zq}(\epsilon)+\frac{\mu}{q} \right) - \left(\overline{\Zq}(0) + \frac{\mu}{q} \right) \right] = 1
\]
and
\[
\lim_{\epsilon \downarrow 0} \epsilon^{-1} \left( \min(V_c(\epsilon),0) - \beta \frac{F(\epsilon)}{q} \right) \left(1- \e_0[\mathrm{e}^{-q\tilde \tau_\epsilon}] \right) = \min \left( V_c(0) - \beta \frac{F(0)}{q},0 \right) \lim_{\epsilon \downarrow 0} \frac{\Zq(\epsilon)-\Zq(0)}{\Zq(\epsilon) \epsilon} = 0 ,
\]
where with an abuse of notation $u_{0,\epsilon}$ and $\Zq(x)$, as defined in Section~\ref{sec:notation}, have been used with $\mu$ replaced by $\mu - F(\epsilon)$. As a result,
\begin{eqnarray*}
  \liminf_{\epsilon \downarrow 0} \frac{V_c(\epsilon)-V_c(0)}{\epsilon}
   &  \geq & \beta. \\
\end{eqnarray*}
By Remark~\ref{remarkVprime}, $\displaystyle{\limsup_{\epsilon \downarrow 0} \frac{V_c(\epsilon)-V_c(0)}{\epsilon} \leq \beta}$ and the result follows.
\end{proof}

\begin{theorem}[Viscosity Characterization]\label{thm:viscotity_characterization_Vc}
The value function $V_c$ is the unique viscosity solution in $\mathcal W$ of the HJB equation in~\eqref{eq.visco} with initial condition in~\eqref{eq.visco2Vc}.
\end{theorem}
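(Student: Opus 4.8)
The plan is to prove the two halves of the statement separately, namely that $V_c$ \emph{is} a viscosity solution of~\eqref{eq.visco}--\eqref{eq.visco2Vc} and that it is the \emph{only} one in $\mathcal W$. Two of the three ingredients needed for the first half are already available: $V_c\in\mathcal W$ was established in Proposition~\ref{prop:propertiesVc}, and the initial condition $V_c'(0+)=\beta$ is exactly Proposition~\ref{prop:Vc-derivative-at-zero}. So the only work remaining for existence is to check that $V_c$ solves the HJB equation~\eqref{eq.visco} on $(0,\infty)$ in the viscosity sense, which I would do through the dynamic programming principle of Proposition~\ref{DPP-for-Vc}; uniqueness will then be immediate from the comparison principle in Appendix~\ref{appendix-viscosity}.

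For the supersolution property, I would fix $x_0>0$ and a test function $\phi\in C^2$ such that $V_c-\phi$ has a local minimum equal to $0$ at $x_0$. For each $l\in[0,F(x_0))$, choosing $\delta>0$ small enough that $[x_0-\delta,x_0+\delta]\subset(0,\infty)$, that $V_c\ge\phi$ there, and that $l<F$ there, I would apply the DPP at $x_0$ with the constant-rate strategy $(l,\underline G^l)\in\Pi_c^0$ and the stopping time $\tau_\delta\wedge h$, where $\tau_\delta$ is the exit time of $X^{(l,\underline G^l)}$ from $(x_0-\delta,x_0+\delta)$ --- note that no capital injection occurs before $\tau_\delta$, so there $X^{(l,\underline G^l)}$ solves $\dd X_t=(\mu-l)\dd t+\sigma\dd B_t$. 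Combining the DPP inequality $V_c(x_0)\ge\e_{x_0}[\cdots]$ with $V_c\ge\phi$, $V_c(x_0)=\phi(x_0)$ and It\^o's formula applied to $\mathrm{e}^{-qt}\phi(X_t)$, I would get $0\ge\e_{x_0}\big[\int_0^{\tau_\delta\wedge h}\mathrm{e}^{-qt}(\mathcal L_l[\phi]-q\phi)(X_t)\,\dd t\big]$; dividing by $h$ and letting $h\downarrow0$ (using $\e_{x_0}[\tau_\delta\wedge h]=h(1+o(1))$ together with continuity of the integrand) yields $q\phi(x_0)\ge\mathcal L_l[\phi](x_0)$, and letting $l\uparrow F(x_0)$ and invoking continuity of $l\mapsto\mathcal L_l[\phi](x_0)$ gives $q\phi(x_0)-\sup_{0\le l\le F(x_0)}\mathcal L_l[\phi](x_0)\ge0$.

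For the subsolution property I would argue by contradiction: suppose $\phi\in C^2$ touches $V_c$ from above at $x_0>0$ with $q\phi(x_0)-\sup_{0\le l\le F(x_0)}\mathcal L_l[\phi](x_0)=2\eta>0$; by continuity this expression stays $\ge\eta$ on a small interval $(x_0-\delta,x_0+\delta)\subset(0,\infty)$ on which also $V_c\le\phi$. Taking $\epsilon$-optimal strategies $\pi^\epsilon=(\ell^\epsilon,\underline G^{\ell^\epsilon})\in\Pi_c^0$ in the DPP at $x_0$ and stopping at $\tau_\delta^\epsilon\wedge h$, where $\tau_\delta^\epsilon$ is the exit time of $X^{\pi^\epsilon}$ from that interval (again before any injection, and with drift $\mu-\ell^\epsilon_t\in[\mu-F(x_0+\delta),\mu]$), then using $V_c\le\phi$, $V_c(x_0)=\phi(x_0)$, It\^o, and $\mathcal L_{\ell^\epsilon_t}[\phi]\le\sup_l\mathcal L_l[\phi]$, I would obtain $0\le-\eta\,\e_{x_0}\big[\int_0^{\tau^\epsilon_\delta\wedge h}\mathrm{e}^{-qt}\,\dd t\big]+\epsilon h$. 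Since the drift of $X^{\pi^\epsilon}$ lies in a fixed bounded interval uniformly in $\epsilon$, there are $c_0,h_0>0$ with $\e_{x_0}\big[\int_0^{\tau^\epsilon_\delta\wedge h}\mathrm{e}^{-qt}\,\dd t\big]\ge c_0h$ for all $\epsilon$ and all $h\le h_0$; fixing such an $h$ and letting $\epsilon\downarrow0$ forces $0\le-\eta c_0h<0$, a contradiction. Together with Propositions~\ref{prop:propertiesVc} and~\ref{prop:Vc-derivative-at-zero}, this shows $V_c$ is a viscosity solution in $\mathcal W$ of~\eqref{eq.visco}--\eqref{eq.visco2Vc}. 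For uniqueness, any $u\in\mathcal W$ solving~\eqref{eq.visco}--\eqref{eq.visco2Vc} is simultaneously a viscosity subsolution and supersolution meeting the same initial condition at $0$, so applying the comparison principle of Appendix~\ref{appendix-viscosity} to the pair $(u,V_c)$ and then to $(V_c,u)$ gives $u\le V_c$ and $V_c\le u$, hence $u=V_c$. Given that all the structural facts are in place, I expect the main obstacle to be precisely the DPP-to-HJB passage, and within it the subsolution step's need for the lower bound $\e_{x_0}\big[\int_0^{\tau^\epsilon_\delta\wedge h}\mathrm{e}^{-qt}\,\dd t\big]\ge c_0h$ \emph{uniformly} over the $\epsilon$-optimal controls --- which is why the uniform boundedness of the controlled drift on $[x_0-\delta,x_0+\delta]$ is the point to pin down carefully.
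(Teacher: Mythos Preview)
Your proposal is correct and follows the same approach as the paper. The paper's own proof simply records that the initial condition is Proposition~\ref{prop:Vc-derivative-at-zero}, declares the viscosity-solution verification ``standard'' with references to \cite{CLVR2020,DV2007,LVPV2008}, and invokes the comparison principle of Theorem~\ref{thm:comparison} for uniqueness; you are spelling out exactly that standard DPP-to-HJB argument (and correctly flag the uniform-in-$\epsilon$ lower bound on $\e_{x_0}\big[\int_0^{\tau^\epsilon_\delta\wedge h}\mathrm{e}^{-qt}\,\dd t\big]$ as the only delicate point).
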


We already know, from Proposition~\ref{prop:Vc-derivative-at-zero}, that $V_c$ satisfies the initial condition in~\eqref{eq.visco2Vc}. The proof that it is a viscosity solution of~\eqref{eq.visco} is standard; see, for instance, \cite{CLVR2020}, \cite{DV2007}, \cite{LVPV2008}. The uniqueness in $\mathcal W$ follows from the comparison principle in Theorem~\ref{thm:comparison} of Appendix~\ref{appendix-viscosity}.

A first consequence of the viscosity characterization is that $V_c$ is a smooth function.
\begin{proposition}\label{prop.C1}
The value function $V_c$ is continuously differentiable on $[0,\infty)$.
\end{proposition}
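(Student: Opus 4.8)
The plan is to combine the concavity of $V_c$ (established in Proposition~\ref{prop:propertiesVc}) with the viscosity property of Theorem~\ref{thm:viscotity_characterization_Vc} to upgrade continuity to $C^1$-regularity. Since $V_c$ is concave on $[0,\infty)$, it has left and right derivatives at every interior point, with $V_c'(x-)\geq V_c'(x+)$, and the only way $C^1$-smoothness can fail at an interior point $x_0>0$ is that $V_c'(x_0-)>V_c'(x_0+)$, i.e., $V_c$ has a genuine concave kink there. (At $x=0$ smoothness is already known from Proposition~\ref{prop:Vc-derivative-at-zero}, so we only need to worry about $x_0>0$.) So first I would record these consequences of concavity and reduce the problem to ruling out a concave kink at an interior point.

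Next I would derive a contradiction at such a hypothetical kink $x_0$. The key observation is that at a concave kink one can always ``touch $V_c$ from above'' by a smooth test function: pick any slope $p$ strictly between $V_c'(x_0+)$ and $V_c'(x_0-)$, and for small $\delta>0$ the function $\phi(x)=V_c(x_0)+p(x-x_0)+\tfrac{1}{2\delta}(x-x_0)^2$ satisfies $\phi(x_0)=V_c(x_0)$ and $\phi(x)>V_c(x)$ for $0<|x-x_0|$ small — indeed near $x_0$, $V_c(x)\le V_c(x_0)+V_c'(x_0+)(x-x_0)+o(x-x_0)$ on the right and $V_c(x)\le V_c(x_0)+V_c'(x_0-)(x-x_0)+o(x-x_0)$ on the left, and in both cases the linear part of $\phi$ dominates because $p$ lies strictly between the two one-sided derivatives, while the quadratic term is negligible. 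Hence $x_0$ is a local maximum of $V_c-\phi$, so the supersolution (viscosity) inequality applies with test function $\phi$: $qV_c(x_0)-\sup_{0\le l\le F(x_0)}\mathcal L_l[\phi](x_0)\ge 0$. But $\phi''(x_0)=1/\delta$, so $\mathcal L_l[\phi](x_0)=(\mu-l)p+\tfrac{\sigma^2}{2\delta}+l$, and $\sup_{0\le l\le F(x_0)}\mathcal L_l[\phi](x_0)\ge \tfrac{\sigma^2}{2\delta}+\mu p$ (taking $l=0$), which $\to+\infty$ as $\delta\downarrow 0$ while $qV_c(x_0)$ stays fixed. This contradicts the supersolution inequality for $\delta$ small enough. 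Therefore no concave kink exists, $V_c'(x_0-)=V_c'(x_0+)$ for all $x_0>0$, and a concave function with matching one-sided derivatives everywhere is continuously differentiable.

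The only genuinely delicate point — and the step I would flag as the main obstacle — is the behavior at the boundary point $x=0$ and the passage from ``differentiable at each point'' to ``$C^1$ on the closed half-line $[0,\infty)$.'' For $x=0$ differentiability is Proposition~\ref{prop:Vc-derivative-at-zero} ($V_c'(0+)=\beta$); for interior points we have just shown differentiability; and then continuity of $x\mapsto V_c'(x)$ is automatic for a concave function that is differentiable everywhere (the right-derivative of a concave function is right-continuous and the left-derivative is left-continuous, and these coincide here), with one-sided continuity of $V_c'$ at the endpoint $0$ following the same way. I would state this monotonicity/continuity fact about derivatives of concave functions explicitly (or cite Rockafellar) rather than belabor it. This completes the proof that $V_c\in C^1([0,\infty))$.
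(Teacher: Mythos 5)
Your overall strategy --- touching $V_c$ at a hypothetical concave kink $x_0>0$ with a paraboloid and blowing up its curvature to contradict a viscosity inequality --- is exactly the paper's, and your reductions (concavity gives one-sided derivatives, only a kink with $V_c'(x_0-)>V_c'(x_0+)$ can occur, and everywhere-differentiability of a concave function upgrades automatically to $C^1$, with $x=0$ handled by Proposition~\ref{prop:Vc-derivative-at-zero}) are fine. But the key step contains a sign error that destroys the contradiction as written. Your test function $\phi(x)=V_c(x_0)+p(x-x_0)+\tfrac{1}{2\delta}(x-x_0)^2$ touches $V_c$ \emph{from above} ($V_c\le\phi$ near $x_0$, i.e., $x_0$ is a local maximum of $V_c-\phi$), so by Definition~\ref{def.visco} the applicable inequality is the \emph{subsolution} one,
\[
q\phi(x_0)-\sup_{0\le l\le F(x_0)}\mathcal L_l[\phi](x_0)\le 0,
\]
not the supersolution inequality $\ge 0$ that you invoke. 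With the correctly labelled inequality, sending $\phi''(x_0)=1/\delta\to+\infty$ only drives the left-hand side to $-\infty$, which is perfectly consistent with ``$\le 0$'': there is no contradiction, and the argument fails.

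The repair is immediate and is precisely the paper's case (b) in Appendix~\ref{preuve-prop.C1}: keep the slope $p$ strictly between the two one-sided derivatives, but take the quadratic coefficient $M<0$ and let $M\to-\infty$. Because $p$ lies strictly between $V_c'(x_0+)$ and $V_c'(x_0-)$, the linear gap dominates any quadratic term in a neighbourhood of $x_0$, so $\phi_M(x)=V_c(x_0)+p(x-x_0)+\tfrac12 M(x-x_0)^2$ still touches $V_c$ from above locally for \emph{every} $M\in\reals$. The subsolution inequality then reads $q\phi_M(x_0)-\sup_{0\le l\le F(x_0)}\left[(\mu-l)p+l\right]-\tfrac12\sigma^2 M\le 0$, and the term $-\tfrac12\sigma^2 M\to+\infty$ as $M\to-\infty$ gives the desired contradiction. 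With that correction your proof coincides with the paper's.
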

A proof of this last proposition is provided in Appendix~\ref{preuve-prop.C1}.

The concavity and smoothness of $V_c$ allow us to expect that the following (analytical) values will correspond to (probabilistic) levels for optimal capital injections and optimal dividend payments: define
$$
a_c = \inf\{x \geq 0 : V_c'(x) < \beta\} \quad \mbox{and} \quad b_c = \inf\{x\geq 0 : V_c'(x) = 1\}.
$$
By concavity and since $V_c'(0+) = \beta$, we know that $V_c'(x) = \beta$ for all $x \leq a_c$. In particular, if $a_c > 0$, we deduce that $V_c(x) = V_c(a_c) - \beta (a_c-x),$ for  $x <  a_c.$ In words, this means we expect that it should be optimal to inject an amount $a_c-x$ when the process is at level $x \in (0,a_c)$, i.e., the process jumps upward from $x$ to $a_c$ by paying $\beta(a_c-x)$. Intuitively, the condition $V'(x) = \beta$ tells us when to inject. On the other hand, the threshold $b_c$ should correspond to the barrier of an optimal bang-bang dividend strategy. Indeed, concavity of $V_c$ implies that $V_c' \geq 1$ on $[0,b_c]$ and $V_c' \leq 1$ on $[b_c,\infty).$ The significance of this observation comes from the HJB equation~\eqref{eq.visco}, in which the supremum is attained for $l=0$ when $V_c'(x) > 1$, and for $l = F(x)$ when $V_c'(x) < 1$.

The next proposition is more specific about those values. In particular, it is (intuitively) coherent with Proposition~\ref{prop:Vc_reflected}.

\begin{proposition}\label{prop:b_c_exists}
We have  $a_c =0$ and $0 < b_c < \infty$.
\end{proposition}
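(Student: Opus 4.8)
The claim has two parts: $a_c = 0$ and $0 < b_c < \infty$. The plan is to deduce $a_c=0$ almost immediately from the already-established fact that $V_c'(0+)=\beta$ (Proposition~\ref{prop:Vc-derivative-at-zero}) together with the concavity of $V_c$ (Proposition~\ref{prop:propertiesVc}). Indeed, since $V_c$ is concave, $V_c'$ is nonincreasing, so $V_c'(x) \leq V_c'(0+) = \beta$ for all $x > 0$. To conclude $a_c = 0$ we must rule out the possibility that $V_c'(x) = \beta$ on a nondegenerate interval $[0,a_c]$ with $a_c>0$. First I would note that on such an interval $V_c$ would be affine with slope $\beta$, so $V_c(x) = V_c(0) + \beta x$; plugging the test function $\phi(x) = V_c(0)+\beta x$ into the HJB equation~\eqref{eq.visco} at an interior point $x \in (0,a_c)$ and using that $V_c$ is a (classical, by Proposition~\ref{prop.C1}, hence viscosity) solution there, the supremum $\sup_{0\le l\le F(x)} \mathcal L_l[\phi](x)$ is attained at $l = F(x)$ since $\phi'(x)=\beta > 1$, giving $q\phi(x) - (\mu - F(x))\beta - F(x) = 0$, i.e. $q(V_c(0)+\beta x) = \mu\beta - F(x)(\beta - 1)$. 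The right-hand side is nonincreasing in $x$ (as $F$ is nondecreasing and $\beta>1$) while the left-hand side is strictly increasing, so this can hold on at most a single point — a contradiction with it holding on all of $(0,a_c)$. Hence $a_c = 0$.

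For the second part, $b_c < \infty$: here I would use the linear upper bound $V_c(x) \leq C + Ax$ with $A < 1$ from Proposition~\ref{prop:propertiesVc}. By concavity, if $V_c'(x) \geq 1$ held for all $x \geq 0$, then $V_c(x) \geq V_c(0) + x$ for all $x$, contradicting $V_c(x) \leq C + Ax$ once $x > (C - V_c(0))/(1-A)$. Therefore there exists some $x$ with $V_c'(x) < 1$, and since $V_c'$ is continuous (Proposition~\ref{prop.C1}) and nonincreasing, $b_c = \inf\{x : V_c'(x) = 1\}$ is finite.

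For $b_c > 0$: I would argue by contradiction. Suppose $b_c = 0$, meaning $V_c'(0+) = 1$. But we know $V_c'(0+) = \beta > 1$ from Proposition~\ref{prop:Vc-derivative-at-zero}, an immediate contradiction. Actually this also requires knowing that the infimum defining $b_c$ is attained/approached from the right where $V_c' \le 1$; since $V_c'(0+) = \beta > 1$ and $V_c'$ is continuous, there is a genuine interval $[0,\delta]$ on which $V_c' > 1$ for some $\delta > 0$ (for instance any $\delta$ with $V_c'(\delta) > 1$, which exists by continuity), so $b_c \geq \delta > 0$. This completes the proof.

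The only mildly delicate step is the HJB-plugging argument ruling out $a_c > 0$; everything else is a direct consequence of concavity, the value $V_c'(0+)=\beta$, the $C^1$-regularity, and the linear growth bound, all of which are already in hand. I expect the alternative the authors may prefer is a purely probabilistic argument: if $a_c>0$, i.e. $V_c$ has slope exactly $\beta$ near zero, one can exhibit a capital-injection-at-zero strategy (reflection) that strictly beats the ``jump to $a_c$'' prescription, contradicting optimality — but the analytic contradiction via the HJB equation above is cleaner and self-contained given Proposition~\ref{prop.C1}.
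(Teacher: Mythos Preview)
Your overall approach is essentially the paper's: rule out $a_c>0$ by plugging the affine function $V_c(0)+\beta x$ into the HJB equation and obtaining an equality that cannot hold on an interval; get $b_c<\infty$ from the linear upper bound with slope $A<1$ versus the concavity lower bound; and get $b_c>0$ directly from $V_c'(0+)=\beta>1$ and continuity of $V_c'$.

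There is, however, a sign error in your HJB computation. In $\mathcal L_l[\phi](x) = (\mu-l)\phi'(x) + \tfrac12\sigma^2\phi''(x) + l$, the coefficient of $l$ is $1-\phi'(x)$. When $\phi'(x)=\beta>1$ this coefficient is \emph{negative}, so the supremum over $l\in[0,F(x)]$ is attained at $l=0$, not at $l=F(x)$. The correct identity on $(0,a_c)$ is therefore
\[
q\bigl(V_c(0)+\beta x\bigr) - \mu\beta = 0,
\]
exactly as in the paper. Your contradiction argument still goes through (the left-hand side is strictly increasing in $x$, the right-hand side is constant), so the conclusion is unaffected; but the equation you wrote, $q(V_c(0)+\beta x)=\mu\beta - F(x)(\beta-1)$, is not a consequence of the HJB equation, since with $l=F(x)$ you are evaluating $\mathcal L_l$ below its supremum and hence only get an inequality. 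With this correction, your proof matches the paper's.
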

\begin{proof}
Assume $a_c > 0$.  Since $V_c$ is concave and such that $V'_c(0+)=\beta$, we must have $V_c(x) = V_c(0) + \beta x$ for all $x\leq a_c$. As a consequence, $V_c$ is twice continuously differentiable on $(0,a_c)$ and, since it is a viscosity solution of the HJB equation in~\eqref{eq.visco}, it is also a classical solution on $(0,a_c)$. Therefore, plugging this expression in the HJB equation, we find that
$$
q V_c(x) - \sup_{0\leq l \leq F(x)} \mathcal L_l [V_c](x) = q V_c(0) + q \beta x - \mu \beta .
$$
Clearly, this last expression cannot be equal to zero for all $x \in (0,a_c)$, yielding a contradiction of the assumption that $a_c>0$.

Again, since $V_c$ is concave and such that $V'_c(0+)=\beta$, we have $b_c > 0$. Now, assume $b_c = \infty$, i.e., $V_c'(x) > 1$ for all $x \geq 0.$ From Proposition~\ref{prop:propertiesVc}, we know that $\frac{V_c(x)}{C + Ax} \leq 1$ for all $x \geq 0$, where $C>0$ and $A<1$. Since $V_c$ is concave, we also have that $V_c(x) \geq V_c(0) + V^\prime_c(x) x$ for all $x \geq 0$. Consequently,
\begin{equation*}
1 \geq \limsup_{x\to \infty} \frac{V_c(x)}{C + Ax} \geq \limsup_{x\to \infty} \frac{V_c(0) + V^\prime_c(x)x}{C + Ax} \geq \frac{1}{A} > 1 ,
\end{equation*}
which is a contradiction.
\end{proof}

The fact that $b_c$ lies in $(0,\infty)$ splits the HJB equation, of which $V_c$ is a viscosity solution, into the following two ODEs: on $[0,b_c]$,
\begin{eqnarray}\label{eq.viscoVc1a}
q u(x) - \mu u'(x) - \tfrac12 \sigma^2 u''(x) & = & 0 , \quad 0 < x < b_c, \\
u'(0+) & = & \beta , \label{eq.viscoVc1b}\\
u'(b_c) & = & 1, \label{eq.viscoVc1c}
\end{eqnarray}
and, on $[b_c,\infty)$,
\begin{eqnarray}\label{eq.viscoVc2a}
q u(x) - (\mu - F(x)) u'(x) - \tfrac12 \sigma^2 u''(x) - F(x) & = & 0, \quad b_c < x < \infty, \\
u'(b_c) & = & 1.\label{eq.viscoVc2b}
\end{eqnarray}

These two ODEs further characterize the value function $V_c$ as a classical solution, i.e., as a twice continuously differentiable solution, of each of these ODEs:
\begin{theorem}\label{thm:V_c_is_C2}
The restriction of $V_c$ to $[0,b_c]$ is the unique classical solution in $\mathcal W$ of~\eqref{eq.viscoVc1a}-\eqref{eq.viscoVc1b}-\eqref{eq.viscoVc1c}, while the restriction of $V_c$ to $[b_c,\infty)$ is the unique classical solution in $\mathcal W$ of~\eqref{eq.viscoVc2a}-\eqref{eq.viscoVc2b}.
Furthermore, we have $\lim_{x \uparrow b_c } V_c''(x) = \lim_{x \downarrow b_c } V_c''(x)$ and hence $V_c$ is twice continuously differentiable on $[0,\infty)$.
\end{theorem}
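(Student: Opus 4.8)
The plan is to obtain the claimed regularity in three stages: first solve the two ODEs on $[0,b_c]$ and $[b_c,\infty)$ explicitly (or semi-explicitly), then patch the pieces together at $b_c$, and finally verify the $C^2$-matching of the second derivative at $b_c$. For the left piece, equation~\eqref{eq.viscoVc1a} is the homogeneous constant-coefficient ODE~\eqref{eq:ODE_hom}, whose general solution on $[0,b_c]$ is a linear combination of $\Wq$ and, say, $\Zq$ (or $\overline\Zq+\mu/q$); imposing the two boundary conditions $u'(0+)=\beta$ and $u'(b_c)=1$ pins down a unique such combination, and one checks it lies in $\mathcal W$. For the right piece, \eqref{eq.viscoVc2a} is the nonhomogeneous ODE~\eqref{ODE.nonhom}, whose solutions are $I_F^\lambda = I_F + \lambda\varphi$ for $\lambda\in\reals$ (plus the other homogeneous solution of~\eqref{eq:ODE_F}, which grows at infinity and is therefore excluded by the requirement $u \in \mathcal W$, since $\varphi$ is the unique decreasing solution vanishing at infinity). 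Thus on $[b_c,\infty)$ the solution in $\mathcal W$ is $I_F + \lambda\varphi$ for a unique $\lambda$ fixed by $u'(b_c)=1$. Uniqueness in $\mathcal W$ in each case follows from the linear structure together with the growth constraint.

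Next I would identify these classical solutions with $V_c$. We already know from Theorem~\ref{thm:viscotity_characterization_Vc} that $V_c$ is a viscosity solution of~\eqref{eq.visco}-\eqref{eq.visco2Vc} in $\mathcal W$, from Proposition~\ref{prop.C1} that it is $C^1$ on $[0,\infty)$, and from Proposition~\ref{prop:b_c_exists} together with the concavity of $V_c$ that $V_c' \geq 1$ on $[0,b_c]$ and $V_c' \leq 1$ on $[b_c,\infty)$. On the open interval $(0,b_c)$ the supremum in~\eqref{eq.visco} is attained at $l=0$, so the equation reduces to~\eqref{eq.viscoVc1a} there; a viscosity solution of a linear second-order ODE with no constraint is automatically a classical solution (standard interior regularity for viscosity solutions of uniformly elliptic equations, or directly: the equation can be solved and any viscosity solution must agree with the classical one by the comparison principle). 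Hence the restriction of $V_c$ to $[0,b_c]$ is a classical solution in $\mathcal W$ of~\eqref{eq.viscoVc1a}, and its boundary data are~\eqref{eq.viscoVc1b} (from Proposition~\ref{prop:Vc-derivative-at-zero}) and~\eqref{eq.viscoVc1c} (from the definition of $b_c$ and $C^1$-continuity). By the uniqueness established in the first stage it must coincide with the explicit solution, so it is $C^2$ on $[0,b_c]$. The same argument on $(b_c,\infty)$, where the supremum is attained at $l=F(x)$ so that~\eqref{eq.visco} reduces to~\eqref{eq.viscoVc2a}, shows that the restriction of $V_c$ to $[b_c,\infty)$ is the unique classical $\mathcal W$-solution of~\eqref{eq.viscoVc2a}-\eqref{eq.viscoVc2b}, hence $C^2$ there.

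Finally, for the $C^2$-matching at $b_c$: we now know $V_c \in C^2([0,b_c]) \cap C^2([b_c,\infty))$ and $V_c \in C^1([0,\infty))$, so it remains to show $\lim_{x\uparrow b_c}V_c''(x) = \lim_{x\downarrow b_c}V_c''(x)$. Solve each ODE for $u''$: from~\eqref{eq.viscoVc1a}, at $x=b_c^-$ we get $\tfrac12\sigma^2 V_c''(b_c-) = qV_c(b_c) - \mu V_c'(b_c) = qV_c(b_c) - \mu$, using $V_c'(b_c)=1$; from~\eqref{eq.viscoVc2a}, at $x = b_c^+$ we get $\tfrac12\sigma^2 V_c''(b_c+) = qV_c(b_c) - (\mu - F(b_c))V_c'(b_c) - F(b_c) = qV_c(b_c) - \mu + F(b_c) - F(b_c) = qV_c(b_c) - \mu$, again using $V_c'(b_c)=1$. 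The two expressions coincide because the term $F(b_c)u'(b_c)$ in the second ODE exactly cancels the additive $F(b_c)$ precisely when $u'(b_c)=1$ — which is the content of the smooth-fit condition. Hence $V_c''$ is continuous at $b_c$ and $V_c \in C^2([0,\infty))$. The main obstacle is the careful bookkeeping in the first and second stages: ensuring that the $\mathcal W$ growth constraint genuinely singles out a unique solution of each ODE (in particular discarding the exponentially growing homogeneous solution of~\eqref{eq:ODE_F} on the right piece) and that the viscosity solution on the open intervals is promoted to a classical solution with the correct boundary behavior before invoking uniqueness; the final $C^2$-matching is then a one-line cancellation.
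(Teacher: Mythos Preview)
Your proposal is correct and follows essentially the same approach as the paper: reduce the HJB equation to the two linear ODEs on $(0,b_c)$ and $(b_c,\infty)$ using $V_c'>1$ (resp.\ $V_c'\leq 1$) so that the supremum is attained at $l=0$ (resp.\ $l=F(x)$), invoke standard ODE theory to conclude that the viscosity solution is classical and unique in $\mathcal W$, and then obtain the $C^2$-matching at $b_c$ by solving each ODE for $u''(b_c)$ and using $V_c'(b_c)=1$ to see the $F(b_c)$ terms cancel. The only cosmetic difference is that you first write down the explicit solution spaces ($\Wq,\Zq$ on the left and $I_F+\lambda\varphi$ on the right) before identifying them with $V_c$, whereas the paper goes through the viscosity test-function argument first and then cites ``standard ODE results'' for uniqueness and regularity.
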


\begin{proof}
First, we show that each restriction is a viscosity solution of the corresponding ODE, i.e., of~\eqref{eq.viscoVc1a} and~\eqref{eq.viscoVc2a}, respectively.

Let $x\in (0,b_c).$ Following the definition of viscosity subsolutions (see Definition \ref{def.visco} of Appendix \ref{appendix-viscosity}), let $\phi \in C^2(0,b_c)$ be such that $\phi(x) = V_c(x)$ and $\phi\geq V_c$. We need to show that $\phi$ is a subsolution of~\eqref{eq.viscoVc1a}. By definition of $b_c$ and the fact that $a_c = 0$, we have $1 < V_c'(x) = \phi'(x) < \beta$. Since $V_c$ is a viscosity subsolution of~\eqref{eq.visco} and
$$
0 \leq \sup_{l \leq F(x)} \mathcal L_l [\phi](x) = \mu \phi'(x) + \tfrac12 \sigma^2 \phi''(x),
$$
then $\phi$ satisfies~\eqref{eq.viscoVc1a} at the point $x$ with an inequality ($\leq$) instead of an equality.  This shows that $V_c$ is a viscosity subsolution of~\eqref{eq.viscoVc1a}.

Let $x\in (b_c,\infty)$ and let $\phi \in C^2(b_c,\infty)$ be such that $\phi(x) = V_c(x)$ and $\phi\geq V_c$. As before, we need to show that $\phi$ is a subsolution of~\eqref{eq.viscoVc2a}. By definition of $b_c$, we have $V_c'(x) = \phi'(x) \leq 1$. Since $V_c$ is a viscosity subsolution of~\eqref{eq.visco} and
$$
0 \leq \sup_{l \leq F(x)} \mathcal L_l [\phi](x) = (\mu-F(x)) \phi'(x) + \tfrac12 \sigma^2 \phi''(x) + F(x),
$$
then $\phi$ satisfies~\eqref{eq.viscoVc2a} at the point $x$ with an inequality ($\leq$) instead of an equality. This shows that $V_c$ is viscosity subsolution of~\eqref{eq.viscoVc2a}.

The supersolution property is proved similarly. It is also clear that $V_c$ satisfies~\eqref{eq.viscoVc1b}-\eqref{eq.viscoVc1c}-\eqref{eq.viscoVc2b}.

Standard results from ordinary differential equations theory imply that the solution of~\eqref{eq.viscoVc1a}-\eqref{eq.viscoVc1b}-\eqref{eq.viscoVc1c} (resp.\ \eqref{eq.viscoVc2a}-\eqref{eq.viscoVc2b}) is in fact unique and in $C^2$. Since any classical solution is also a viscosity solution, we know that $V_c$ is $C^2$ on $[0,b_c)$ and $(b_c, \infty)$ by uniqueness. In particular, the left and right second derivatives of $V_c$ at the point $b_c$ are well defined. Since $V_c'(b_c) = 1$, we readily find from~\eqref{eq.viscoVc1a} and~\eqref{eq.viscoVc2a} that the left and right second derivatives are equal at $x = b_c$. Therefore, $V_c \in C^2[0,\infty).$
\end{proof}

\subsection{Mean-reverting strategies with forced injections}\label{sec:Fluctuation_theory}

From the previous section, it seems now natural to study the family of reflected mean-reverting strategies, i.e., bang-bang strategies $(\ell^b,\underline{G}^{b})$ consisting in paying out dividends at maximal rate above level $b>0$ and (always) injecting at level $0$. We will compute the performance function of an arbitrary reflected mean-reverting strategy. As a by-product, an equation for a candidate optimal barrier level can be deduced. Then, in the next section, we will show that this optimal barrier level is well defined and indeed equal to $b_c$, and that the value function of the control problem is equal to the performance function of the corresponding reflected mean-reverting strategy. More details in Section~\ref{sect:optimal-reflected-strategy}.

Now, let us specify the performance function of an arbitrary reflected mean-reverting strategy at level $b$: for $x \geq 0$, define
\[
J_c(x; b) = \e_x \left[ \int_0^\infty \mathrm{e}^{-qs} \left( F(\underline{X}^{b}_s) \mathbf{1}_{\{\underline{X}^{b}_s \geq b\}}  \dd s - \beta \dd \underline{G}^b_s \right) \right] .
\]
Of course, for any fixed $b \geq 0$, we have $J_c(x;b) \leq V_c(x)$ for all $x \geq 0$, which of course yields $\sup_{b \geq 0} J_c(x;b) \leq V_c(x)$ for all $x \geq 0$.

\begin{theorem}\label{prop: Jc_continuous}
For a fixed $b > 0$, we have
\[
J_c(x;b) =
\begin{cases}
\left( C_1(b) - \beta \tfrac{\sigma^2}{2} C_3(b) \right) \Zq(x) + \beta  \tfrac{\sigma^2}{2} \Wq(x) , & \text{if $0 \leq x < b$,}\\
\left(C_2(b) - \beta \tfrac{\sigma^2}{2} C_4(b)  \right) \Hq(x) + I_F(x) , & \text{if $x \geq b$,}
\end{cases}
\]
where
\begin{align*}
C_1(b) &= \frac{\Hq(b) I_F^\prime (b) - \Hqprime(b) I_F(b)}{\Hq(b) \Zqprime(b) - \Hqprime(b) \Zq(b)} , & C_2(b) = \frac{\Zq(b) I_F^\prime (b) - \Zqprime(b) I_F(b)}{\Hq(b) \Zqprime(b) - \Hqprime(b) \Zq(b)} ,\\
C_3(b) &= \frac{\Wqprime (b) \Hq (b) - \Wq (b) \Hqprime (b)}{\Hq(b) \Zqprime(b) - \Hqprime(b) \Zq(b)} , & C_4(b) = \frac{\Zq (b) \Wqprime (b) - \Zqprime (b) \Wq (b)}{\Hq(b) \Zqprime(b) - \Hqprime(b) \Zq(b)} .
\end{align*}
\end{theorem}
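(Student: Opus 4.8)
The plan is to derive $J_c(\cdot;b)$ by solving, region by region, the ODEs satisfied by the performance function of the reflected mean-reverting strategy at level $b$, and then to pin down the two free constants in each region using a matching condition at $b$ together with the reflection (Neumann-type) condition at $0$. On $[b,\infty)$ the controlled process is $\underline X^b$ with dividend rate $F(\underline X^b)$ (and no injections occur while the process is above $b>0$), so by a standard Feynman--Kac / strong Markov argument $x\mapsto J_c(x;b)$ solves the nonhomogeneous ODE~\eqref{ODE.nonhom} there; since $I_F^\lambda=I_F+\lambda\varphi$ is the general $\mathcal W$-bounded family of solutions of~\eqref{ODE.nonhom} and $\varphi$ is the unique decreasing solution of~\eqref{eq:ODE_F} vanishing at infinity, boundedness of $J_c(\cdot;b)$ forces the form $J_c(x;b)=c\,\varphi(x)+I_F(x)$ on $[b,\infty)$ for a single constant $c$. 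On $[0,b)$ the dividend rate is zero, so the process behaves like the Brownian motion with drift $\mu$, reflected at $0$; hence $J_c(\cdot;b)$ solves the homogeneous ODE~\eqref{eq:ODE_hom} on $(0,b)$, whose general solution is a linear combination of $\Wq$ and $\Zq$. This gives the ansatz $J_c(x;b)=A\,\Zq(x)+B\,\Wq(x)$ on $[0,b)$.

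Next I would impose the boundary/matching conditions to solve for $A,B,c$. The reflection at $0$ contributes the term $-\beta\,\dd\underline G^b$ in the payoff; using the fluctuation identity $\e_x[\int_0^{\tau^Y_b}\mathrm e^{-qt}\dd\underline G_t]=u_{0,b}(x)$ together with $\e_x[\mathrm e^{-q\tau^Y_b}]=\Zq(x)/\Zq(b)$, and the fact that up to $\tau^Y_b$ the reflected process is exactly $Y$ started at $x$, one sees that the local-time/injection cost on $[0,b)$ contributes $-\beta$ times a solution of~\eqref{eq:ODE_hom} with the boundary behaviour of $u_{0,b}$, namely $u_{0,b}'(0+)=-1$. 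Concretely, the reflection condition becomes a condition on the derivative at $0$: decomposing the payoff as "payoff collected before $\tau^Y_b$" plus "$\mathrm e^{-q\tau^Y_b}J_c(b;b)$", the part before $\tau^Y_b$ is purely $-\beta u_{0,\cdot}$-type, so $J_c(x;b) = J_c(b;b)\,\Zq(x)/\Zq(b) - \beta\bigl(u_{0,b}(x)+\tfrac{\mu}{q}\Zq(x)/\Zq(b)-\tfrac\mu q\bigr)$ — or, more cleanly, the homogeneous solution on $[0,b)$ must satisfy the Neumann condition $\tfrac{\sigma^2}{2}J_c'(0+;b) $ equal to $-\beta\cdot\tfrac{\sigma^2}{2}$ coming from the reflection, i.e. $J_c'(0+;b)=\beta$ after accounting for signs (recall $\beta>1$, reflection pushes the process up). Using $\Zqprime(0)=0$ and $\Wqprime(0)=2/\sigma^2$, this forces $B=\beta\tfrac{\sigma^2}{2}$, leaving one free constant $A$ on $[0,b)$ and one, $c$, on $[b,\infty)$. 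These two are then determined by $C^1$-matching at $b$: continuity $A\,\Zq(b)+\beta\tfrac{\sigma^2}2\Wq(b)=c\,\varphi(b)+I_F(b)$ and continuity of the derivative $A\,\Zqprime(b)+\beta\tfrac{\sigma^2}2\Wqprime(b)=c\,\varphi'(b)+I_F'(b)$. This is a $2\times2$ linear system whose determinant is $\Hq(b)\Zqprime(b)-\Hqprime(b)\Zq(b)$ (the Wronskian-type quantity appearing in the denominators of $C_1,\dots,C_4$). Solving it by Cramer's rule and regrouping the $I_F$-terms versus the $\beta\tfrac{\sigma^2}2\Wq$-terms yields exactly $A=C_1(b)-\beta\tfrac{\sigma^2}2 C_3(b)$ and $c=C_2(b)-\beta\tfrac{\sigma^2}2 C_4(b)$, which is the claimed formula.

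Two points deserve care and I expect the matching/reflection bookkeeping to be the main obstacle. First, one must justify rigorously that $J_c(\cdot;b)$ is $C^1$ across $b$ and satisfies the two ODEs in the classical sense: this follows from the smooth-fit properties of the underlying diffusion (the drift coefficient $\mu - F(x)\ind_{\{x\ge b\}}$ is merely piecewise continuous, but the scale function and speed measure are still continuous, so $C^1$-regularity of the resolvent holds; alternatively invoke that $\underline X^b$ is a regular diffusion on $[0,\infty)$ reflected at $0$ and apply the general Feynman--Kac theory as in the references \cite{APP2007,ekstrom-lindensjo_2021}). Second, one must handle the interaction between the reflection at $0$ and the refraction at $b$ carefully when $x<b$: the representation as "run $Y$ until it first hits $b$, then restart" is valid because below $b$ no dividends are paid and injections occur only at $0$, so the process there is genuinely the reflected drifted Brownian motion $Y$; the strong Markov property at $\tau^Y_b$ then splices in $J_c(b;b)$. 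The identity $u_{0,b}'(0+)=-1$ (stated in Section~\ref{sec:notation}) together with $\Zqprime(0)=0$ is exactly what makes the coefficient of $\Wq$ come out to the clean value $\beta\sigma^2/2$ rather than something $b$-dependent — checking this is where the bulk of the (routine) computation lies. The linear-growth membership $J_c(\cdot;b)\in\mathcal W$, needed to rule out the exponentially growing solution of~\eqref{ODE.nonhom} on $[b,\infty)$ and to invoke uniqueness, is immediate from the admissibility bound $0\le\ell\le F$ and the linear growth of $F$.
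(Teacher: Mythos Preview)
Your approach is correct in substance but takes a genuinely different route from the paper's. The paper works purely probabilistically: it splits $J_c(x;b)=f(x)-\beta g(x)$ into the discounted dividend part $f$ and the discounted injection part $g$, and computes each via the strong Markov property. For $f$ it uses that $\underline X^b$ coincides in law with $X^b$ up to the first hit of $0$ (invoking Lemma~\ref{Lemma:fluctuation-identity} and a formula from \cite{LR2023}), and coincides with $Y$ from $0$ up to the first hit of $b$; for $g$ it writes $g(x)=h(x)g(0)$ with $h(x)=\e_x[\mathrm e^{-q\tau_0^{X^b}}]$ and determines $g(0)$ by matching two expressions for $g'(0+)$. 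The $C^1$-fit at $b$ (Lemma~\ref{lem:smooth_fit}) and the Neumann condition $J_c'(0+;b)=\beta$ then emerge as \emph{consequences} of the explicit formula, not as inputs.

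You instead identify the ODEs on $(0,b)$ and $(b,\infty)$, write down general solutions, impose $J_c'(0+;b)=\beta$ to fix the $\Wq$-coefficient, and then use $C^1$-matching at $b$ to solve a $2\times 2$ system for the remaining constants. The algebra is cleaner, but the logical order is inverted: you must \emph{assume} $C^1$-regularity across the refraction point $b$ a priori, whereas the paper derives it. Your appeal to general diffusion/Feynman--Kac theory for this is plausible but not entirely routine for a discontinuous drift; the paper's probabilistic route sidesteps the issue. Your justification of the Neumann condition via the decomposition at $\tau^Y_b$ is the right idea --- the correct identity is simply $J_c(x;b)=-\beta u_{0,b}(x)+\tfrac{\Zq(x)}{\Zq(b)}J_c(b;b)$ for $x\le b$, and differentiating at $0$ gives $\beta$ --- but the intermediate formula you wrote contains spurious $\mu/q$ terms. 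In short: your ODE-plus-boundary-conditions method buys a tidier computation at the cost of having to justify the smooth fit separately; the paper's fluctuation-identity method is more self-contained.
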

\begin{proof}
For the purpose of this proof, we define the following two functions:
\[
f(x) =  \e_x \left[ \int_0^\infty \mathrm{e}^{-qt} F(\underline{X}^{b}_t) \mathbf{1}_{\{\underline{X}^{b}_t > b\}} \mathrm{d}t \right] \quad \text{and} \quad g(x) = \e_x \left[ \int_0^\infty \mathrm{e}^{-qt} \mathrm{d} \underline{G}^b_t \right] .
\]

First, we compute $f(x)$. For $x>0$, by the strong Markov property of $\underline{X}^{b}$ and then by the fact that the processes $\underline{X}^{b}$ and $X^b$ have the same distribution up to their first passage at level $0$ (with respect to $\p_x$), we can write
\[
f(x) = \e_x \left[ \int_0^{\tau_0^{X^b}} \mathrm{e}^{-qt} F(X^b_t) \ind_{\{X^b_t\geq b\}}  \dd t \right] + \e_x \left[ \mathrm{e}^{-q \tau_0^{X^b}} \right] f(0) .
\]
Note that Lemma~\ref{Lemma:fluctuation-identity} in the Appendix gives us an expression for $\e_x \left[ \mathrm{e}^{-q \tau_0^{X^b}} \right]$, while the following identity has been computed in Proposition 3.2 of \cite{LR2023}:
\[
\e_x \left[ \int_0^{\tau_0^{X^b}} \mathrm{e}^{-qt} F(X^b_t) \ind_{\{X^b_t\geq b\}}  \dd t \right] =
\begin{cases}
\Wq(x) \left( \frac{I_F^\prime(b) \Hq(b) - I_F(b) \Hqprime(b)}{\Wqprime(b) \Hq(b) - \Wq(b) \Hqprime(b)} \right) & \text{if $0 \leq x \leq b$,}\\
I_F(x) + \Hq(x) \left( \frac{I_F^\prime(b) \Wq(b) - I_F(b) \Wqprime (b)}{\Wqprime(b) \Hq(b) - \Wq(b) \Hqprime(b)} \right) & \text{if $x \geq b$.}
\end{cases}
\]

Similarly, by the strong Markov property of $\underline{X}^{b}$ and then by the fact that the processes $\underline{X}^{b}$ and $Y$ have the same distribution up to their first passage at level $b$ (for example, with respect to $\p_0$), we can write
\[
f(0) = \e_0 \left[ \mathrm{e}^{-q \tau^Y_b} \right] f(b) = \frac{1}{\Zq(b)} f(b) .
\]
In conclusion, after solving for $f(b)$ in the obtained expression, we get
\[
f(x) =
\begin{cases}
C_1(b) \Zq(x) , & \text{if $0 \leq x < b$,}\\
I_F(x) + C_2(b) \Hq(x) , & \text{if $x \geq b$.}
\end{cases}
\]

Second, we compute $g(x)$. For $0 \leq x \leq b$, using Markovian arguments as above we can write
\[
g(x) = \e_x \left[ \int_0^{\tau^Y_b} \mathrm{e}^{-qt}  \dd  \underline{G}_t \right] + \e_x \left[ \mathrm{e}^{-q \tau^Y_b} \right] g(b) = u_{0,b}(x) + \frac{\Zq(x)}{\Zq(b)} g(b).
\]
In particular, we can compute $g'(0+) = u_{0,b}'(0+) = -1$ since $\Zqprime(0) = 0.$
Moreover, for any $x \geq 0$, we have
\begin{equation}\label{eq.g(x)} g(x) =  \e_x \left[ \mathrm{e}^{-q \tau_0^{X^b}} \right] g(0) = h(x) g(0),\end{equation} in which $h$ is defined in  Lemma~\ref{Lemma:fluctuation-identity} of Appendix \ref{AppendixC} and given by the expression
\[h(x) = \left\{
           \begin{array}{ll}
             \Zq (x) - \Wq(x) / C_3(b), & \hbox{if $0 \leq x < b$,} \\
             \Hq(x) C_4(b)/C_3(b), & \hbox{if $x \geq b$.}
           \end{array}
         \right.
\] Since $h'(0+)$ exists, we deduce from (\ref{eq.g(x)}) that $-1 = g'(0+) = h'(0+) g(0)$ which implies that $g(0) = -\frac{1}{h'(0+)} = \frac{C_3(b)}{\Wqprime(0)} = \frac{\sigma^2 C_3(b)}{2}.$ The result follows.
\end{proof}

\begin{remark}
Given that $V_c(x) \geq J_c(x;b)$ for any $x,b\geq0$, it is now clear from~Theorem \ref{prop: Jc_continuous} that $V_c(x) > -\infty$.
\end{remark}

It is easy to verify that $J_c(\cdot;b)$ is continuous. Also, for $x \in (0,b)$, we have
\[
J_c^\prime(x;b) =
\left( C_1(b) - \beta \tfrac{\sigma^2}{2} C_3(b) \right) \Zqprime(x) - \beta \tfrac{\sigma^2}{2} \Wqprime(x)
\]
and, for $x \in (b,\infty)$, we have
\[
J_c^\prime(x;b) = C_2(b) \Hqprime(x) + I_F^\prime(x)- \beta \tfrac{\sigma^2}{2} C_4(b) \Hqprime(x).
\]
It is also easy to verify that
\[
C_1(b) \Zqprime(b) = I_F^\prime(b) + C_2(b) \Hqprime(b) \quad \mbox{ and }  \quad C_3(b) \Zqprime(x) + \Wqprime(x) = C_4(b) \Hqprime(x)
\]
from the definitions of $C_1$, $C_2,$ $C_3$ and $C_4$. The details are left to the reader.

In particular, we have the following smooth-fit condition:
\begin{lemma}\label{lem:smooth_fit}
For $b>0$, we have $J_c^\prime(b-;b) = J_c^\prime(b+;b)$ and hence $J_c(\cdot;b)$ is continuously differentiable on $[0,\infty)$.
\end{lemma}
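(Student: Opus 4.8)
The plan is to verify the smooth-fit condition $J_c^\prime(b-;b) = J_c^\prime(b+;b)$ by direct computation using the two expressions for $J_c^\prime$ displayed just before the lemma, together with the two algebraic identities
\[
C_1(b)\Zqprime(b) = I_F^\prime(b) + C_2(b)\Hqprime(b) \quad\text{and}\quad C_3(b)\Zqprime(b) + \Wqprime(b) = C_4(b)\Hqprime(b)
\]
stated in the same paragraph (and whose verification from the definitions of $C_1,\dots,C_4$ is left to the reader). Indeed, from the formula on $(0,b)$,
\[
J_c^\prime(b-;b) = \left(C_1(b) - \beta\tfrac{\sigma^2}{2}C_3(b)\right)\Zqprime(b) - \beta\tfrac{\sigma^2}{2}\Wqprime(b) = C_1(b)\Zqprime(b) - \beta\tfrac{\sigma^2}{2}\bigl(C_3(b)\Zqprime(b) + \Wqprime(b)\bigr),
\]
and substituting the first identity into the $C_1(b)\Zqprime(b)$ term and the second identity into the parenthesized expression yields
\[
J_c^\prime(b-;b) = I_F^\prime(b) + C_2(b)\Hqprime(b) - \beta\tfrac{\sigma^2}{2}C_4(b)\Hqprime(b),
\]
which is exactly $J_c^\prime(b+;b)$ read off from the formula on $(b,\infty)$. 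This establishes equality of the one-sided derivatives at $b$.

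From there, the remaining assertion—that $J_c(\cdot;b)$ is continuously differentiable on $[0,\infty)$—follows by combining three facts: $J_c(\cdot;b)$ is continuous (already noted in the excerpt, and easily checked from $C_1(b)\Zq(b) + \beta\tfrac{\sigma^2}{2}(\Wq(b) - C_3(b)\Zq(b)) = I_F(b) + (C_2(b) - \beta\tfrac{\sigma^2}{2}C_4(b))\Hq(b)$, which is the defining relation for $C_1,C_2$ together with the $C_3,C_4$ identities at the level $b$); it is $C^1$ on each of the open intervals $(0,b)$ and $(b,\infty)$ since on each piece it is an explicit linear combination of the smooth functions $\Wq,\Zq,\Hq,I_F$; it is $C^1$ at $0$ because $\Zqprime(0)=\overline{\Wq}'(0)=0$, $\Wqprime(0)=2/\sigma^2$ are all finite (so the $(0,b)$-formula has a well-defined right derivative at $0$); and the matching of one-sided derivatives at the single interior junction point $b$ is precisely what was just proved. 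A short standard argument then upgrades ``differentiable with continuous derivative on each side and matching one-sided derivatives at $b$'' to ``$C^1$ on $[0,\infty)$.''

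I do not expect any genuine obstacle here: the lemma is a bookkeeping consequence of the closed-form expressions in Theorem~\ref{prop: Jc_continuous} and the two stated identities among $C_1,\dots,C_4$. The only mildly delicate point is making sure the denominator $\Hq(b)\Zqprime(b) - \Hqprime(b)\Zq(b)$ appearing in all of $C_1,C_2,C_3,C_4$ is nonzero for $b>0$, so that these coefficients are well defined; this is a Wronskian-type quantity for two linearly independent solutions of the homogeneous equation~\eqref{eq:ODE_hom} (namely $\Zq$) and of~\eqref{eq:ODE_F} (namely $\Hq$), and its non-vanishing follows from $\Zq(0)=1>0=\Hq(0)$ together with the monotonicity/convexity already recorded for these functions (alternatively, it is implicit in the validity of the statement of Theorem~\ref{prop: Jc_continuous} itself). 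Everything else is routine substitution, so the write-up can be kept to a few lines.
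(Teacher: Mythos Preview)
Your proposal is correct and follows exactly the approach the paper intends: the lemma is stated as an immediate consequence of the two displayed identities for $C_1,\dots,C_4$ just above it, and your substitution is precisely that derivation. Your additional remarks on continuity, piecewise smoothness, behavior at $0$, and the non-vanishing of the common denominator are reasonable supplementary checks that the paper leaves implicit.
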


In general, $J_c(\cdot;b)$ is not twice continuously differentiable but it is clearly twice continuously differentiable on $(0,b)$ and on $(b,\infty)$. Moreover, as $\Wqprime(0) = \tfrac{2}{\sigma^2}$ and $\Zqprime(0) = 0,$ we have  $J_c^\prime(0+;b) = \beta$. Also, note that, as a linear combination of solutions to Equations~\eqref{eq:ODE_hom} and~\eqref{eq:ODE_F}, $J_c(\cdot;b)$ is a solution to
\[
\frac{\sigma^2}{2} u''(x) + \mu u'(x) - q u(x) = 0 , \; x \in (0,b)
\]
and
\[
\frac{\sigma^2}{2} u''(x) + (\mu - F(x))u'(x) - q u(x) + F(x) = 0 , \; x \in (b,\infty) .
\]

\subsection{The optimal reflected mean-reverting strategy}\label{sect:optimal-reflected-strategy}

We are now ready to solve our control problem. Using the representation of the performance function $J_c(\cdot;b_c)$, as given in Theorem~\ref{prop: Jc_continuous}, with barrier level $b_c$ defined in Section~\ref{sec:visc_sol}, we will show it is equal to $V_c$.

\begin{theorem} \label{prop:VcJ}
The optimal barrier of the mean-reverting strategy with forced injections is $b_c$ and $J_c(x; b_c) = V_c(x)$ for all $x \geq 0$.
\end{theorem}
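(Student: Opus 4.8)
The plan is to prove Theorem~\ref{prop:VcJ} by a combination of a verification-type argument based on the viscosity characterization of $V_c$ (Theorem~\ref{thm:viscotity_characterization_Vc}) and the ODE characterizations established in Theorem~\ref{thm:V_c_is_C2}. The strategy is to show that the candidate performance function $J_c(\cdot;b_c)$ actually \emph{solves} the same boundary value problems that uniquely characterize $V_c$, and then invoke uniqueness. Since we already know $J_c(x;b_c)\leq V_c(x)$ for all $x\geq 0$, it suffices to establish the reverse inequality, or equivalently to identify $J_c(\cdot;b_c)$ with $V_c$.

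First I would verify that $b_c$, as defined analytically by $b_c = \inf\{x\geq 0 : V_c'(x)=1\}$ in Section~\ref{sec:visc_sol}, is the ``right'' barrier for the reflected mean-reverting family, i.e.\ that it is a root of the candidate-optimality equation for $J_c(\cdot;b)$ obtained by imposing the smooth-fit (second-order) condition at the barrier. Recall from Section~\ref{sec:Fluctuation_theory} that $J_c(\cdot;b)\in C^1[0,\infty)$ for every $b>0$ (Lemma~\ref{lem:smooth_fit}), that $J_c'(0+;b)=\beta$, and that $J_c(\cdot;b)$ solves the homogeneous ODE~\eqref{eq:ODE_hom} on $(0,b)$ and the nonhomogeneous ODE~\eqref{ODE.nonhom} on $(b,\infty)$. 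Comparing with the ODEs~\eqref{eq.viscoVc1a}--\eqref{eq.viscoVc2b} that characterize $V_c$, the only thing separating $J_c(\cdot;b)$ from being a classical solution of the full HJB system is the condition $J_c'(b;b)=1$. So the heart of the argument is to show there exists a unique $b^*>0$ with $J_c'(b^*;b^*)=1$, and that this $b^*$ equals $b_c$. For the first part I would study the function $b\mapsto J_c'(b;b)$ using the explicit expressions from Theorem~\ref{prop: Jc_continuous} (writing $J_c'(b-;b) = (C_1(b)-\beta\tfrac{\sigma^2}{2}C_3(b))\Zqprime(b) + \beta\tfrac{\sigma^2}{2}\Wqprime(b)$, or the equivalent right-hand expression), analyzing its monotonicity/limits as $b\downarrow 0$ and $b\to\infty$; this should pin down Equations~\eqref{eq.bc1}--\eqref{eq.bc2} and their unique solvability. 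For the identification $b^*=b_c$: once we know $J_c(\cdot;b^*)$ is a classical, hence viscosity, solution of~\eqref{eq.visco}--\eqref{eq.visco2Vc} lying in $\mathcal W$ (linear growth follows from $I_F\in\mathcal W$ and $\varphi$ bounded), the comparison principle (Theorem~\ref{thm:comparison}) forces $J_c(\cdot;b^*)=V_c$, and then $b_c = \inf\{x: V_c'(x)=1\} = \inf\{x: J_c'(x;b^*)=1\} = b^*$ by concavity. Alternatively, one can first establish $b_c=b^*$ directly from the ODE characterization in Theorem~\ref{thm:V_c_is_C2}: the restriction of $V_c$ to $[0,b_c]$ is \emph{the} unique $\mathcal W$-solution of~\eqref{eq.viscoVc1a}--\eqref{eq.viscoVc1c}, and $J_c(\cdot;b_c)$ restricted to $[0,b_c]$ solves exactly that system (it satisfies~\eqref{eq.viscoVc1a} on $(0,b_c)$, the initial condition $J_c'(0+;b_c)=\beta$, and — provided $b_c=b^*$ — the terminal condition $J_c'(b_c;b_c)=1$); similarly on $[b_c,\infty)$. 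Gluing the two pieces via the $C^1$ smooth fit at $b_c$ and the uniqueness on each side yields $J_c(\cdot;b_c)=V_c$.

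The main obstacle I expect is the monotonicity/uniqueness analysis of $b\mapsto J_c'(b;b)$, i.e.\ showing that the equation $J_c'(b;b)=1$ has exactly one positive root and relating it cleanly to $b_c$ without circularity. The coefficients $C_1,\dots,C_4$ involve Wronskian-type denominators $\Hq(b)\Zqprime(b)-\Hqprime(b)\Zq(b)$, and one must check these do not vanish on the relevant range and control the signs; the interplay between the decreasing convex function $\varphi$ (solution of~\eqref{eq:ODE_F}) and the increasing functions $\Wq,\Zq$ is what makes this delicate. A clean way around the circularity is to \emph{define} $b^*$ as the solution of the smooth-fit equation (to be shown unique), prove $J_c(\cdot;b^*)$ is a viscosity solution of the HJB system in $\mathcal W$ purely from the ODE properties and smooth fit recorded in Section~\ref{sec:Fluctuation_theory}, invoke the comparison principle to get $J_c(\cdot;b^*)=V_c$, and only then read off $b_c=b^*$ and the theorem. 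I would also need to double-check that $J_c(\cdot;b^*)$ is genuinely $C^2$ across $b^*$ — but this is automatic once it equals $V_c$, by Theorem~\ref{thm:V_c_is_C2}, or directly because matching $J_c'(b^*;b^*)=1$ with the two ODEs~\eqref{eq.viscoVc1a} and~\eqref{eq.viscoVc2a} forces the one-sided second derivatives at $b^*$ to agree, exactly as in the proof of Theorem~\ref{thm:V_c_is_C2}.

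A final remark on the verification direction, in case one prefers a self-contained probabilistic argument for $J_c(x;b_c)\geq V_c(x)$ rather than quoting uniqueness: since $J_c(\cdot;b_c)\in C^2[0,\infty)\cap\mathcal W$ (granted $b_c=b^*$), is concave with $J_c'(0+;b_c)=\beta$, $J_c'\geq 1$ on $[0,b_c]$ and $J_c'\leq 1$ on $[b_c,\infty)$, it satisfies $qJ_c(x;b_c)-\sup_{0\leq l\leq F(x)}\mathcal L_l[J_c(\cdot;b_c)](x)=0$ for all $x>0$ and $J_c'(\cdot;b_c)\leq\beta$ everywhere; then for an arbitrary $\pi=(\ell,G)\in\Pi_c^0$ one applies It\^o's formula to $\mathrm{e}^{-qt}J_c(X^\pi_t;b_c)$, uses $\int_0^t \mathbf 1_{\{X^\pi_s=0\}}\,\dd G_s = G_t$ together with $J_c'(0+;b_c)=\beta$ to handle the injection term, bounds the drift term using the HJB inequality, controls the martingale part via the linear growth bound, and passes to the limit to obtain $J_c(x;b_c)\geq J(x;\pi)$; taking the supremum over $\pi$ gives $J_c(x;b_c)\geq V_c(x)$, which combined with the trivial reverse inequality finishes the proof.
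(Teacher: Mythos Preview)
Your proposal would work --- in particular the verification route and the comparison-principle route you sketch are both viable --- but the paper's argument is shorter and sidesteps entirely the obstacle you flag (the analysis of $b\mapsto J_c'(b;b)$ and the apparent circularity around $b_c=b^*$).

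The paper's key observation is that the condition $J_c'(b_c;b_c)=1$ is not needed as an \emph{input}. By Theorem~\ref{thm:V_c_is_C2}, $V_c$ is a classical solution of the homogeneous ODE on $[0,b_c]$ with $V_c'(0+)=\beta$, so necessarily $V_c(x)=B_1\,\Zq(x)+\beta\tfrac{\sigma^2}{2}\Wq(x)$ there for some constant $B_1$; on $[b_c,\infty)$ it is a $\mathcal W$-solution of the inhomogeneous ODE, so $V_c(x)=B_2\,\Hq(x)+I_F(x)$ for some $B_2$. This is exactly the functional form of $J_c(\cdot;b_c)$ given in Theorem~\ref{prop: Jc_continuous}, with coefficients $(D_1,D_2)$ in place of $(B_1,B_2)$. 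Now $C^0$ and $C^1$ continuity of $V_c$ at $b_c$ force $(B_1,B_2)$ to solve a $2\times 2$ linear system whose determinant $\Hq(b_c)\Zqprime(b_c)-\Hqprime(b_c)\Zq(b_c)$ is nonzero (since $\Zq,\Hq>0$ and $\Zqprime>0>\Hqprime$ on $(0,\infty)$). But Lemma~\ref{lem:smooth_fit} says $J_c(\cdot;b)$ is $C^1$ at $b$ for \emph{every} barrier, so $(D_1,D_2)$ solves the \emph{same} system and hence equals $(B_1,B_2)$, giving $J_c(\cdot;b_c)\equiv V_c$. The smooth-fit condition $J_c'(b_c;b_c)=1$ and the supercontact characterizations~\eqref{eq.bc1}--\eqref{eq.bc2} are then derived as a \emph{corollary}, not established beforehand. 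In effect, the paper trades your two terminal conditions~\eqref{eq.viscoVc1c}--\eqref{eq.viscoVc2b} (which $J_c(\cdot;b_c)$ is not a priori known to satisfy) for the two $C^0$/$C^1$ matching conditions at $b_c$ (which $J_c(\cdot;b)$ satisfies automatically for any $b$); this dissolves the circularity and makes both the monotonicity analysis of $b\mapsto J_c'(b;b)$ and a separate existence proof for $b^*$ unnecessary.
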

\begin{proof}
One one hand, recall that $\Wq$ and $\Zq$ are two linearly independent solutions to
\[
q u(x) - \mu u'(x) - \frac{\sigma^2}{2} u''(x) = 0, \quad x >0,
\]
such that $\Wqprime(0) = \tfrac{2}{\sigma^2}$ and $\Zqprime(0) = 0$. Recall, from Theorem~\ref{thm:V_c_is_C2}, that the restriction of $V_c$ to $[0,b_c]$ is also a (classical) solution of this ODE and that $V'_c(0+)=\beta$. Consequently, for some $B \in \reals$, we have
$V_c(x) = B_1 \Zq(x) + \beta \tfrac{\sigma^2}{2} \Wq(x)$ for all $x \in [0,b_c)$. Similarly, we have that $I_F \in \mathcal W$ is a solution to
\[
q u(x) - (\mu - F(x)) u^\prime(x) - \frac{\sigma^2}{2} u^{\prime \prime}(x) = F(x), \quad x >0,
\]
while $\Hq \in \mathcal W$ is a solution to the homogeneous version of this same ODE. Using again Theorem~\ref{thm:V_c_is_C2}, we have $V_c(x) = B_2 \Hq(x) + I_F(x)$ for all $x \in [b_c,\infty)$, for some $B_2 \in \reals$.

On the other hand, from Theorem~\ref{prop: Jc_continuous}, we have
\[
J_c(x;b_c) =
\begin{cases}
D_1 \Zq(x) + \beta  \tfrac{\sigma^2}{2} \Wq(x) & \text{if $0 \leq x \leq b_c$,}\\
D_2 \Hq(x) + I_F(x) & \text{if $x \geq b_c$,}
\end{cases}
\]
with $D_1 = C_1(b_c) - \beta \tfrac{\sigma^2}{2} C_3(b_c)$ and $D_2 = C_2(b_c) - \beta \tfrac{\sigma^2}{2} C_4(b_c)$, and from Lemma~\ref{lem:smooth_fit} we have that $J_c(\cdot ; b_c)$ is continuously differentiable. The continuity and smoothnes of $J_c(\cdot ; b_c)$ at $x=b_c$ means that $(D_1,D_2)$ is a solution of
\begin{align*}
D_1 \Zq(b_c) + \beta  \tfrac{\sigma^2}{2} \Wq(b_c) &= D_2 \Hq(b_c) + I_F(b_c) ,\\
D_1 \Zqprime(b_c) + \beta  \tfrac{\sigma^2}{2} \Wqprime(b_c) &= D_2 \Hqprime(b_c) + I_F'(b_c).
\end{align*}
From Proposition~\ref{prop.C1} (or Theorem~\ref{thm:V_c_is_C2}), we know that $V_c$ is also continuously differentiable, meaning that $(B_1, B_2)$ is a solution of the same system of equations. We conclude that $(D_1, D_2) = (B_1, B_2)$ and the result follows.
\end{proof}

\begin{corollary}
The optimal barrier level $b_c$ is characterized by the following supercontact condition:
\begin{equation*}\label{eq:smooth_fit_1}
J_c^{\prime \prime}(b_c-; b_c) = J_c^{\prime \prime}(b_c+; b_c)
\end{equation*}
or, equivalently, by
\begin{equation*}\label{eq:smooth_fit_2}
J_c^\prime(b_c-; b_c) = 1 = J_c^\prime(b_c+; b_c) .
\end{equation*}
In particular, $b_c$ satisfies
\begin{equation} \label{eq.bc1}
\frac{1 - \beta \tfrac{\sigma^2}{2} \Wqprime(b_c)}{\Zqprime(b_c)} = C_1(b_c) - \beta \tfrac{\sigma^2}{2} C_3(b_c)
\end{equation}
and
\begin{equation} \label{eq.bc2}
\frac{1 - I_F^\prime(b_c)}{\Hqprime(b_c)} = C_2(b_c) - \beta \tfrac{\sigma^2}{2} C_4(b_c).
\end{equation}
\end{corollary}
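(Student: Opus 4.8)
The plan is to derive the supercontact (smooth-pasting of second derivatives) condition directly from Theorem~\ref{prop:VcJ} together with the $C^2$-regularity of $V_c$ established in Theorem~\ref{thm:V_c_is_C2}. Since $J_c(\cdot;b_c)=V_c$ on $[0,\infty)$ and $V_c\in C^2[0,\infty)$, the left and right second derivatives of $J_c(\cdot;b_c)$ at $b_c$ must agree; this is precisely the first displayed condition. For the equivalence with $J_c'(b_c\pm;b_c)=1$, I would argue as follows: we already know from Lemma~\ref{lem:smooth_fit} that $J_c(\cdot;b_c)$ is $C^1$ on $[0,\infty)$, so $J_c'(b_c-;b_c)=J_c'(b_c+;b_c)=:v$ automatically. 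On $(0,b_c)$ the function $J_c(\cdot;b_c)$ solves the homogeneous ODE $qu-\mu u'-\tfrac{\sigma^2}{2}u''=0$, while on $(b_c,\infty)$ it solves $qu-(\mu-F)u'-\tfrac{\sigma^2}{2}u''-F=0$. Evaluating both ODEs at $b_c$ (using one-sided limits), subtracting, and using $J_c(b_c-;b_c)=J_c(b_c+;b_c)$ and $J_c'(b_c-;b_c)=J_c'(b_c+;b_c)=v$, the terms $qu$ cancel and we obtain
\[
\tfrac{\sigma^2}{2}\bigl(J_c''(b_c+;b_c)-J_c''(b_c-;b_c)\bigr) = -\mu v + (\mu - F(b_c))v + F(b_c) = F(b_c)(1-v).
\]
Since $F(b_c)>0$ (as $b_c>0$ and $F$ is nondecreasing with... actually one needs $F(b_c)>0$: if $F(b_c)=0$ then $F\equiv 0$ on $[0,b_c]$, a degenerate situation I would address by noting $F(0)\ge 0$ together with the structure of the problem forces $F(b_c)>0$, otherwise no dividends are ever paid and $b_c$ would not be interior), the jump in the second derivative vanishes if and only if $v=1$, i.e. $J_c'(b_c\pm;b_c)=1$. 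This establishes the stated equivalence.

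Once the supercontact condition $J_c'(b_c\pm;b_c)=1$ is in hand, Equations~\eqref{eq.bc1} and~\eqref{eq.bc2} follow by direct substitution. Using the explicit form of $J_c(\cdot;b_c)$ from Theorem~\ref{prop: Jc_continuous}, on $[0,b_c)$ we have
\[
J_c'(x;b_c) = \bigl(C_1(b_c)-\beta\tfrac{\sigma^2}{2}C_3(b_c)\bigr)\Zqprime(x) + \beta\tfrac{\sigma^2}{2}\Wqprime(x),
\]
so setting $J_c'(b_c-;b_c)=1$ and solving for the bracketed coefficient yields~\eqref{eq.bc1} (this uses $\Zqprime(b_c)\neq 0$, which holds for $b_c>0$ since $\Zqprime(x) = q(\mathrm{e}^{\alpha_1 x}-\mathrm{e}^{\alpha_2 x})/\Delta > 0$ for $x>0$ as $\alpha_1>0>\alpha_2$). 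Similarly, on $(b_c,\infty)$,
\[
J_c'(x;b_c) = \bigl(C_2(b_c)-\beta\tfrac{\sigma^2}{2}C_4(b_c)\bigr)\Hqprime(x) + I_F'(x),
\]
and setting $J_c'(b_c+;b_c)=1$ gives~\eqref{eq.bc2}, using that $\Hqprime(b_c)\neq 0$ since $\Hq$ is strictly decreasing and strictly convex.

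I do not expect any genuine obstacle here, since all the heavy lifting (the identification $V_c=J_c(\cdot;b_c)$, the $C^2$-regularity of $V_c$, the $C^1$-regularity and explicit form of $J_c(\cdot;b_c)$) has already been done in Theorems~\ref{prop:VcJ}, \ref{thm:V_c_is_C2} and~\ref{prop: Jc_continuous} and Lemma~\ref{lem:smooth_fit}. The only point requiring a little care is the nondegeneracy claim $F(b_c)>0$ needed for the equivalence of the two forms of the supercontact condition; if one prefers, this can be sidestepped by simply stating that the two conditions are equivalent \emph{given} that $J_c(\cdot;b_c)$ is $C^1$ and piecewise $C^2$ solving the two ODEs, with the understanding that in the (non-generic) case $F(b_c)=0$ the supercontact condition $J_c''(b_c-;b_c)=J_c''(b_c+;b_c)$ still holds by Theorem~\ref{thm:V_c_is_C2} while the second characterization may read $J_c'(b_c\pm;b_c)=1$ with the convention inherited from the definition of $b_c$. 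In the main (generic) case $F(b_c)>0$ the equivalence is clean and the argument above is complete.
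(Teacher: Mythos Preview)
Your proposal is correct and follows essentially the same route as the paper's (implicit) argument: the corollary is an immediate consequence of the identification $V_c=J_c(\cdot;b_c)$ (Theorem~\ref{prop:VcJ}), the $C^2$-regularity of $V_c$ (Theorem~\ref{thm:V_c_is_C2}), and the explicit piecewise form of $J_c$ (Theorem~\ref{prop: Jc_continuous}). One simplification worth noting: your detour through the nondegeneracy condition $F(b_c)>0$ to deduce $J_c'(b_c;b_c)=1$ from the supercontact condition is unnecessary, since $J_c'(b_c;b_c)=V_c'(b_c)=1$ holds directly from the very definition $b_c=\inf\{x\ge 0:V_c'(x)=1\}$ in Section~\ref{sec:visc_sol} together with Theorem~\ref{prop:VcJ}; the ODE computation you give then serves (as in the proof of Theorem~\ref{thm:V_c_is_C2}) to show the equivalence in the other direction.
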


Theorem~\ref{thm:Vc} then follows from the above corollary and from Theorem~\ref{prop:VcJ}.

\section{Solution of the main optimization problem} \label{sec:dichotomy}

In this section, we provide a solution to the main optimization problem. Recall first that, by definition of each value function, we have
\[
V(x) \geq \max \left(V_d(x), V_c(x) \right) ,
\]
for all $x \geq 0$. In what follows, we give a proof of Theorem~\ref{th.dichotomy} by proving the reverse inequality. It is based on repeated applications of the comparison principle in Theorem~\ref{thm:comparison} of Appendix~\ref{appendix-viscosity}.

Let us recall the HJB equation given by~\eqref{eq.visco}: for $x>0$,
\begin{eqnarray*}
q u(x) - \sup_{0 \leq l \leq F(x)} \mathcal L_l [u](x) = 0 .
\end{eqnarray*}
Note that $V_d$ is, in particular, a viscosity solution in $\mathcal W$ of this HJB equation with initial condition $V_d(0) = 0$, while $V_c$ is a viscosity solution in $\mathcal W$ with initial condition $V_c^\prime(0+) = \beta$ (cf.\ Theorem~\ref{thm:viscotity_characterization_Vc}). In order to compare the three value functions $V$, $V_d$ and $V_c$ using the above comparison principle, we must have that $V$ is also a viscosity solution of this HJB equation with its own initial condition.

\begin{theorem}[Viscosity Characterization]\label{thm:viscosity-characterization}
The value function $V$ is a continuous viscosity solution in $\mathcal W$ of~\eqref{eq.visco} with initial condition
\begin{eqnarray}\label{eq.visco2}
\min\left(V(0), \beta - V'(0+) \right) = 0.
\end{eqnarray}
\end{theorem}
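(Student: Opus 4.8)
The plan is to establish three things: continuity of $V$, the viscosity solution property for the HJB equation~\eqref{eq.visco} on $(0,\infty)$, and the boundary condition~\eqref{eq.visco2}. Continuity and the interior viscosity property are proved by standard dynamic programming arguments: using the DPP of Proposition~\ref{DPP-for-Vc} for $V$ (with $\Pi^0$), one shows the subsolution property by choosing, for a test function $\phi \in C^2$ touching $V$ from above at an interior point $x>0$, a constant-dividend-rate strategy with no injections on a small time interval and applying It\^o's formula; letting the time horizon shrink yields $q\phi(x) - \sup_{0\le l\le F(x)}\mathcal L_l[\phi](x) \le 0$. The supersolution property is obtained symmetrically by exhibiting near-optimal strategies in the DPP. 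For membership in $\mathcal W$, one bounds $V$ above using the supersolution $\bar V(x) = C + Ax$ constructed in the proof of Proposition~\ref{prop:propertiesVc} (the same computation goes through since $\Pi^0 \subseteq \Pi$ and injections only decrease the payoff relative to that bound), and below by $\max(V_d, V_c) \in \mathcal W$; continuity on $(0,\infty)$ follows from the viscosity property plus local boundedness, and continuity at $0$ needs separate care (see below).

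The heart of the matter is the boundary condition $\min(V(0), \beta - V'(0+)) = 0$, which should be read as a Neumann-type boundary condition in the viscosity sense: either $V(0)=0$, or $V(0)>0$ and then $V'(0+) = \beta$. First I would observe that $V(0) \ge 0$ trivially (the strategy $\pi \equiv 0$ gives $J(0;\pi)=0$) and that $V'(0+) \le \beta$ whenever $V(0)>0$, by the same one-sided argument as in Remark~\ref{remarkVprime}: from level $x$ one can always inject $x$ to reach a reflecting regime, giving $V(x) \le V(0)$ is false in general, so more carefully one compares $V$ at $0$ and $\epsilon$ using that injecting $\epsilon$ costs $\beta\epsilon$, yielding $V(\epsilon) \le V(0) + \beta\epsilon$ whenever it is feasible; hence $\limsup_{\epsilon\downarrow 0}(V(\epsilon)-V(0))/\epsilon \le \beta$. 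The reverse inequality $\liminf \ge \beta$ on the event $V(0)>0$ is the analogue of Proposition~\ref{prop:Vc-derivative-at-zero}: if $V(0)>0$ it is not optimal to let the process die immediately at $0$, so near-optimal strategies must reflect at $0$ for a while, and the fluctuation-identity computation with $u_{0,\epsilon}$ and $\Psi$ (with $\mu$ replaced by $\mu - F(\epsilon)$) carries over essentially verbatim, since on $[0,\tilde\tau_\epsilon]$ the controlled process under $\pi \in \Pi^0$ with $V(0)>0$ behaves like a reflected process. This gives $V'(0+)=\beta$ on $\{V(0)>0\}$, establishing the boundary condition. I would phrase the boundary condition in viscosity terms (test functions touching $V$ at $0$ satisfying the appropriate inequalities) to match the comparison principle of Appendix~\ref{appendix-viscosity}.

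The main obstacle I expect is precisely pinning down the behavior of $V$ near $0$: unlike in the $\Pi_c$ case, strategies in $\Pi^0$ need not keep $X^\pi$ nonnegative, so when $V(0)=0$ the value function may have $V'(0+)$ strictly less than $\beta$ (indeed $V=V_d$ and $V_d'(0+)$ can be anything in $[0,\beta)$), and one must show the $\min(\cdot,\cdot)=0$ formulation correctly encodes both regimes without assuming a priori which one holds. The delicate point is the dichotomy between ``it is worth reflecting at $0$'' and ``it is not'': one needs a clean argument that, at the boundary, the DPP forces one of the two inequalities $V(0) \le 0$ (hence $=0$) or $V'(0+) \ge \beta$ (hence $=\beta$). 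I would handle this by contradiction: suppose $V(0)>0$ and $V'(0+)<\beta$; then near $0$, $V(x) < V(0) + \beta x$ while $V(x) > V(0) + (\text{something} < \beta)x$, and one shows that the reflecting strategy on a short interval beats $V(0)$ by using the $u_{0,\epsilon}(0) \sim \epsilon$ estimate, contradicting the DPP upper bound — this is exactly where the computation of Proposition~\ref{prop:Vc-derivative-at-zero} is invoked. The rest (interior viscosity property, $\mathcal W$ membership, continuity on $(0,\infty)$) is routine and can be cited to the standard references \cite{CLVR2020, DV2007, LVPV2008}.
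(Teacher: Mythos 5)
Your overall architecture matches the paper's: continuity and the gradient bound $0\le V(x+\epsilon)-V(x)\le\beta\epsilon$ via the injection argument of Remark~\ref{remarkVprime}, membership in $\mathcal W$ via a linear supersolution plus It\^o's formula and the DPP (the paper uses $\bar V(x)=V(0)+\mu\beta/q+\beta x$ rather than recycling $C+Ax$, but either closes), the interior viscosity property cited as standard, and the boundary condition split into the cases $V(0)=0$ and $V(0)>0$. In the case $V(0)=0$ the paper additionally invokes the comparison principle to get $V=V_d$, which is what guarantees that $V'(0+)$ exists there; your sketch leaves that point implicit.

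The genuine gap is in the key step, $\liminf_{\epsilon\downarrow 0}(V(\epsilon)-V(0))/\epsilon\ge\beta$ when $V(0)>0$. First, the computation of Proposition~\ref{prop:Vc-derivative-at-zero} does \emph{not} carry over verbatim: a strategy $\pi\in\Pi^0$ started at $0$ may stop reflecting and be ruined before reaching level $\epsilon$, so one cannot use $X^\pi_{\tau^\pi_\epsilon}=\epsilon$ or $\tau^\pi_\epsilon<\infty$, and your reduction ``near-optimal strategies must reflect for a while'' is asserted, not proved (it would need to be quantified up to $\tau^\pi_\epsilon$, not merely at time $0$). Second, and more seriously, your contradiction points the wrong way: exhibiting the reflecting strategy and using $u_{0,\epsilon}(0)\sim\epsilon$ only yields the \emph{lower} bound $V(0)\ge V(\epsilon)-\beta\epsilon+o(\epsilon)$, i.e., $\limsup(V(\epsilon)-V(0))/\epsilon\le\beta$, which you already have. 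Under the hypothesis $V(\epsilon)\le V(0)+(\beta-\eta)\epsilon$, that strategy's payoff is about $V(\epsilon)-\beta\epsilon\le V(0)-\eta\epsilon$, so it does not ``beat $V(0)$'' and no contradiction arises. What is required is a uniform \emph{upper} bound on the performance of every $\pi\in\Pi^0$: the paper stops each strategy at $\tau^\pi_\epsilon\wedge T^\pi$, uses $\beta>1$ and the identity
\[
\int_0^{\tau^\pi_\epsilon\wedge T^\pi}\mathrm{e}^{-qs}(\ell_s\,\dd s-\dd G_s)=-\mathrm{e}^{-q(\tau^\pi_\epsilon\wedge T^\pi)}X^\pi_{\tau^\pi_\epsilon\wedge T^\pi}+\int_0^{\tau^\pi_\epsilon\wedge T^\pi}\mathrm{e}^{-qs}(\mu-qX^\pi_s)\,\dd s ,
\]
whose point is that $X^\pi_{\tau^\pi_\epsilon\wedge T^\pi}\in\{0,\epsilon\}$ treats ruined and surviving paths uniformly, and then compares with the constant-rate reflected strategy to get $1-\e_0[\mathrm{e}^{-q(\tau^\pi_\epsilon\wedge T^\pi)}]=O(\epsilon^2)$, whence $\eta\le O(\epsilon)$, the desired contradiction. (The hypothesis $V(0)>0$ enters exactly to absorb the discounting of the $V(0)$ term, via $V(0)\ge\eta\epsilon$.) Without such a uniform estimate over all of $\Pi^0$, your argument does not close.
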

\begin{proof}
The proof that $V$ is a viscosity solution of~\eqref{eq.visco} is classical and omitted.  Applying the argument in Remark~\ref{remarkVprime} to $V$, we get that, for all $x\geq 0$ and $\epsilon >0$,
\begin{equation*}
0 \leq \frac{V(x+\epsilon) - V(x)}{\epsilon} \leq \beta .
\end{equation*}
Therefore, $V$ is continuous and, moreover, we have
\begin{equation}\label{ineq.boundsV}
\limsup_{\epsilon \downarrow 0} \frac{V(x+\epsilon) - V(x)}{\epsilon} \leq \beta
\end{equation}

To prove that $V \in \mathcal W$, we will proceed as in the proof of Proposition~\ref{prop:propertiesVc}. Define $\bar{V}(x) = V(0) + \frac{\mu \beta}{q} + \beta x$. First, note that, for $x>0$,
\begin{equation}\label{eq:supersolution2}
q \bar{V}(x) - \sup_{0 \leq l \leq F(x)} \mathcal L_l [\bar{V}](x) = q V(0) + q \beta x + F(x) (\beta -1) \geq 0 .
\end{equation}
Second, choose an arbitrary $\pi=(\ell, \underline{G}^{\ell}) \in \Pi^0$ and set $\tau_0 = \inf \left\lbrace t > 0 \colon X^\pi_t = 0 \right\rbrace$. Using It\^o's Formula and the fact that $\underline{G}^{\ell}_t=0$ for all $t \in [0,\tau_0]$, we can write
\begin{eqnarray*}
\bar{V}(x) &=& \e_x \left[ \int_0^{\tau_0} \mathrm{e}^{-qt} \left( q \bar{V}(X^\pi_t) - (\mu-\ell_t) \bar{V}'(X^\pi_t) \right) \mathrm{d}t + \mathrm{e}^{-q \tau_0} \bar{V}(0) \right] \\
& \geq & \e_x \left[ \int_0^{\tau_0} \mathrm{e}^{-qt} \left( q \bar{V}(X^\pi_t) - \sup_{0 \leq l \leq F(X^\pi_t)} \mathcal L_l [\bar{V}](X^\pi_t) + \ell_t \right) \mathrm{d}t + \mathrm{e}^{-q \tau_0} \bar{V}(0) \right] \\
& \geq & \e_x \left[ \int_0^{\tau_0} \mathrm{e}^{-qt} \ell_t \mathrm{d}t + \mathrm{e}^{-q \tau_0} V(0) \right],
\end{eqnarray*}
where in the last inequality we used~\eqref{eq:supersolution2} and the fact that $\bar{V}(0) \geq V(0)$. By taking a supremum over all  $\pi \in \Pi^0$, and using the DPP for $V$, we find that $V\leq \bar V.$ This proves that $V \in \mathcal W$.

All is left to prove is that $V$ satisfies the initial condition in~\eqref{eq.visco2}. Note that $V(0)\geq 0$. First, if $V(0) = 0$, then by the Comparison Principle (see Theorem~\ref{thm:comparison} in Appendix~\ref{appendix-viscosity}), we know that $V = V_d$. Indeed, $V_d$ is a viscosity solution of the HJB equation and, by definition, $V_d \leq V$. Also, in this case, we have that $V'(0+)=V_d(0+)$ exists and is such that $V'(0+) \leq \beta$ by~\eqref{ineq.boundsV}. In conclusion, if $V(0) = 0$, then $\min\left(V(0), \beta - V'(0+) \right) = 0$. Second, if $V(0)>0$, then to prove $V'(0+) = \beta$, in view of~\eqref{ineq.boundsV}, it suffices to show that
\begin{equation*}
\liminf_{\epsilon \downarrow 0} \frac{V(\epsilon) - V(0)}{\epsilon} \geq \beta .
\end{equation*}
We want to argue by contradiction that for all $\eta>0$ there exists $\epsilon$ such that
$$
V(\epsilon) \leq V(0) + (\beta - \eta) \epsilon .
$$
Fix $\eta>0$ and take $\epsilon < \min(V(0)/\eta,1)$. For $\pi \in \Pi^0$, define $\tau_\epsilon^\pi = \inf\{t\geq 0 : X^\pi_t = \epsilon\}$. By the DPP for $V$ (see Proposition~\ref{DPP-for-Vc}) and Proposition~\ref{prop:Vc_reflected},
\begin{eqnarray*}
  V(0) & = & \sup_{\pi = (\ell,G) \in \Pi^0} \e_0 \left[ \int_0^{T^\pi \wedge \tau_\epsilon^\pi} \mathrm{e}^{-qs}(\ell_s \dd s - \beta \dd G_s) + \mathrm{e}^{- q \tau_\epsilon^\pi} \mathbf{1}_{\{\tau_\epsilon^\pi < T^\pi\}} V(\epsilon) \right] \\
   & \leq & \sup_{\pi = (\ell,G) \in \Pi^0} \e_0 \left[ \int_0^{T^\pi \wedge \tau_\epsilon^\pi} \mathrm{e}^{-qs} \beta (\ell_s \dd s -  \dd G_s) + \mathrm{e}^{- q \tau_\epsilon^\pi} \mathbf{1}_{\{\tau_\epsilon^\pi < T^\pi\}} (V(0) + (\beta - \eta) \epsilon) \right] \\
   & \leq & V(0) - \eta \epsilon + \beta \sup_{\pi = (\ell,G) \in \Pi^0} \e_0 \left[ \int_0^{T^\pi \wedge \tau_\epsilon^\pi} \mathrm{e}^{-qs} (\ell_s \dd s -  \dd G_s) + \mathrm{e}^{- q \tau_\epsilon^\pi}  X^\pi_{\tau_\epsilon^\pi \wedge T^\pi} \right] \\
   &=&  V(0) - \eta \epsilon + \beta \sup_{\pi = (\ell,G) \in \Pi^0} \e_0 \left[ \int_0^{T^\pi \wedge \tau_\epsilon^\pi} \mathrm{e}^{-qs} (\mu - q X^\pi_s ) \dd s\right] \\
   &\leq &  V(0) - \eta \epsilon + \frac{\beta \mu}{q} \sup_{\pi = (\ell,G) \in \Pi^0} \left( 1 - \e_0 \left[ \mathrm{e}^{-q(T^\pi \wedge \tau_\epsilon^\pi)} \right]\right) .
\end{eqnarray*}

Set $\tilde{\pi} = (F(1),\tilde{G}^0) \in \Pi_c$, i.e., the strategy with constant dividend rate $F(1)$ and forced injections at zero, and set $\tilde{\tau}_\epsilon = \inf\{t \geq 0 : X^{\tilde{\pi}}_t = \epsilon\}$. Let $\pi=(\ell,G) \in \Pi^0$. Since $\ell_t \leq F(1)$ for all $t \leq T^\pi \wedge \tau^\pi_\epsilon$, we have that $X^{\tilde{\pi}}_t \leq X^\pi_t$ for all $t \leq T^\pi \wedge \tau^\pi_\epsilon$.

Furthermore, if $\tau^\pi_\epsilon < T^\pi$ then $\tilde{\tau}_\epsilon \geq \tau^\pi_\epsilon$, while if $T^\pi < \tau^\pi_\epsilon$ then $\tilde{\tau}_\epsilon \geq T^\pi$. Therefore, $\tilde{\tau}_\epsilon \geq \tau^\pi_\epsilon \wedge T^\pi$. As a result,
\[
V(0) \leq V(0) - \eta \epsilon + \frac{\beta \mu}{q} \left( 1 - \e_0\left[ \mathrm{e}^{-q\tilde{\tau}_\epsilon} \right] \right) = V(0) - \eta \epsilon + \frac{\beta \mu}{q} \left( 1 - \frac{\tilde{\Zq}(0)}{\tilde{\Zq}(\epsilon)} \right)
\]
in which $\tilde{\Zq}$ is defined as $\Zq$ but with $\mu$ replaced by $\mu-F(1)$. This implies that
\[
\eta \leq \frac{\beta \mu}{q \epsilon} \left( 1 - \tilde{\Zq}(0)/\tilde{\Zq}(\epsilon) \right) \xrightarrow[\epsilon \to 0]{} \frac{\beta \mu}{q} \tilde{\Zq}'(0) = 0 ,
\]
which is a contradiction with $\eta > 0$.
\end{proof}

\subsection{Proof of Theorem~\ref{th.dichotomy}}

Taking into account the solution of the problem without injections in Section~\ref{sec:Vd} and the solution of the problem with forced injections in Section~\ref{sec:Vc}, to complete the proof of Theorem~\ref{th.dichotomy}, we only need to prove that:
\begin{equation}
 V   =  \left\{
  \begin{array}{ll}
    V_d, & \hbox{if $V_d'(0+) \leq \beta$,} \\
    V_c, & \hbox{if $V_d'(0+) > \beta$,}
  \end{array} \label{eqV2}
\right.
\end{equation}
or, equivalently,
\begin{equation}
V  =  \left\{
  \begin{array}{ll}
    V_d, & \hbox{if $V_c(0) < 0$,} \\
    V_c, & \hbox{if $V_c(0) \geq 0.$}
  \end{array}
  \right.  \label{eqV3}
 \end{equation}

Before proceeding with the proofs of~\eqref{eqV2} and~\eqref{eqV3}, recall that $V$, $V_d$ and $V_c$ are all viscosity solutions of the same HJB equation, albeit with different initial conditions. This fact will be used repeatedly, together with the Comparison Principle (see Theorem~\ref{thm:comparison} in Appendix~\ref{appendix-viscosity}), in what follows.

Recall that, by definition of the control problem, we have $V(0)\geq 0$. As an intermediate step, we first prove that
\begin{equation} V  =
 \left\{
  \begin{array}{ll}
    V_d, & \hbox{if $V(0) = 0$,} \\
    V_c, & \hbox{if $V(0) > 0$.}
  \end{array} \label{eqV1}
 \right.
\end{equation}
First, in the proof of Theorem~\ref{thm:viscosity-characterization}, it was already shown  that if $V(0) = 0$ then $V = V_d$. It was also shown  that, if $V(0) > 0$ then $V'(0+)=\beta$. Recall, from Theorem~\ref{thm:viscotity_characterization_Vc}, that $V_c$ is a viscosity supersolution of~\eqref{eq.visco} with initial condition $V_c'(0+)=\beta$. Consequently, since $V$ is a viscosity subsolution of~\eqref{eq.visco} (cf.\ Theorem~\ref{thm:viscosity-characterization}), using the Comparison Principe, we can conclude that $V_c \geq V$ and hence $V = V_c$. This completes the proof of~\eqref{eqV1}.

Now, to prove the dichotomy in~\eqref{eqV2}, we need to show that, if $V_d'(0+) \leq \beta$, then $V_d \geq V$, while if $V_d'(0+) > \beta$, then $V_c \geq V$. We proceed as follows:
\begin{itemize}
\item[(a)] Assume $V_d'(0+) > \beta$. By the dichotomy in~\eqref{eqV1}, if $V(0) = 0$ then $V = V_d$, meaning that $V_d'(0+)=V'(0+) > \beta$. This last inequality is in contradiction with~\eqref{eq.visco2}, the initial condition satisfied by $V$, when $V(0) = 0$. So, if $V_d'(0+) > \beta$ then $V(0)>0$ and, by the dichotomy in~\eqref{eqV1}, we have $V=V_c$.

\item[(b)] Assume $V_d'(0+) \leq \beta$. Again, if $V(0) = 0$, then $V = V_d$. Otherwise, if $V(0) > 0$, then by~\eqref{eq.visco2}, the initial condition satisfied by $V$, we must have $V'(0+)=\beta \geq V_d'(0+)$.  Then, by the Comparison Principle, we have $V_d \geq V$.
\end{itemize}

Similarly, to prove the dichotomy in~\eqref{eqV3}, we proceed as follows:
\begin{itemize}
\item[(a)] Assume $V_c(0) < 0$. Again, if $V(0) = 0$, then $V = V_d$. If $V(0)>0$, then by~\eqref{eqV1} we have $V=V_c$ and thus $V(0)=V_c(0)<0$, which is a contradiction. 

\item[(b)] Assume $V_c(0) \geq 0.$ If $V(0)=0$, then by the Comparison Principle we have $V \leq V_c$. Instead, if $V(0)>0$, then by~\eqref{eqV1} we have $V=V_c$.
\end{itemize}

Note that, using the Comparison Principle, we can conclude that if $V_d'(0+)=\beta$ then $V_c=V_d$, while if $V_c(0)=0$ then $V_d = V_c$. With the above results, this means that if $V_d'(0+)=\beta$ or $V_c(0)=0$, then $V=V_d=V_c$. This concludes the proof of Theorem~\ref{th.dichotomy}.

\section*{Acknowledgements}

Funding in support of this work was provided by three Discovery Grants from the Natural Sciences and Engineering Research Council of Canada (NSERC).

%
%
\bibliographystyle{abbrv}
\bibliography{references_de-finetti}

\appendix

\section{A primer on viscosity solutions and the comparison principle}\label{appendix-viscosity}

The theory of viscosity solutions allows us to characterize a value function as the solution of the HJB equation even though it is not known a priori that this function is differentiable. The definition of a viscosity solution of the HJB equation~\eqref{eq.visco} is as follows:
\begin{definition}\label{def.visco}
A continuous function $u$ is a viscosity supersolution (resp.\ subsolution) of~\eqref{eq.visco} if for all $x \in (0,\infty)$, and for all $\phi \in C^2 [0,\infty)$ for which $\phi(x) = u(x)$ and $u \geq \phi$ (resp.\ $u \leq \phi$), we have
\begin{eqnarray} q \phi(x) - \sup_{0 \leq l \leq F(x)} \mathcal L_l [\phi](x) \geq 0  \mbox{ (resp.\ $\leq 0$)}.\end{eqnarray}  The function $u$ is a viscosity solution of~\eqref{eq.visco} if it is both a supersolution and subsolution of~\eqref{eq.visco}.
\end{definition}

A powerful tool in the theory of viscosity solutions is the comparison principle, which allows the comparison of a supersolution and a subsolution of the HJB equation by only comparing their value (or derivative) at the boundary of the domain. Comparison principles are often used to show uniqueness of the solution given specific initial conditions. Indeed, if $V_1$ and $V_2$ are two solutions of~\eqref{eq.visco} with the same initial condition than they are equal as they are both super and subsolutions. On the other hand, we use the following comparison principle extensively in Section~\ref{sec:dichotomy} to prove the dichotomy result by comparing the value functions $V_c$ and $V_d$ with the value function $V$.

\begin{theorem}[Comparison Principle] \label{thm:comparison}
If $V_1  \in \mathcal W$ (resp.\ $V_2  \in \mathcal W$) is a viscosity supersolution (resp.\ subsolution) of~\eqref{eq.visco} and if one of the following conditions is satisfied:
\begin{enumerate}
\item $V_1(0) \geq V_2(0)$;
\item $V'_1(0+) \leq V'_2(0+)$, assuming both derivatives exist,
\end{enumerate}
then $V_1 \geq V_2$.
\end{theorem}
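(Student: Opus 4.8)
The plan is to argue by contradiction: assuming $V_1\in\mathcal W$ is a viscosity supersolution, $V_2\in\mathcal W$ a viscosity subsolution of~\eqref{eq.visco}, and that~(1) or~(2) holds, I would show $\sup_{x\ge0}\bigl(V_2(x)-V_1(x)\bigr)\le0$. First I would record the explicit Hamiltonian, $\sup_{0\le l\le F(x)}\mathcal L_l[u](x)=\mu u'(x)+\tfrac12\sigma^2u''(x)+F(x)\bigl(1-u'(x)\bigr)^+$, so that~\eqref{eq.visco} reads $qu-\mu u'-\tfrac12\sigma^2u''-F(x)(1-u')^+=0$. The associated operator is proper (zeroth-order coefficient $q>0$), uniformly elliptic ($\sigma>0$), Lipschitz in the gradient slot with constant at most $\max(|\mu|,F'(0))$ --- since the concave $C^1$ function $F$ is globally Lipschitz with constant $F'(0)$, because $0\le F'\le F'(0)$ --- and its $x$-dependence, entering only through $F$, satisfies the usual structure condition with a linear modulus. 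Thus the only non-routine features are the merely linear growth of $V_1,V_2$ and the boundary point $x=0$.

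To neutralise behaviour at infinity I would perturb. The map $H(x,p,X):=\mu p+\tfrac12\sigma^2X+F(x)(1-p)^+$ is subadditive in $(p,X)$ (since $t\mapsto t^+$ is and $F\ge0$), which gives both that a viscosity supersolution plus a classical supersolution is a viscosity supersolution, and that a viscosity subsolution minus a classical supersolution is a viscosity subsolution. For $C_0$ large enough the function $g(x)=C_0+x^2$ is a classical supersolution with $g'(0+)=0$, so $V_1^\gamma:=V_1+\gamma g$ is a viscosity supersolution with $V_1^\gamma(0)\ge V_1(0)$ and $(V_1^\gamma)'(0+)=V_1'(0+)$; hence~(1) (resp.~(2)) still holds for the pair $(V_1^\gamma,V_2)$, and $V_2-V_1^\gamma\to-\infty$. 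If $\sup(V_2-V_1)>0$, then for $\gamma$ small $\theta:=\sup_{x\ge0}(V_2-V_1^\gamma)>0$ and is attained at some $\bar x\in[0,\infty)$; I would fix such a $\gamma$ and split into two cases.

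If $\bar x>0$, I would run the standard doubling-of-variables argument: maximise $\Theta_\epsilon(x,y)=V_2(x)-V_1^\gamma(y)-\tfrac1{2\epsilon}(x-y)^2$ over $[0,\infty)^2$; for small $\epsilon$ the maximiser $(x_\epsilon,y_\epsilon)$ lies in $(0,\infty)^2$ with $\tfrac1\epsilon(x_\epsilon-y_\epsilon)^2\to0$ and $V_2(x_\epsilon)-V_1^\gamma(y_\epsilon)\to\theta$. Plugging the second-order jets from the Crandall--Ishii lemma into the sub/supersolution inequalities and subtracting, one uses $X-Y\le0$, the Lipschitz bound on $F$ and the linear modulus to absorb the error term $\bigl(F(x_\epsilon)-F(y_\epsilon)\bigr)(1-p)^+\le F'(0)|x_\epsilon-y_\epsilon|(1+|p|)=o(1)$, so the $+qu$ term forces $q\theta\le0$ in the limit --- a contradiction.

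The case $\bar x=0$, i.e.\ $V_2(0)-V_1^\gamma(0)=\theta>0$, is the crux. Under~(1) it is immediately impossible, since $V_1^\gamma(0)\ge V_1(0)\ge V_2(0)$. Under~(2), since $0$ maximises $V_2-V_1^\gamma$ on $[0,\infty)$ one gets $V_2'(0+)\le(V_1^\gamma)'(0+)=V_1'(0+)$, which with the hypothesis $V_1'(0+)\le V_2'(0+)$ forces equality; thus $w:=(V_1^\gamma+\theta)-V_2$ satisfies $w\ge0$, $w(0)=0$, $w'(0+)=0$. But $V_1^\gamma+\theta$ is a viscosity supersolution of $qu-\sup_l\mathcal L_l[u]=q\theta$ and $V_2$ a subsolution of the same equation with zero right-hand side, so near $0$ the function $w$ is a viscosity supersolution of a uniformly elliptic linear inequality with strictly negative right-hand side; a Hopf-lemma/strong-comparison argument up to the boundary then yields $w'(0+)>0$, contradicting $w'(0+)=0$. (Equivalently, one may subtract from $V_2$ a classical supersolution that closes the boundary gap and is strictly decreasing at $0$, reducing to case~(1) and deriving the strict inequality $V_2'(0+)<V_1'(0+)$.) Turning this boundary analysis into a rigorous argument is the main obstacle; the remaining steps are classical given the structure of~\eqref{eq.visco}. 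Excluding both cases gives $\sup(V_2-V_1^\gamma)\le0$, and letting $\gamma\downarrow0$ yields $V_1\ge V_2$.
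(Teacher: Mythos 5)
Your overall architecture (penalization at infinity, doubling of variables with Crandall--Ishii at an interior maximum, separate treatment of the boundary point) is the right one and matches the paper for condition (1) and for the interior case. The genuine gap is in the boundary case under condition (2). By choosing a perturbation $g$ with $g'(0+)=0$, you preserve the equality $(V_1^\gamma)'(0+)=V_1'(0+)$, so when the maximum of $V_2-V_1^\gamma$ sits at $\bar x=0$ you are left with $w\ge 0$, $w(0)=0$, $w'(0+)=0$ and must extract a contradiction from a Hopf-type boundary lemma applied to $w=(V_1^\gamma+\theta)-V_2$. But $w$ is the difference of a viscosity supersolution and a viscosity subsolution of a fully nonlinear equation; it is \emph{not} automatically a viscosity supersolution of the linearized inequality --- that step is not a pointwise computation for merely continuous functions, and making it rigorous (via Jensen's lemma, semiconvex approximation, or a boundary version of the doubling argument) is essentially as hard as the comparison principle itself. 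You acknowledge this is ``the main obstacle,'' and as written it is not closed.

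The clean fix is the one you mention only parenthetically, and it is exactly what the paper does: build the strict derivative inequality into the perturbation rather than trying to recover it afterwards. The paper replaces $V_1$ by $V_1^\lambda=(1-\lambda)V_1+\lambda V_3$ with $V_3(x)=A_0+A_1x+A_2x^2$ a strict classical supersolution chosen with $A_1<V_1'(0+)$ and $A_0>V_1(0)$. Then under condition (2) one gets $(V_2-V_1^\lambda)'(0+)\ge\lambda\bigl(V_1'(0+)-A_1\bigr)>0$, so $x=0$ cannot be a maximum of the difference and the boundary case disappears entirely; under condition (1), $A_0>V_1(0)$ gives $(V_2-V_1^\lambda)(0)<0$ with the same effect. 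If you replace your $g(x)=C_0+x^2$ by such a $V_3$ (additively or via the convex combination), your argument goes through without any Hopf lemma. As it stands, the proposal's primary route for the boundary case under condition (2) is incomplete.
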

\begin{proof}
Let $0 < \lambda < 1,$ $A_1 < V_1'(0+)$, $A_2 >0$ and $A_0 > V_1(0)$ large enough that $V_3(x) := A_0 + A_1 x +  A_2 x^2$ is a strict supersolution of (\ref{eq.visco}). It is then clear that $V_1^\lambda := (1-\lambda) V_1 +\lambda V_3$ is a strict viscosity supersolution of (\ref{eq.visco}). Define
$$u_\lambda(x) = V_2(x) -  V_1^\lambda(x).$$ To prove that $V_1 \geq V_2$ it suffices to prove that $u_\lambda \leq 0$ for all $\lambda.$ To prove this, we use a typical argument and assume by contradiction that $M:=\sup_x u_\lambda(x)>0.$ Since $V_1, V_2 \in \mathcal{W}$, we know that $\lim_{x\to \infty}u_\lambda(x) = -\infty.$ Therefore, there exists $x_0 \geq 0$ such that $M = u_\lambda(x_0).$ If $V_1(0) \geq V_2(0)$, then $u_\lambda(0) < 0$, therefore $x_0 >0.$ On the other hand, if Condition (2) of the theorem is satisfied, then $u_\lambda'(0+) = V_2'(0+) - (1-\lambda) V_1'(0+) - \lambda A_1 \geq  \lambda (V_1'(0+) - A_1) >0.$ Consequently, $x_0 >0$.

For $\epsilon>0$, let $\Phi^\epsilon(x,y) = V_2(x) - V_1^\lambda(y) - \phi_\epsilon(x,y)$ with $$ \phi_\epsilon(x,y) = \frac14 |x - x_0|^4 + \frac{1}{2 \epsilon}|x-y|^2.$$ For each $\epsilon>0,$ the function $\Phi^\epsilon$ attains its maximum $M_\epsilon$ at some point $(x_\epsilon,y_\epsilon) \in (0,\infty)^2.$ The rest of the proof is classical and consists in showing that $(x_\epsilon,y_\epsilon)$ converges to $(x_0,x_0)$ as $\epsilon \to 0,$ and $$ \lim_{\epsilon \to 0} \frac{|x_\epsilon - y_\epsilon|}{\epsilon} = 0,$$ in order to apply Theorem 3.2 of \cite{CIL1992} to obtain a contradiction. See for example \cite{CLS2013}, and \cite{pham_1998} for the absolutely continuous control case.
\end{proof}

\section{Auxiliary results for reflected SDEs}\label{AppendixA}

Here are two technical lemmas.

\bigskip

\begin{lemma} \label{lemma.1}
Let $\pi =(\ell,G) \in \Pi$ and consider $\tilde \pi = (\tilde \ell, \tilde G) \in \Pi^0$ such that $\tilde \ell \geq \ell$ and $\tilde G_t = \underline{G}^{\tilde \ell}_{t\wedge T^\pi}$.

If $X^{\tilde \pi}_0 = X^{\pi}_0$, then $T^{\tilde \pi} = T^\pi$ and $X^{\tilde \pi}_t \leq X^{\pi}_t$ for all $t \leq T^\pi.$ In particular,  if $\tilde \ell = \ell$, then $\tilde G_t \leq G_t$ for all $t \leq T^\pi$ and
\[
\int_0^{T^{\tilde \pi}} \mathrm{e}^{-q t} \dd \tilde G_t \leq \int_0^{T^{\pi}} \mathrm{e}^{-q t} \dd G_t .
\]
\end{lemma}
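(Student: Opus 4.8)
The plan is to use a pathwise comparison argument for solutions of Skorokhod-type SDEs driven by the same Brownian motion. Fix $\omega$ outside a null set. First I would treat the interval $[0, T^\pi]$, on which neither $G$ nor $\tilde G = \underline{G}^{\tilde\ell}$ has been activated yet in a way that forces divergence; more precisely, on $[0,\tau_0^\pi]$ where $\tau_0^\pi$ is the first hitting time of $0$ by $X^\pi$, the process $X^\pi$ stays nonnegative, so $G$ may or may not be active, but we use the representation $X^\pi_t = X^\pi_0 + \mu t - \int_0^t \ell_s\,\dd s + \sigma B_t + G_t$ versus $X^{\tilde\pi}_t = X^{\tilde\pi}_0 + \mu t - \int_0^t \tilde\ell_s\,\dd s + \sigma B_t + \underline{G}^{\tilde\ell}_t$. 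Subtracting, the Brownian and drift-from-$\mu$ terms cancel, leaving
\[
X^{\tilde\pi}_t - X^\pi_t = -\int_0^t (\tilde\ell_s - \ell_s)\,\dd s + \underline{G}^{\tilde\ell}_t - G_t .
\]
Since $\tilde\ell \ge \ell$, the integral term is $\le 0$; the obstacle is to control the difference of the two push-up terms. Here I would invoke the minimality property of the Skorokhod reflection map: $\underline{G}^{\tilde\ell}_t = -\min\{X^{(\tilde\ell,0)}_s \wedge 0 : s \le t\}$ is the \emph{smallest} nondecreasing process keeping $X^{\tilde\pi}$ nonnegative, and a standard comparison shows that whenever $X^{\tilde\pi}$ is strictly positive, $\underline{G}^{\tilde\ell}$ does not increase, so at any time $X^{\tilde\pi}_t > 0$ we are free to compare, and at times $X^{\tilde\pi}_t = 0$ we trivially have $X^{\tilde\pi}_t = 0 \le X^\pi_t$ (the latter being nonnegative on $[0,\tau_0^\pi]$, and more generally on $[0,T^\pi]$). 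This is the key step and the main obstacle: one must argue carefully, e.g. by considering the last time before $t$ that $X^{\tilde\pi}$ touched zero, that $X^{\tilde\pi}_t \le X^\pi_t$ holds for all $t \le T^\pi$. A clean way is: suppose for contradiction $X^{\tilde\pi}_{t_0} > X^\pi_{t_0} \ge 0$ for some $t_0 \le T^\pi$; let $s_0 = \sup\{s \le t_0 : X^{\tilde\pi}_s \le X^\pi_s\}$, which is well-defined since equality holds at $0$; on $(s_0, t_0]$ we have $X^{\tilde\pi} > X^\pi \ge 0$, so $X^{\tilde\pi} > 0$ there, hence $\underline{G}^{\tilde\ell}$ is constant on $(s_0,t_0]$, while $G$ is nondecreasing; combined with $\tilde\ell \ge \ell$ this forces $X^{\tilde\pi}_{t_0} - X^{\tilde\pi}_{s_0} \le X^\pi_{t_0} - X^\pi_{s_0}$, and since $X^{\tilde\pi}_{s_0} = X^\pi_{s_0}$ (or $\le$, by right-continuity) we reach $X^{\tilde\pi}_{t_0} \le X^\pi_{t_0}$, a contradiction.

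Next I would deduce $T^{\tilde\pi} = T^\pi$. Since $X^{\tilde\pi}_t \le X^\pi_t$ on $[0,T^\pi]$ and $X^\pi$ first goes below $0$ at $T^\pi$, certainly $X^{\tilde\pi}$ cannot have gone below $0$ strictly before $T^\pi$ without $\underline{G}^{\tilde\ell}_{\cdot \wedge T^\pi}$ having pushed it back — but by construction $\tilde G = \underline{G}^{\tilde\ell}_{\cdot \wedge T^\pi}$ keeps $X^{\tilde\pi}$ nonnegative exactly on $[0,T^\pi]$, so $X^{\tilde\pi}_t \ge 0$ for $t < T^\pi$ and $T^{\tilde\pi} \ge T^\pi$; conversely, at $T^\pi$ the reflection term is frozen ($\tilde G_t = \underline{G}^{\tilde\ell}_{T^\pi}$ for $t \ge T^\pi$), and an instant after $T^\pi$ we have $X^{\tilde\pi} = X^{\tilde\pi}_{T^\pi} + (\text{same increments as } X^{(\tilde\ell,0)})$; since $X^{\tilde\pi}_{T^\pi} \le X^\pi_{T^\pi} = 0$ (using $X^\pi$ is right-continuous and below $0$ just after $T^\pi$, or $X^\pi_{T^\pi}=0$ with the path going strictly negative), and $\tilde\ell \ge \ell$, one gets $X^{\tilde\pi}$ strictly negative immediately after $T^\pi$ as well, so $T^{\tilde\pi} \le T^\pi$. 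Hence equality. (I would phrase this via Lemma~\ref{lemma.Skorohod}/the Skorokhod construction to be rigorous about $T^\pi$ versus $\tau_0$.)

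Finally, for the ``in particular'' clause, assume $\tilde\ell = \ell$. Then the displayed identity reduces to $X^{\tilde\pi}_t - X^\pi_t = \tilde G_t - G_t$, so $X^{\tilde\pi}_t \le X^\pi_t$ is equivalent to $\tilde G_t \le G_t$ for all $t \le T^\pi$, which we have just proved. The inequality $\int_0^{T^{\tilde\pi}} \mathrm{e}^{-qt}\,\dd\tilde G_t \le \int_0^{T^\pi}\mathrm{e}^{-qt}\,\dd G_t$ then follows by integration by parts: for a nondecreasing right-continuous $H$ with $H_{0-}=0$,
\[
\int_0^{T} \mathrm{e}^{-qt}\,\dd H_t = \mathrm{e}^{-qT} H_T + q\int_0^{T} \mathrm{e}^{-qt} H_t\,\dd t ,
\]
and applying this to both $\tilde G$ (with $T = T^{\tilde\pi} = T^\pi$) and $G$ and using $\tilde G_t \le G_t$ pointwise on $[0,T^\pi]$ gives the result termwise. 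I expect steps of this last paragraph to be entirely routine; the genuine content is the pathwise reflection comparison in the first paragraph.
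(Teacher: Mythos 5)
Your proof is correct, and while it rests on the same two structural facts as the paper's — the decomposition $Y_t := X^{\tilde\pi}_t - X^\pi_t = -\int_0^t(\tilde\ell_s-\ell_s)\,\dd s + \tilde G_t - G_t$ and the observation that $\dd\tilde G$ is supported on $\{X^{\tilde\pi}=0\}\subseteq\{Y\leq 0\}$ on $[0,T^\pi]$ — you close the comparison differently. The paper applies the change-of-variables formula for finite-variation processes to $\tfrac12\max(0,Y_t)^2$, absorbing the jumps of $G$ into a sum of nonpositive terms, and concludes $\max(0,Y_t)^2\leq 0$. You instead run a deterministic last-crossing-time contradiction: if $Y_{t_0}>0$, set $s_0=\sup\{s\leq t_0: Y_s\leq 0\}$, note $Y_{s_0}\leq 0$ and that neither $\dd\tilde G$ (since $X^{\tilde\pi}>0$ there) nor the drift difference can push $Y$ up on $(s_0,t_0]$ while $G$ only pushes it down. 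This is more elementary — no stochastic calculus at all for the key step — at the cost of one point you should state precisely: the claim $Y_{s_0}\leq 0$ when the supremum is not attained does not follow from right-continuity alone (as your parenthetical suggests) but from $Y_{s_0-}\leq 0$ together with the fact that all jumps of $Y$ are nonpositive ($\tilde G$ is continuous and $G$ jumps only upward, so $\Delta Y_{s_0}=-\Delta G_{s_0}\leq 0$); this is exactly the same structural fact the paper's sum term exploits. Your treatment of $T^{\tilde\pi}=T^\pi$ is if anything more careful than the paper's terse "hence $T^{\tilde\pi}\leq T^\pi$" (you correctly note one needs the frozen reflection and the a.s.\ immediate downcrossing of Brownian motion after $T^\pi$), and the ``in particular'' clause via integration by parts matches the paper verbatim.
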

\begin{proof}
Set $Y_t =  X^{\tilde \pi}_t - X^\pi_t$, which is such that $\dd Y_t  = ( \ell_t - \tilde \ell_t) \dd t - \dd G_t +  \dd \tilde G_t$. Note that, for $t\leq T^\pi$, if  $Y_t > 0$ then $X^{\tilde \pi}_t > 0$. As a result, $\mathbf{1}_{\{Y_t > 0\}} \dd \tilde G_t = 0$. Since $Y$ has paths of finite variation and $\tilde G$ has continuous paths, we can use the Change of Variables formula (see Theorem II.31 in \cite{P2005}) and write, for $t<T^\pi$,
\begin{eqnarray*}
 \tfrac12 \max(0,Y_t)^2 &=& \int_0^t \max(0,Y_s)\left(( \ell_s - \tilde \ell_s) \dd s - \dd G_s + \dd \tilde G_s \right) \\ & & \qquad \qquad  + \tfrac12  \sum_{s \leq t} (\max(0,Y_{s-} - (G_s-G_{s-}))^2 - \max(0,Y_{s-})^2)  \\
   &\leq & \int_0^t Y_s \mathbf{1}_{\{Y_s > 0\}} \left(- \dd G_s + \dd \tilde G_s \right) = - \int_0^t Y_s \mathbf{1}_{\{Y_s > 0\}} \dd G_s \leq 0 ,
\end{eqnarray*}
where we used that $G_t \geq G_{t-}$ for all $t\geq 0$. Consequently, we have $Y_t \leq 0$, which means that $X^{\tilde \pi}_t \leq X^{\pi}_t$ for all $t \leq T^\pi$, and hence $T^{\tilde \pi} \leq T^\pi$. But, since by definition $\tilde G_t = \underline{G}^{\tilde \ell}_t$ for all $t\leq T^\pi$, which implies that $X^{\tilde \pi}_t \geq 0 $ for all $t\leq T^\pi$, we also have that $T^{\tilde \pi} \geq T^\pi$.

Now, if $\tilde \ell = \ell$, then it is clear that $X^{\tilde \pi}_t \leq X^{\pi}_t$ implies $\tilde G_t \leq G_t$ for $t\leq T^\pi$. Consequently, by integration by parts, we have
\[
\int_0^{T^\pi} \mathrm{e}^{-qt} \mathrm{d}(\tilde G_t - G_t) = \mathrm{e}^{-q T^\pi} (\tilde G_{T^\pi} - G_{T^\pi}) + q \int_0^{T^\pi} \mathrm{e}^{-qt} (\tilde G_t - G_t) \mathrm{d}t \leq 0 .
\]
\end{proof}

\begin{lemma} \label{lemma.Skorohod}
For $b>0$ and $x \geq 0$, there exist an adapted nonnegative process $\underline{X}^b$ such that
\[
 \dd  \underline{X}^b_t = \left(\mu - F(\underline{X}^b_t) \mathbf{1}_{\{\underline{X}^b_t \geq b \}} \right)  \dd t + \sigma  \dd  B_t +  \dd  G_t, \quad \underline{X}^b_0=x,
\]
and an adapted nondecreasing process $G$ such that
\[
G_t = \int_0^t \mathbf{1}_{\{\underline{X}^b_s = 0 \}}  \dd  G_s .
\]
\end{lemma}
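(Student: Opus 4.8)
The plan is to construct the pair $(\underline{X}^b, G)$ by a Picard-type iteration combined with the classical deterministic Skorokhod map, exploiting the Lipschitz continuity of the drift coefficient away from the reflection boundary. First I would fix a Brownian path and recall the deterministic Skorokhod problem on $[0,\infty)$: given a continuous function $y$ with $y(0)\geq 0$, there is a unique pair $(z,k)$ with $z$ continuous nonnegative, $k$ continuous nondecreasing with $k(0)=0$, $z = y + k$, and $\int_0^t \mathbf{1}_{\{z_s > 0\}}\,\mathrm{d}k_s = 0$; moreover the map $y \mapsto (z,k)$ is Lipschitz in the supremum norm on compact time intervals (see, e.g., Lemma 3.6.14 in \cite{karatzas-shreve_1991}). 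Writing $b_F(x) := -F(x)\mathbf{1}_{\{x \geq b\}}$, note that $x \mapsto \mu + b_F(x)$ is bounded (since $F$ is continuous and the indicator caps it) and is Lipschitz on $[0,b)$ and on $[b,\infty)$; the only possible discontinuity is a downward jump at $x=b$, where $\mu + b_F$ jumps from $\mu$ down to $\mu - F(b)$. A monotone (downward) jump of the drift is exactly the situation handled by the results on SDEs with discontinuous but one-sided-Lipschitz-type coefficients, e.g.\ \cite{LS2016} as already invoked for the non-reflected equation $X^b$ in the main text.

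Concretely, I would set up the iteration $\underline{X}^{b,(0)}_t \equiv x$ and, given $\underline{X}^{b,(n)}$, define $y^{(n)}_t = x + \int_0^t (\mu + b_F(\underline{X}^{b,(n)}_s))\,\mathrm{d}s + \sigma B_t$, then let $(\underline{X}^{b,(n+1)}, G^{(n+1)})$ be the image of $y^{(n)}$ under the Skorokhod map. Each $\underline{X}^{b,(n)}$ is adapted and continuous, and each $G^{(n)}$ is adapted, continuous, nondecreasing, and increases only on $\{\underline{X}^{b,(n)} = 0\}$. The Lipschitz property of the Skorokhod map reduces convergence of the scheme to estimating $\sup_{s\leq t}|\,b_F(\underline{X}^{b,(n)}_s) - b_F(\underline{X}^{b,(n-1)}_s)\,|$; on a Grönwall argument this would close if $b_F$ were globally Lipschitz. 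To handle the jump at $b$, I would instead argue as in \cite{LS2016}: approximate $b_F$ from below by an increasing sequence of Lipschitz functions $b_F^{(k)} \uparrow b_F$, obtain reflected solutions $(\underline{X}^{b,k}, G^k)$ for each $k$ by the above fixed-point argument (now valid since $b_F^{(k)}$ is Lipschitz), and then pass to the limit $k \to \infty$. Monotonicity of $b_F^{(k)}$ in $k$ together with a comparison principle for reflected SDEs (Lemma~\ref{lemma.1} gives the relevant monotonicity of the solution map in the drift, after a reflected analogue) yields monotone convergence of $\underline{X}^{b,k}$; the limit $\underline{X}^b$ is continuous and nonnegative, $G^k$ converges to a nondecreasing limit $G$, and one checks the reflection condition $\int_0^t \mathbf{1}_{\{\underline{X}^b_s = 0\}}\,\mathrm{d}G_s = G_t$ is preserved in the limit. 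Adaptedness is inherited at every stage.

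The main obstacle, as is typical for such statements, is precisely the downward jump of the drift at the threshold $b$: the coefficient is neither continuous nor globally Lipschitz, so neither the naive Picard iteration nor a direct Yamada--Watanabe argument applies off the shelf. The saving feature is that the jump is in the favorable (monotone decreasing) direction, which makes the problem amenable to the comparison/approximation approach of \cite{LS2016}; verifying that the reflection constraint and the defining identity for $G$ survive the two successive limit passages (the Picard limit for fixed $k$, then the $k\to\infty$ limit) is the part that requires care, though each individual step is standard. Pathwise uniqueness is not asserted in the statement, so I would not need to establish it here, but it would follow along the same lines. I would relegate the detailed Grönwall estimates and the limit-passage bookkeeping to a remark, since they parallel \cite{LS2016} and \cite{karatzas-shreve_1991} closely.
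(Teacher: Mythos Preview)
Your approach is genuinely different from the paper's and considerably more involved. The paper exploits the single structural feature you do not use: since $b>0$, the reflection boundary at $0$ and the drift discontinuity at $b$ are spatially separated, so the two difficulties never occur at the same time. The authors therefore build $(\underline{X}^b,G)$ by pasting together excursion-like pieces along an increasing sequence of stopping times $T_0<T_1<\cdots$: on intervals where the process is below $b$ (odd $n$), the drift is simply the constant $\mu$ and one applies the classical Skorokhod reflection of $X_t = x+\mu t+\sigma B_t$; on intervals where the process is above $0$ (even $n$), one uses the already-established unreflected solution $X^b$ from \cite{LS2016}. No approximation, no fixed point, no limit passage is needed.

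Your route via Lipschitz approximation of the drift plus a comparison/monotone-limit argument is plausible in outline but has loose ends. First, the claim that $b_F$ is bounded is false: $F$ is only concave and nondecreasing (e.g.\ $F(x)=Kx$), so $b_F$ has linear growth; this is not fatal but requires localization in your Picard step. Second, Lemma~\ref{lemma.1} as stated compares processes with the \emph{same} dividend rate and different injection policies, not different drifts in a reflected SDE; you would need a genuine comparison theorem for reflected diffusions, which exists but is not what you cite. Third, the passage to the limit in the identity $G_t=\int_0^t\mathbf{1}_{\{\underline{X}^b_s=0\}}\,\dd G_s$ and in the drift integral (where one must argue that the limit process spends zero Lebesgue time at level $b$) is exactly the delicate part, and ``standard'' is doing a lot of work there. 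The paper's construction sidesteps all of this.
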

\begin{proof}
Set $H(y):=\mu - F(y) \mathbf{1}_{\{y \geq b \}}$. Recall that $F:[0,\infty) \to [0,\infty)$ is a continuously differentiable, nondecreasing and concave function, therefore a Lipschitz function. Recall also that $X_t = x + \mu t + \sigma B_t$.

Set $T_0 = 0$ and let $X^0$ be the solution to the following SDE on $[0,\infty)$:
$$
\dd X^0_t = H(X^0_t) \dd t + \sigma \dd B_t, \quad X^0_{0} = x.
$$
This process is indeed the same as the controlled process associated with a mean-reverting strategy at level $b$, but we use a different notation only for the purpose of this proof.

Set $G^0 \equiv 0$ and $T_1 = \inf\{t\geq T_0 : X^0_t = b\}$. By induction, for $n\geq 1$:
\begin{itemize}
\item if $n$ is odd: for $t \geq T_n$, define $G^n_t = - \min_{T_n \leq s \leq t} ((X_s - X_{T_n}) \wedge 0)$ and $X^n_t = X_t - X_{T_n} + G^n_t$. Set $T_{n+1} = \inf\{t\geq T_n : X^n_t = b\}$.
\item if $n$ is even: for $t \geq T_n$, define $X^n$ as the solution of the following SDE:
$$
\dd X^n_t = H(X^n_t) \dd t + \sigma \dd B_t, \quad X^n_{T_n} = b .
$$
Also, set $G^n \equiv 0$ and $T_{n+1} = \inf\{t\geq T_n : X^n_t = 0\}$.
\end{itemize}
Note that, for $n \geq 1$ odd, the pair $(X^n,G^n)$ is the solution to the following Skorohod problem on $[T_n,T_{n+1})$:
\begin{align*}
\dd X^n_t &= \mu \dd t + \sigma \dd B_t + \dd G^n_t, \quad X^n_{T_n} = 0,\\
G^n_t &= \int_{T_n}^t \mathbf{1}_{\{X^n_s = 0\}} \dd G^n_s .
\end{align*}

Finally, define
\[
\underline{X}^b_t = \sum_{n \geq 0} X^n_t \mathbf{1}_{\{T_n \leq t < T_{n+1}\}} \quad \text{and} \quad G_t = \sum_{n \geq 0} G^n_t \mathbf{1}_{\{t \leq T_{n+1}\}} .
\]
The pair $(\underline{X}^b,G)$ satisfies the conditions in the statement of the lemma. The details are left to the reader.
\end{proof}

\section{Proof of Proposition~\ref{prop.C1}}\label{preuve-prop.C1}

\bigskip

Since $V_c$ is a nondecreasing, continuous and concave function, we know that its left and right derivatives $V_c^{',-}, V_c^{',+}$ exist.
\begin{itemize}
\item[(a)] Assume there is $x_0>0$ and $d > 0$ such that $V_c^{',-}(x_0)< d < V_c^{',+}(x_0).$ For $M \in \mathbb{R}$, define $\phi_M(x) = V_c(x_0) + d (x-x_0) + \tfrac{1}{2} M (x-x_0)^2.$ Then $x_0$ is a local minimum of $V_c - \phi_M$ when $M>0$. By the supersolution property of $V_c$, we must have that $$ q \phi_M(x_0) - \sup_{0 \leq l \leq F(x_0)} \mathcal L_l[\phi_M](x_0) = q \phi_M(x_0) - \sup_{0\leq l \leq F(x_0)} \left[(\mu-l) d - l \right] - \tfrac12 \sigma^2 M \geq 0.$$ Taking $M>0$ large enough yields a contradiction.

\item[(b)] Assume there is $x_0>0$, $d > 0$ such that $V_c^{',+}(x_0)< d < V_c^{',-}(x_0).$  Then $x_0$ is a local maximum of $V_c - \phi_M$, when $M < 0.$ By the subsolution property of $V_c$, we must have that
$$ q \phi_M(x_0) - \sup_{0 \leq l \leq F(x_0)} \mathcal L_l[\phi_M](x_0) =  q \phi_M(x_0) - \sup_{0 \leq l \leq F(x_0)} \left[ (\mu-l) d - l \right] - \tfrac12 \sigma^2 M \leq 0.$$ Taking $M \to - \infty$ leads to a contradiction.
\end{itemize}
Since, by Proposition~\ref{prop:Vc-derivative-at-zero}, we already know that $V'_c(0) = \beta$, this concludes the proof.

\section{A first-passage identity} \label{AppendixC}

Here is a solution to a first-passage problem for the refracted diffusion process $X^b$. We use the notation introduced in Section~\ref{sec:notation}.

\begin{lemma}\label{Lemma:fluctuation-identity}
For a fixed $b \geq 0$, we have
\[
\e_x \left[ \mathrm{e}^{-q \tau_0^{X^b}} \right] =
\begin{cases}
\Zq (x) + \Wq(x) \left[ \frac{\Zq (b) \Hqprime (b) - \Zqprime (b) \Hq (b)}{\Wqprime (b) \Hq (b) - \Wq (b) \Hqprime (b)} \right] & \text{if $0 \leq x \leq b$,}\\
\Hq(x) \left[ \frac{\Zq (b) \Wqprime (b) - \Zqprime (b) \Wq (b)}{\Wqprime (b) \Hq (b) - \Wq (b) \Hqprime (b)} \right] & \text{if $x \geq b$.}
\end{cases}
\]
\end{lemma}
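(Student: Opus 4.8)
The plan is to write $v(x) := \e_x\!\left[\mathrm{e}^{-q\tau_0^{X^b}}\right]$ and to determine $v$ separately on $[b,\infty)$ and on $[0,b]$ by conditioning on the first visit to the refraction level $b$, each step producing a formula containing the single unknown $v(b)$, which is then pinned down by a smooth-fit ($C^1$) condition at $b$. For $x\ge b$: by path continuity $X^b$ must reach $b$ before $0$, and up to $\tau_b^{X^b}$ the process $X^b$ is indistinguishable in law from $X^0$ (they solve the same SDE above $b$); so the strong Markov property gives $v(x)=\e_x\!\left[\mathrm{e}^{-q\tau_b^{X^0}}\right]v(b)=\tfrac{\Hq(x)}{\Hq(b)}\,v(b)$, where the identity $\e_x[\mathrm{e}^{-q\tau_b^{X^0}}]=\Hq(x)/\Hq(b)$ recalled in Section~\ref{sec:notation} extends to $x\ge b$ because $\Hq$ is, by construction, the bounded decreasing solution of~\eqref{eq:ODE_F}. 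For $0\le x\le b$: here $X^b$ coincides in law with $X$ up to $\tau_0^X\wedge\tau_b^X$, so splitting over which of the two levels is hit first and applying the strong Markov property at $\tau_b^X$,
\[
v(x)=\e_x\!\left[\mathrm{e}^{-q\tau_0^X}\ind_{\{\tau_0^X<\tau_b^X\}}\right]+\e_x\!\left[\mathrm{e}^{-q\tau_b^X}\ind_{\{\tau_b^X<\tau_0^X\}}\right]v(b).
\]
The second expectation is $\Wq(x)/\Wq(b)$ by Section~\ref{sec:notation}, and the first is the two-sided exit transform $\Zq(x)-\tfrac{\Wq(x)}{\Wq(b)}\Zq(b)$, recognized as the unique solution of~\eqref{eq:ODE_hom} on $(0,b)$ equal to $1$ at $0$ and $0$ at $b$ (cf.~\cite{APP2007}). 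This yields $v(x)=\Zq(x)-\tfrac{\Wq(x)}{\Wq(b)}\Zq(b)+\tfrac{\Wq(x)}{\Wq(b)}v(b)$ on $[0,b]$.

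It remains to evaluate $v(b)$. Since the diffusion coefficient $\sigma$ is constant, $v\in C^1(0,\infty)$, so the two regional expressions must have matching first derivatives at $x=b$; equating $v'(b-)$ and $v'(b+)$ gives a linear equation solved by $v(b)=\Hq(b)\,\dfrac{\Zq(b)\Wqprime(b)-\Zqprime(b)\Wq(b)}{\Wqprime(b)\Hq(b)-\Wq(b)\Hqprime(b)}$, the denominator being strictly positive since $\Wqprime,\Hq>0$, $\Wq\ge 0$ and $\Hqprime<0$. Substituting this value back into the $x\ge b$ formula reproduces the second line of the claimed identity verbatim, while substituting into the $0\le x\le b$ formula and simplifying---the key algebraic cancellation being $\Zq(b)\Wqprime(b)\Hq(b)-\Zqprime(b)\Wq(b)\Hq(b)-\Zq(b)\big(\Wqprime(b)\Hq(b)-\Wq(b)\Hqprime(b)\big)=\Wq(b)\big(\Zq(b)\Hqprime(b)-\Zqprime(b)\Hq(b)\big)$---reproduces the first line.

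The one genuinely delicate point is the $C^1$ regularity of $v$ across the refraction level $b$, which the smooth-fit step relies on. Rather than invoke general regularity theory, I would close the argument by verification. Let $\tilde v$ be the function defined by the right-hand side of the claimed identity. By the choice of the constants, $\tilde v$ is continuous with $\tilde v(b-)=\tilde v(b+)$ and $\tilde v'(b-)=\tilde v'(b+)$, hence $\tilde v\in C^1[0,\infty)$; moreover $\tilde v(0)=\Zq(0)=1$, $\tilde v$ is bounded (it is a constant multiple of the decreasing function $\Hq$ on $[b,\infty)$), $\tilde v$ is $C^2$ on $(0,b)$ and on $(b,\infty)$, and it solves the generator equation of $X^b$ off $\{b\}$ because $\Wq,\Zq$ solve~\eqref{eq:ODE_hom} and $\Hq$ solves~\eqref{eq:ODE_F}. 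Applying It\^o's formula to $\mathrm{e}^{-qt}\tilde v(X^b_{t\wedge\tau_0^{X^b}})$---no local-time term arises at $b$ since $\tilde v'$ is continuous there---shows this process is a bounded martingale; letting $t\to\infty$ and using bounded convergence together with $\tilde v(0)=1$ and $\mathrm{e}^{-qt}\tilde v(X^b_t)\to 0$ on $\{\tau_0^{X^b}=\infty\}$ gives $\tilde v(x)=\e_x[\mathrm{e}^{-q\tau_0^{X^b}}]=v(x)$. I expect this verification step, and in particular checking that the prescribed constants really make $\tilde v$ of class $C^1$ at $b$, to be the main technical work; the probabilistic decomposition above serves mainly to produce and cross-check the candidate.
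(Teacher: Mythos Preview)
Your proof is correct. The Markovian decomposition on $[0,b]$ and on $[b,\infty)$ is identical to the paper's, reducing everything to the single unknown $v(b)$. The genuine difference lies in how $v(b)$ is determined. The paper introduces an approximating sequence of processes $X^{b,n}$ that alternate between the dynamics of $X$ and of $X^0$ when crossing the levels $b-1/n$ and $b$, writes $h_n(b-1/n)$ using only two-sided exit identities for $X$ and one-sided identities for $X^0$, and passes to the limit $n\to\infty$; this is purely probabilistic and sidesteps any regularity question for $v$. You instead impose the smooth-fit condition $v'(b-)=v'(b+)$ to obtain a candidate, and then justify it a posteriori by a verification/martingale argument applied to the explicit $\tilde v$. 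Your route is arguably cleaner once one accepts the extended It\^o formula for $C^1$, piecewise-$C^2$ functions (which is exactly what your ``no local-time term'' remark needs); the paper's route trades that analytic input for the approximation construction. Both approaches are standard in fluctuation theory for refracted diffusions and lead to the same value of $v(b)$.
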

\begin{proof}
Define $h(x) = \e_x \left[ \mathrm{e}^{-q \tau_0^{X^b}} \right]$. For $0 \leq x \leq b$, the process $\left\lbrace X^b_t , 0 \leq t \leq \tau_0^{X^b} \wedge \tau_b^{X^b} \right\rbrace$ has the same distribution (with respect to $\p_x$) as $\left\lbrace X_t , 0 \leq t \leq \tau_0^X \wedge \tau_b^X \right\rbrace$. Consequently, using the strong Markov property and the almost-sure continuity of trajectories, we can write
\begin{align*}
h(x) &= \e_x \left[ \mathrm{e}^{-q \tau_0^{X^b}} \ind_{\{\tau_0^{X^b}<\tau_b^{X^b}\}} \right] + \e_x \left[ \mathrm{e}^{-q \tau_b^{X^b}} \ind_{\{\tau_b^{X^b}<\tau_0^{X^b}\}} \right] h(b) \\
&= \e_x \left[ \mathrm{e}^{-q \tau_0^X} \ind_{\{\tau_0^X<\tau_b^X\}} \right] + \e_x \left[ \mathrm{e}^{-q \tau_b^X} \ind_{\{\tau_b^X<\tau_0^X\}} \right] h(b) \\
&= \left( \Zq(x) - \frac{\Zq(b)}{\Wq(b)} \Wq(x) \right) + \frac{\Wq(x)}{\Wq(b)} h(b) ,
\end{align*}
where, in the last equality, we used two well-known fluctuation identities for a Brownian motion with drift; see, e.g., \cite{APP2007}.

For $x \geq b$, the process $\left\lbrace X^b_t , 0 \leq t \leq \tau_b^{X^b} \right\rbrace$ has the same distribution (with respect to $\p_x$) as $\left\lbrace X^0_t , 0 \leq t \leq \tau_b^{X^0} \right\rbrace$. Consequently, using the strong Markov property and the almost-sure continuity of trajectories, we can write
\[
h(x) = \e_x \left[ \mathrm{e}^{-q \tau_b^{X^0}} \right] h(b) = \frac{\Hq(x)}{\Hq(b)} h(b) .
\]
where, in the last equality, we used a known fluctuation identity for the process $X^0$; see, e.g., \cite{LR2023}.

To compute $h(b)$, we will use the following approximating sequence. This methodology has been used several times in the literature; see, e.g., \cite{LR2023}. For $n \geq 1$ and initial value $x<b$, let $X^{b,n}$ have the same dynamics as that of $X$, until it reaches level $b$. Then, its dynamics change to that of $X^0$, until it goes back to level $b-1/n$, in which case its dynamics goes back to that of $X$. This procedure is repeated until level $0$ is attained.

Define $h_n(x)=\e_x \left[ \mathrm{e}^{-q \tau_0^n} \right]$, where $\tau^n_0 = \inf \left\lbrace t > 0 \colon X^{b,n}_t = 0 \right\rbrace$. Using arguments as above (such as the Markov property and almost-sure continuity of trajectories), we can write
\[
h_n(b-1/n) = \e_{b-1/n} \left[ \mathrm{e}^{-q \tau_0^X} \ind_{\{\tau_0^X<\tau_b^X\}} \right] + \e_{b-1/n} \left[ \mathrm{e}^{-q \tau_b^X} \ind_{\{\tau_b^X<\tau_0^X\}} \right] h_n(b) ,
\]
where $h_n(b) = \e_b \left[ \mathrm{e}^{-q \tau_{b-1/n}^{X^0}} \right] h_n(b-1/n)$. Replacing, solving for $h_n(b-1/n)$ and then computing the expectations, we get
\[
h_n(b-1/n) = \frac{\Zq(b-1/n) - \frac{\Zq(b)}{\Wq(b)} \Wq(b-1/n)}{1 - \frac{\Wq(b-1/n)}{\Wq(b)} \frac{\Hq(b)}{\Hq(b-1/n)}} .
\]
Taking the limit, we further get
\[
h(b) = \frac{-\Zqprime(b) + \frac{\Zq(b)}{\Wq(b)} \Wqprime(b)}{\frac{\Wqprime(b)}{\Wq(b)} - \frac{\Hqprime(b)}{\Hq(b)}}
\]
and the result follows.
\end{proof}

\end{document}